\newcommand{\id}{\mathcal{I}}
\newcommand{\rr}{\mathbb{R}}
\newcommand{\cc}{\mathbb{C}}
\newcommand{\Q}{\mathcal{Q}}
\renewcommand{\Re}{\mathrm{Re}}
\newcommand{\sderiv}[1][]{\partial_{S#1}}
\newcommand{\boundOP}{\mathcal{B}}
\newcommand{\dom}{\operatorname{dom}}
\newcommand{\dist}{\operatorname{dist}}
\newtheorem{theorem}{Theorem}[section]
\newtheorem{lemma}[theorem]{Lemma}
\newtheorem{proposition}[theorem]{Proposition}
\theoremstyle{definition}
\newtheorem{assumption}[theorem]{Assumption}
\theoremstyle{definition}
\newtheorem{definition}[theorem]{Definition}
\newtheorem{example}[theorem]{Example}
\theoremstyle{remark}
\newtheorem{remark}[theorem]{Remark}
\crefname{enumi}{}{}
\title{\bf Universality property of the $S$-functional
calculus, noncommuting matrix variables and Clifford operators }
\author[F. Colombo]{Fabrizio Colombo}
\address{(FC) Politecnico di
Milano\\Dipartimento di Matematica\\Via E. Bonardi, 9\\20133 Milano,
Italy}
\email{fabrizio.colombo@polimi.it}
\author[J. Gantner]{Jonathan Gantner}
\address{(JG)
(was PhD student) Politecnico di Milano\\Dipartimento di Matematica\\Via E. Bonardi, 9\\20133
Milano, Italy
} \email{jonathan.gantner@gmx.at}
\author[D. P. Kimsey]{David P. Kimsey}
\address{(DPK) School of Mathematics, Statistics and Physics, Newcastle University, Newcastle upon Tyne NE1 7RU UK
} \email{david.kimsey@ncl.ac.uk}
\author[I. Sabadini]{Irene Sabadini}
\address{(IS) Politecnico di
Milano\\Dipartimento di Matematica\\Via E. Bonardi, 9\\20133 Milano\\Italy}
\email{irene.sabadini@polimi.it}
\begin{document}
\maketitle

\begin{abstract}
Spectral theory on the $S$-spectrum was born out of the need
to give quaternionic quantum mechanics a precise mathematical foundation (Birkhoff and von Neumann \cite{BF} showed that a general set theoretic formulation of quantum mechanics can be realized on real, complex or quaternionic Hilbert spaces).
Then it turned out that spectral theory on $S$-spectrum has important applications in several fields such as
fractional diffusion problems and, moreover,  it allows one to define several functional calculi
for $n$-tuples of noncommuting operators.
With this paper we show that the spectral theory on the $S$-spectrum is much more general and
it contains, just as particular cases,
the complex, the quaternionic and the Clifford settings.
In fact, the $S$-spectrum is well defined for
objects in an algebra that has a complex structure and for operators in general Banach modules. We show that the abstract formulation of the $S$-functional calculus
 goes beyond quaternionic and Clifford analysis, indeed the $S$-functional calculus has a certain {\em universality property}.
This fact makes the spectral theory on the $S$-spectrum
applicable to several fields of operator theory and allows one to define functions of
noncommuting matrix variables, and operator variables, as a particular case.
\end{abstract}

\medskip
\noindent AMS Classification  47A10, 47A60.

\noindent Keywords: Universality property,  abstract $S$-functional calculus, noncommuting matrix variables, $S$-spectrum,  Clifford operators.

\date{today}
\tableofcontents

\section{Introduction}

An important issue in operator theory is to
 define functions of $n$-tuples of operators $(T_1,...,T_n)$ and in order to do this several different strategies have been developed.
The spectral theorem for $n$-tuples of commuting normal operators on a Hilbert space, see \cite{Schmuedgen},
 and the Weyl functional calculus for self adjoint, not necessarily commuting operators,
  are among the most important tools.
 The use of the Cauchy formulas of  hyperholomorphic functions
 constitutes a powerful
  strategy to define functions of quaternionic operators and also of $n$-tuples of operators $(T_1,...,T_n)$ and these hyperholomorphic functional calculi have several applications in
  mathematics, physics and engineering.
  In  \cite{jefferies} one can find connections of the Weyl functional calculus and the Taylor functional calculus (see \cite{Taylor}) with the monogenic functional calculus. Additional connections among the aforementioned spectral theories can be found in the survey paper \cite{CGP}.
  The holomorphic functional calculus can be extended to unbounded operators and
for sectorial operators the $H^\infty$-functional calculus, introduced by A. McIntosh in \cite{McIntosh:1986}, is a very important extension, see also \cite{Haase}.

\medskip
There are two different extensions to higher dimensions of holomorphic functions of one complex variable obtained by the Fueter-Sce-Qian theorem. These extensions give two different notions of hyperholomorphic functions, see \cite{bookSCE},
 connected with quaternionic-valued functions
or more in general with Clifford algebra-valued functions.
The two extensions are called
slice hyperholomorphic functions and monogenic functions for the Clifford algebra setting.
Both classes of hyperholomorphic functions have a Cauchy formula that can be used to define
functions of $n$-tuples of operators that do not necessarily commute.

\medskip
Slice hyperholomorphic functions with values in a  Clifford algebra are also called
 slice monogenic functions. The main results of this function theory are contained in the book \cite{6css} and the references therein.
Monogenic functions, i.e.,
functions that are in the kernel of the Dirac operator,
see \cite{bds,BLU,Gurlebeck:2008}, are very well studied.

\medskip
The two spectral theories induced by hyperholomorphic functions are the spectral theory based on the $S$-spectrum and the monogenic spectral theory based on the monogenic spectrum.
Precisely,
the Cauchy formula of slice hyperholomorphic functions generates the $S$-functional calculus
 for $n$-tuples of not necessarily commuting operators, this calculus is based on the  notion of
$S$-spectrum.
On the other hand, the Cauchy formula, that monogenic functions generate, gives rise to the monogenic functional calculus which is based on the monogenic spectrum.

\medskip
When considering intrinsic functions, the $S$-functional calculus can be defined for one-sided quaternionic Banach spaces, see \cite{SpectralOP}.
In the paper \cite{SpectralOP} the author  has also developed the theory of spectral operators in quaternionic Banach spaces.
The two hyperholomorphic functional calculi coincide with the  Riesz-Dunford functional calculus
when they are applied to a single operator on a real or complex Banach space.

\medskip
It is a well known fact that in the complex setting the spectral theorem and the Riesz-Dunford functional calculus are both based on the same notion of spectrum and this is due to the fact there do not seem to be any other useful and meaningful notions of spectrum in this case.
If we move to the quaternionic setting we have two ways to generalize the eigenvalue problem,
considering left and right eigenvalues.
In both cases these spectra are not suitable for both the quaternionic holomorphic functional calculus
and for the quaternionic spectral theorem.
It was just in 2006 with the discovery of the $S$-spectrum and with the theory of slice hyperholomorphic functions over the quaternions that the symmetry became clear. The $S$-functional calculus and the quaternionic spectral theorem are both based on the notion of $S$-spectrum.

\medskip
The spectral theorem based on the notion of $S$-spectrum for quaternionic bounded and unbounded normal operators has been proved in 2015 and published in the paper \cite{ack} that
 appeared in 2016.
As in the complex case, the spectral theorem for unitary operators, see \cite{6SpectThmIrene}, can be deduced by the quaternionic version of Herglotz's theorem \cite{Herglotz}.
Perturbation of normal quaternionic operators are studied in \cite{6CCKS}.
Beyond the spectral theorem there is the theory of the characteristic operator functions that has been initiated in \cite{6COFBook}.

In the past there have been several attempts to generalize the spectral theory to a quaternionic Hilbert space, see \cite{Teichmueller,Viswanath},
 but without specifying the notion of quaternionic spectrum. The papers contain some interesting results in quaternionic operators theory.

 \medskip
 In 2020, the spectral theorem  for  Clifford normal operators based on the $S$-spectrum was proved in  \cite{CLIFSPECTAL}, but with some surprise this theorem holds true when we consider full Clifford operators and not only paravector operators.

 \medskip
 This fact has lead the authors to delve deeper into the $S$-functional calculus in order to restore the lost symmetry. Indeed, it turns out that we can extend the $S$-functional calculus to full Clifford operators and not only to paravector operators.
 This observation  has important consequences also on the function theory of slice monogenic functions
 because it opens the way to the definition of slice monogenic functions of a Clifford variable,
 see \cite{CKPS}.

\medskip
 The literature on hyperholomorphic function theories and related spectral theories is nowadays very large.
 For the function theory of slice hyperholomorphic functions, started with the quaternionic case in \cite{gs}, the
 main books are \cite{6EntireBook,6css,6GSBook,6GSSbook}, while
 for the spectral theory on the $S$-spectrum we mention the books \cite{6COFBook,6CG,6CKG,6css}.
The monogenic functional calculus can be found in the book
\cite{jefferies} and the references therein.

\medskip

Let us explain the generality of the spectral theory on the $S$-spectrum.
The point of view that we adopt in this paper is to work with a nontrivial, real, unital algebra $\mathcal F$ which is  finite-dimensional, associative, and is equipped with an anti-involution and a norm.

\begin{remark}
The framework in which we work is rather general since
the algebra $\mathcal{F}$ that we consider here is not necessarily of dimension $2^n$,
so it is not necessarily isomorphic to a Clifford algebra,
but the family of algebras that we consider does include
the algebra of complex numbers, quaternions and the  Clifford
algebras $\mathbb R_{0,n}$  with $n\ge 1$.
\end{remark}

We denote by $\mathbb{S}$ the set of left
imaginary units in $\mathcal{F}$, i.e.,
\begin{equation*}
         {\mathbb S}:=\left\{s\in \mathcal{F}\ :\ s\bar{s}\in \mathbb{R} \ {\rm and}\ \
s^2=-1_{\mathcal{F}}\right\},
\end{equation*}
where $1_{\mathcal{F}}$ denotes the unit in ${\mathcal{F}}$, later on denoted by $1$ for simplicity, and $\bar{\cdot}$ is the anti-involution.
When $\mathbb S\not=\emptyset$, we define the (left) weak cone of $\mathcal{F}$:
     \begin{equation*}
         \mathcal{W}_{\mathcal{F}}:=
         \bigcup_{\mathrm{J}\in\mathbb{S}}\mathbb{C}_\mathrm{J},
     \end{equation*}
where $\mathbb{C}_\mathrm{J}$ will be called complex plane associated with $\mathrm{J}\in\mathbb{S}$.
For $s\in \mathcal{W}_{\mathcal{F}}$, using the anti-involution, we define the real numbers
${\rm Re}(s):=\frac{1}{2}(s+\bar s)$  and $|s|:=\sqrt{s \bar s}$, see in the sequel for more details.
Now we consider a  two-sided
  Banach module, denoted by $\mathcal{Y}$,  over $\mathcal{F}$ with norm $\|\cdot\|_\mathcal{Y}$.
We denote by $\mathcal{B}(\mathcal{Y})$ the Banach module of all bounded linear operators from
$\mathcal{Y}$ into itself with the natural operator norm
and, mimicking the analogous definitions in \cite{6css}, we give the definition of $S$-resolvent set and of $S$-spectrum with greater generality than \cite{6css}. In \cite{GR} the authors considered these notions for analysis of semigroups.
Let $A\in \mathcal{B}(\mathcal{Y})$ and $s\in \mathcal{W}_{\mathcal{F}}$, we set
\[
\Q_{s}(A):=A^2 - 2\Re(s)A + |s|^2\id,
\]
where $\mathcal{I}$ is the identity operator in $\mathcal{B}(\mathcal{Y})$.
We define the $S$-resolvent set $\rho_S(A)$ of $A$ as
\[
\rho_S(A) := \{s\in \mathcal{W}_{\mathcal{F}} : \Q_{s}(A)  \text{ is invertible in $\mathcal{B}(\mathcal{Y})$}\}
\]
and the $S$-spectrum $\sigma_S(A)$ of $A$ as
\[
 \sigma_S(A) := \mathcal{W}_{\mathcal{F}}\setminus\rho_S(A).
\]
Then we define the {\em left $S$-resolvent operator} as
$$
 S_L^{-1}(s,A) = -\Q_{s}(A)^{-1}(A-\overline{s}\,\id), \ \ \ \ s\in \rho_S(A)
 $$
and the {\em right $S$-resolvent operator} as
$$
S_R^{-1}(s,A) = -(A-\overline{s}\id)\Q_{s}(A)^{-1}, \ \ \ \ s\in \rho_S(A),
$$
The operator $\Q_{s}(A)^{-1}$, for $s\in \rho_S(A)$, is called pseudo-resolvent operator.
Once that we have introduced the above concepts
 we can define the $S$-functional calculus with a great generality in such a way  that it contains as  particular cases: the complex case, the quaternionic case  and the case of paravector operators or full Clifford operators.
So we denote by $\mathcal{SH}_L(\sigma_{S}(A))$,
(resp. $\mathcal{SH}_R(\sigma_{S}(A))$)
for $A\in\boundOP(\mathcal{Y})$,
the set of all  left, (resp. right)   slice hyperholomorphic  functions on $U$,  where $U$ is a bounded Cauchy domain such that with $\sigma_{S}(A)\subset U$,
$\overline{U}\subset\dom(f)$ and $\dom(f)$ is the domain of the function $f$.
The abstract formulation of the $S$-functional calculus holds
for $A\in\boundOP(\mathcal{Y})$. For any imaginary unit $\mathrm{J}\in\mathbb{S}$ we set $ds_\mathrm{J}=ds(-\mathrm{J})$ and we define
\begin{equation*}
f(A) := \frac{1}{2\pi}\int_{\partial(U\cap\mathbb{C}_\mathrm{J})}S_L^{-1}(s,A)\,ds_\mathrm{J}\,f(s),
 \ \ {\rm for \ any} \ \  f\in\mathcal{SH}_L(\sigma_{S}(A))
\end{equation*}
and
\begin{equation*}
f(A) := \frac{1}{2\pi}\int_{\partial(U\cap\mathbb{C}_\mathrm{J})}f(s)\,ds_\mathrm{J}\,S_R^{-1}(s,A),
 \ \ {\rm for \ any} \ \  f\in\mathcal{SH}_R(\sigma_{S}(A)).
\end{equation*}
Moreover, standard techniques in slice hypercomplex analysis, see \cite{6CKG,6css}, show that both  integrals  depend neither on $U$ nor on the imaginary unit $\mathrm{J}\in\mathbb{S}$.
The arbitrariness  of the
 algebra $\mathcal{F}$ and of  the
Banach module $\mathcal{Y}$  over $\mathcal{F}$ is called the {\em universality property} of the $S$-functional calculus.

\medskip
The paper consists of six sections including the introduction.
In section 2 we consider vector-valued slice hyperholomorphic functions on the algebra $\mathcal{F}$
which are the hyperholomorphic functions on which the $S$-functional calculus is based.
In Section 3 we give the abstract formulation $S$-functional calculus
and in Section 4 we apply the abstract results to bounded full Clifford operators.
In Section 5 we show some properties of the $S$-functional calculus for bounded full Clifford operators and finally in Section 6 we consider the particular case of noncommuting matrix variables which can be of interest in free probability.

\section{Vector-valued slice hyperholomorphic functions}\label{UNIVERS}

In this section we define vector-valued slice hyperholomorphic functions with values in a module over an associative algebra satisfying suitable conditions. In the sequel, we use the notion of hyperholomorphicity to define the $S$-functional calculus for a large class of operators to which the definition of $S$-spectrum can be extended and for which the $S$-resolvent operators preserve the slice hyperholomorphicity.

Let $\mathcal{F}\neq\{0\}$ be a real, finite-dimensional, associative real algebra with unit, denoted by $1$. The multiples of the unit $1$ will be identified with real numbers by $\alpha \cdot 1\mapsto \alpha$, $\alpha\in\mathbb R$.
By fixing a basis $u_1=1,u_2,\dots , u_N$ of $\mathcal{F}$ as a real linear space we can write $a\in\mathcal{F}$ as $a=\sum_{\ell=1}^N a_\ell u_\ell$, $a_\ell\in\mathbb R$. We say that $a_1$ is the so-called scalar part of $a$ and, following the notation in use in the case of Clifford algebras, we denote it by $[a]_0$.\\
  We assume that $\mathcal{F}$ is equipped with an anti-involution $\bar{\cdot}$, i.e., $\bar{\cdot}$ is an involution in $\mathcal{F}$ with the property $\overline{ab}=\bar b\bar a$ for all $a,b\in\mathcal{F}$, that fixes $\mathbb{R}$.
\\ In the sequel, we need a notion of norm in $\mathcal{F}$, so we assume that the algebra $\mathcal{F}$ satisfies the condition $[a\bar a]_0\geq 0$, for $a\in \mathcal{F}$,  and the equality holds if and only if $a=0$. Then we define
\begin{equation}\label{norm}
\|a\|=([a\bar a]_0)^{1/2}
\end{equation}
and we assume that
$\|\cdot \|$ defines a norm in $\mathcal{F}$.
\medskip

We denote by $\mathbb{S}$ the set of
imaginary units in $\mathcal{F}$, i.e.,
\begin{equation}\label{eq-ca}
         {\mathbb S}:=\left\{s\in \mathcal{F}\ :\ s\bar{s}\in \mathbb{R} \ {\rm and}\ \
s^2=-1 \right\}.
\end{equation}
We shall assume that $\mathcal{F}$ is such that $\mathbb{S}\not=\emptyset$.
\\
 If we denote by $L_s:\
\mathcal{F}\rightarrow \mathcal{F}$ the multiplication on the left
by $s\in \mathcal{F}$, our assumptions imply that there exists $s\in \mathcal{F}$ such that $L_s$ is a complex structure on $\mathcal{F}$. So the algebra $\mathcal{F}$ is a LSCS algebra, in the terminology of \cite{dou}.
\begin{assumption}\label{ASS}
  In the sequel $\mathcal{F}$  denotes a nontrivial, finite-dimensional, associative  real algebra with unit, nonempty $\mathbb{S}$ and equipped with an anti-involution $\bar{\cdot}$ giving a norm $\|\cdot\|$ as in \eqref{norm}.
\end{assumption}

Following \cite{dou}, we now give the following definition:
\begin{definition} \label{weakcone}
We define the (left) weak cone of $\mathcal{F}$:
     \begin{equation}\label{cone}
         \mathcal{W}_{\mathcal{F}}:=
         \bigcup_{\mathrm{J}\in\mathbb{S}}\mathbb{C}_\mathrm{J},
     \end{equation}
where $\mathbb{C}_\mathrm{J}:=\{u+\mathrm{J}v, \ u,v\in \mathbb{R}\}$ will be called complex plane associated with $\mathrm{J}\in\mathbb{S}$.
\end{definition}
\begin{example}
In the case we consider $\mathcal{F}=\mathbb{H}$, we have $\mathcal{W}_{\mathcal{F}}=\mathbb{H}$.
For a Clifford algebra $\mathcal{F}=\mathbb{R}_n$ the set of paravectors
$\{s\in \mathbb{R}_n \  :  \ s=s_0+s_1e_1+...+s_ne_n\}$ is contained in
$\mathcal{W}_{\mathcal{F}}$ but $\mathcal{W}_{\mathcal{F}}$ can be larger, see \cite{GP}.
\end{example}

In \cite{GP} the authors define the notion of a quadratic cone in a real, finite-dimensional, alternative algebra with unit $1$ and equipped with an anti-involution. The imaginary units are a subset of this cone which turns out to satisfy \eqref{cone} for $\mathrm{J}\in{\mathbb S}$.
\begin{remark}
For any $\mathrm{J}\in\mathbb S$ we have, by definition, $\mathrm{J}^2=-1$ and so $\|\mathrm{J}^2\|=1$. The definition of norm implies
$\|\mathrm{J}^2\|=(\mathrm{J}\mathrm{J})\overline{(\mathrm{J}\mathrm{J})}
=\mathrm{J}(\mathrm{J}\bar{\mathrm{J}})\bar{\mathrm{J}}=\|\mathrm{J}\|^2=1$ thus $\|\mathrm{J}\|=1$, i.e. $\mathrm{J}\bar{\mathrm{J}}=1$. Multiplying by $\mathrm J$ both sides of this last equality we obtain $-\bar{\mathrm{J}}=\mathrm{J}$.
We deduce that the anti-involution $\bar{\cdot}$ induces the standard conjugation on each complex plane $\mathbb C_\mathrm{J}$, namely $\bar{\mathrm{J}}=-\mathrm{J}$ and
$$
\overline{\cdot}: \ \mathcal{W}_{\mathcal{F}}\to \mathcal{W}_{\mathcal{F}},
$$
which is defined by $s=u+\mathrm{J}v\mapsto \bar s =u-\mathrm{J}v$, for $u,v\in \mathbb{R}$.
Moreover,  for every $s\in\mathcal{W}_{\mathcal{F}}$, note that:
$$
s+\bar s \in \mathbb R, \qquad s \bar s=\bar s  s \in\mathbb R,
$$
and so we set
$$
{\rm Re}(s):=\frac{1}{2}(s+\bar s), \ \ \ {\rm and} \ \  |s|:=\sqrt{s \bar s}
$$
that we will call the real part and the modulus of $s$, respectively, in analogy with the complex case. Observe also that $|s|=\|s\|$.
\end{remark}
\begin{lemma}\label{squadequat}
For any $s\in \mathcal{W}_{\mathcal{F}}$ we have the identity
$$
s^2-2s {\rm Re}(s)+|s|^21=0.
$$
\end{lemma}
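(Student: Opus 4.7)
The plan is to reduce the identity to a direct computation inside a single complex plane $\mathbb{C}_{\mathrm{J}}$, exploiting the fact that the real-valued quantities $\mathrm{Re}(s)$ and $|s|^2$ behave exactly as in the classical complex case once a representation $s = u + \mathrm{J}v$ has been fixed.

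First, since $s\in\mathcal{W}_{\mathcal{F}}$, by definition of the weak cone \eqref{cone} there exists $\mathrm{J}\in\mathbb{S}$ and $u,v\in\mathbb{R}$ with $s=u+\mathrm{J}v$. From the preceding remark, the anti-involution restricted to $\mathbb{C}_{\mathrm{J}}$ is the standard conjugation $\bar{s}=u-\mathrm{J}v$, so that $\mathrm{Re}(s)=\tfrac{1}{2}(s+\bar s)=u$. Moreover $s\bar s = (u+\mathrm{J}v)(u-\mathrm{J}v)$, which by distributivity in $\mathcal{F}$ and using $\mathrm{J}^2=-1$ gives $s\bar s = u^2+v^2$, hence $|s|^2=u^2+v^2$.

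Next I would expand $s^2=(u+\mathrm{J}v)(u+\mathrm{J}v)$. Since the real multiples of $1$ commute with every element of $\mathcal{F}$, this equals $u^2+2u\mathrm{J}v+\mathrm{J}^2 v^2=u^2-v^2+2u\mathrm{J}v$. Similarly $2s\,\mathrm{Re}(s)=2u(u+\mathrm{J}v)=2u^2+2u\mathrm{J}v$. Substituting into the left-hand side of the claimed identity yields
\[
s^2-2s\,\mathrm{Re}(s)+|s|^2\cdot 1=(u^2-v^2+2u\mathrm{J}v)-(2u^2+2u\mathrm{J}v)+(u^2+v^2)=0.
\]

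There is essentially no obstacle: the only points requiring care are (i) the use of the remark to identify $\mathrm{Re}(s)$ and $|s|^2$ with their concrete expressions in terms of $u,v$, and (ii) the observation that real scalars commute with $\mathrm{J}$ inside $\mathcal{F}$, which is what allows the binomial-type expansion of $s^2$ to go through in the associative (but possibly noncommutative) algebra $\mathcal{F}$.
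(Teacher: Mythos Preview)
Your proof is correct and is essentially the same direct verification as the paper's; the only difference is that the paper keeps the computation coordinate-free by substituting $2\,\mathrm{Re}(s)=s+\bar s$ and $|s|^2=s\bar s$ directly, obtaining $s^2-s(s+\bar s)+s\bar s\cdot 1=0$ without ever fixing $u,v,\mathrm{J}$.
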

\begin{proof}
The proof is immediate, since any $s\in \mathcal{W}_{\mathcal{F}}$
 satisfies
$$
s^2-s(s+\bar s) +s\bar s 1 =0.
$$
\end{proof}

\begin{example}
The algebra of quaternions satisfies Assumption \ref{ASS}. If we consider the elements $i,j$ such that $i^2=j^2=-1$, $ij+ji=0$, the set $1,i,j, ij$ is a basis of $\mathbb H$ as a real linear space and the conjugation is $\bar i=-i$, $\bar j=-j$, $\overline{ij}=-ij$. The norm is such that $\|a\|=(a\bar a)^{1/2}$. Moreover $\mathbb H=\mathcal{W}_{\mathcal{F}}$.
\\
More generally, a Clifford algebra $\mathbb{R}_n := \mathbb{R}_{0,n}$, $n\geq 1$ satisfies Assumption \ref{ASS}, if we consider the Clifford conjugation.
\\
Finally we consider the real algebra $\mathbb{BC}$  of bicomplex numbers (which is not a real Clifford algebra). It is a commutative algebra generated by two imaginary units $i,j$ such that $i^2=j^2=-1$, $ij=ji$. Simple computations show that $\mathbb{S}=\{\pm i,\pm j\}$ and $\mathcal W_{\mathbb{BC}}=\mathbb{C}_i \cup \mathbb{C}_j$. The conjugation $\bar i=-i$, $\bar j=-j$ extends to the whole algebra and $\overline{ab}=\bar b\bar a$. Writing $a=x+yi+zj+tk$, where $k=ij=\bar k$, we have that $\|a\|=(x^2+y^2+z^2+t^2)^{1/2}$, defined according to \eqref{norm}, is the Euclidean norm.
\end{example}


\begin{definition}
Let  $x=u+\mathrm{J}_{x}v\in  \mathcal{W}_{\mathcal{F}}$,
 for $u,v\in \mathbb{R}$. We define the set
$$
[x]:=\{y\in \mathcal{W}_{\mathcal{F}}\ :\ y=u+\mathrm{J} v, \
 \mathrm{J}\in\mathbb{S}\},
$$
which will be called {\it the sphere associated with} $x$.
We say that $U\subseteq  \mathcal{W}_{\mathcal{F}}$ is {\it axially symmetric}
 if $[x]\in U$  for any $x\in U$.
\end{definition}

Since we need to consider functions defined on subsets of the weak cone $\mathcal{W}_{\mathcal{F}}$, our next goal is to define a topology on it and to this end we shall follow \cite{dou}.
\begin{definition}
The slice topology $\tau_s(\mathcal{W}_{\mathcal{F}})$ on $\mathcal{W}_{\mathcal{F}}$ is defined
by
$$
\tau_s(\mathcal{W}_{\mathcal{F}})=\{U\subset \mathcal{W}_{\mathcal{F}}\ :\ U\cap\mathbb C_{\mathrm{J}} \ \text{ is\ open\ in\ } \mathbb{C}_{\mathrm{J}}, \forall {\mathrm{J}}\in\mathbb S \}.
$$
\end{definition}
The weak cone can be equipped with the slice topology, see \cite{dou}. Among the open sets in $\mathcal{W}_{\mathcal{F}}$, i.e.  sets in $\tau_s(\mathcal{W}_{\mathcal{F}})$, there are the axially symmetric open sets. These sets are of the form
\begin{equation}\label{axsymm}
U=\{x\in \mathcal{W}_{\mathcal{F}}\ :\ x=u+\mathrm{J} v, \ (u,v)\ \in\mathcal{U}, \,
 \mathrm{J}\in\mathbb{S}\},
\end{equation}
where $\mathcal{U}\subseteq\mathbb R^2$ is an open set in $\mathbb R^2$. In the sequel, we shall consider only open sets which are axially symmetric, namely axially symmetric sets generated by open sets $\mathcal U$ in $\mathbb R^2$.

Our next goal is to define
vector-valued slice hyperholomorphic functions with values in a left (or right, or two-sided) Banach module over $\mathcal{F}$. We start by considering a special case in which the Banach module $\mathcal{X}$ is constructed starting from a Banach space $X$ over $\mathbb R$ equipped with a norm $\|\cdot\|_X$ via the the algebraic tensor product $X\otimes_{\mathbb R}\mathcal{F}$ of $X$ with $\mathcal{F}$.
The elements in ${\mathcal X}=X\otimes_{\mathbb R}\mathcal{F}$ are finite linear combinations of the form $\sum v_i\otimes u_i$, $v_i\in X$, where $\{u_1=1,\ldots, u_N\}$ is basis of $\mathcal{F}$. The left multiplication with $a\in\mathcal{F}$ is defined as $\sum v_i\otimes (a u_i)$ while the right multiplication by $a$ is defined by $\sum v_i\otimes ( u_ia)$. For simplicity, the symbol $\otimes$ will be omitted. The norm of $\sum v_i\otimes u_i$ in $X\otimes_{\mathbb R}\mathcal{F}$ is taken equal to
$(\sum \| v_i\|_X^2)^{1/2}$.

  We say that $\mathcal X=X\otimes_{\mathbb R}\mathcal{F}$  is a left (or right, or two-sided) module over $\mathcal{F}$ if there exist a left (or right, or two-sided) multiplication by elements of $\mathcal{F}$ and a constant $C\geq 1$ such that for all $x\in\mathcal{X}$, $a\in\mathcal{F}$ this inequality holds:
$$
\|ax\|_{\mathcal X}\leq C \|a\|_{\mathcal F}\|x\|_{\mathcal X} \qquad ({\rm or \ }\|xa\|_{\mathcal X}\leq C \|x\|_{\mathcal X} \|a\|_{\mathcal F}, {\rm \ or\ both }).
$$
  This case is useful when dealing with $n$-tuples of real linear operators.

\medskip More in general, we can consider a right (or left, or two-sided) module $\mathcal{Y}$ over $\mathcal F$.

We say that $\mathcal{Y}$ is a right (or left, or two-sided) Banach module
 over $\mathcal{F}$ if there exists a constant $C\geq 1$ such that
 $$
  \|ys\|_{\mathcal{Y}}\leq C \|y\|_{\mathcal{Y}} \|s\|_{\mathcal{F}}\ \ \
  ({\rm or}\ \ \ \ \|sy\|_{\mathcal{Y}}\leq  C\|y\|_{\mathcal{Y}}\|s\|_{\mathcal{F}},  \ \ \ {\rm or\ both})
  \ \ \ {\rm for\ all} \ \  y\in \mathcal{Y}, \ \ s\in \mathcal{F},
 $$
 and if $\mathcal Y$ is complete.
 \\
 For our purposes we will choose the norms in such a way that $C=1$.

Since we assumed $\mathbb{S}\not=\emptyset$, let $\mathrm{J}_1,\ldots ,\mathrm{J}_r$ be
a maximal set of linearly independent elements in $\mathbb{S}$ and let us consider a basis  $\{u_{\ell}\}$ of $\mathcal{F}$ such that $u_\ell=\mathrm{J}_\ell$, $\ell=1,\ldots, r$, with $r\leq N$.

We are now in s position to give the definition of vector-valued slice hyperholomorphic  functions in this more general case
which resembles the case of a quaternionic or a paravector variable.
The proofs of the main results mimic those cases closely.
We refer the reader to Section 2.3 in the book \cite{6CKG} for details.

\begin{definition}[Vector-valued slice hyperholomorphic  functions]\label{slice}

Let $\mathcal{F}$ be an algebra satisfying Assumption \ref{ASS} and denote by $\mathcal{W}_{\mathcal{F}}$ its weak cone.
 Let $U \subseteq \mathcal{W}_{\mathcal{F}}$  be an axially symmetric open set  as in \eqref{axsymm}
  where  $\mathcal{U}$ is open in $\rr^2$.

(I)
If $\mathcal{X}_L$ is a left Banach module over $\mathcal{F}$.
 A function $f:U\to \mathcal{X}_L$ is called a left-slice function, if it is of the form
 \[
 f(x) = f_{0}(u,v) + \mathrm{J}f_{1}(u,v)\qquad \text{for } x = u + \mathrm{J} v\in U
 \]
with two functions $f_{0},f_{1}: \mathcal{U} \to \mathcal{X}_L$ that satisfy the compatibility conditions
\begin{equation}\label{CCondmonVEC}
f_{0}(u,-v) = f_{0}(u,v),\qquad f_{1}(u,-v) = -f_{1}(u,v).
\end{equation}
If in addition $f_{0}$ and $f_{1}$  are of class $\mathcal C^1$ and satisfy the Cauchy-Riemann equations
 \begin{equation}\label{CRMMON}
 \begin{split}
&\frac{\partial}{\partial u} f_{0}(u,v) - \frac{\partial}{\partial v} f_{1}(u,v) = 0
\\
&
\frac{\partial}{\partial v} f_{0}(u,v)+ \frac{\partial}{\partial u} f_{1}(u,v) = 0,
\end{split}
\end{equation}
 then $f$ is called left slice hyperholomorphic.

(II)
If $\mathcal{X}_R$ is a right Banach module over $\mathcal{F}$.
Then a function $f:U\to \mathcal{X}_R$ is called a right-slice function if it is of the form
\[
f(x) = f_{0}(u,v) + f_{1}(u,v) \mathrm{J}\qquad \text{for } x = u+ \mathrm{J}v \in U
\]
with two functions $f_{0},f_{1}: \mathcal{U} \to \mathcal{X}_R$ that satisfy \eqref{CCondmonVEC}.
If in addition $f_{0}$ and $f_{1}$  are of class $\mathcal C^1$ and satisfy the Cauchy-Riemann equations (\ref{CRMMON}), then $f$ is called right slice hyperholomorphic.

(III)
If $f$ is a left (or right) slice function such that $f_{0}$ and $f_{1}$ are real-valued, then $f$ is called intrinsic.

(IV)
We denote the sets of left, right and intrinsic  slice hyperholomorphic functions on $U$ by $\mathcal{SH}_L(U)$, $\mathcal{SH}_R(U)$and $\mathcal{N}(U)$, respectively.
\end{definition}

\begin{theorem}[The Structure Formula or Representation Formula]\label{formulamon}
Let $\mathcal{F}$ be an algebra satisfying Assumption \ref{ASS} and denote by $\mathcal{W}_{\mathcal{F}}$ its weak cone.
 Let $U \subseteq \mathcal{W}_{\mathcal{F}}$  be an axially symmetric open set.

(I) Let $\mathcal{X}_L$ be a left Banach module over $\mathcal{F}$ and let $f:U\to \mathcal{X}_L$ be such that $f\in \mathcal{SH}_L(U)$.
Then, for any  vector
$x =u+\mathrm{J}_x v\in U$, the following formula holds:
\begin{equation}\label{repL}
f(x)=\frac{1}{2}\Big[1 -  \mathrm{J}\mathrm{J}_x \Big]f(u +\mathrm{J} v)+\frac{1}{2}\Big[1 +  \mathrm{J}\mathrm{J}_x \Big]f(u -\mathrm{J} v),\ \ \ {\it for \ all} \ \ \mathrm{J}\in {\mathbb S}.
\end{equation}

(II)
Let  $\mathcal{X}_R$ be a right Banach module over $\mathcal{F}$
and let $f:U\to \mathcal{X}_R$ be such that  $f\in \mathcal{SH}_R(U)$.
Then, for any  vector
$x =u+\mathrm{J}_x v\in U$, the following formula holds:
\begin{equation}\label{repR}
f(x)=\frac{1}{2}f(u +\mathrm{J} v)\Big[1 -  \mathrm{J}\mathrm{J}_x\Big]
+\frac{1}{2}f(u -\mathrm{J} v)\Big[1 +  \mathrm{J}\mathrm{J}_x\Big],\ \ \ {\it for \ all} \ \ \mathrm{J}\in{\mathbb S}.
\end{equation}
\end{theorem}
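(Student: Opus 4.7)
The proof is a purely algebraic verification, so the plan is to unpack both sides of the claimed identity using the definition of a slice function. I would prove part (I) directly and obtain part (II) by the obvious right/left symmetry.

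For (I), let $x = u + \mathrm{J}_x v \in U$ and fix an arbitrary $\mathrm{J} \in \mathbb{S}$. By axial symmetry of $U$, both $u + \mathrm{J}v$ and $u - \mathrm{J}v$ lie in $U$. The defining representation of a left slice function applied at these two points, together with the compatibility conditions \eqref{CCondmonVEC}, gives
\begin{align*}
f(u + \mathrm{J}v) &= f_0(u,v) + \mathrm{J}\, f_1(u,v),\\
f(u - \mathrm{J}v) &= f_0(u,v) - \mathrm{J}\, f_1(u,v).
\end{align*}

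Next I would invert this linear system in $\mathcal{X}_L$: adding the two equations isolates $f_0(u,v)$ as half of their sum, while subtracting them and then left-multiplying by $-\mathrm{J}$ (using $\mathrm{J}^2 = -1$) isolates $f_1(u,v)$. Substituting the resulting expressions for $f_0(u,v)$ and $f_1(u,v)$ into $f(x) = f_0(u,v) + \mathrm{J}_x f_1(u,v)$ and then collecting the coefficients in front of $f(u + \mathrm{J}v)$ and $f(u - \mathrm{J}v)$ yields \eqref{repL}. For part (II), the same computation applies with $\mathrm{J}$ and $\mathrm{J}_x$ acting on the right of $f_1(u,v)$; inverting via right-multiplication by $-\mathrm{J}$ places the coefficients $\tfrac{1}{2}(1 \pm \mathrm{J}\mathrm{J}_x)$ to the right of $f(u \pm \mathrm{J}v)$, as in \eqref{repR}.

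There is essentially no obstacle: the only algebraic facts used are the identity $\mathrm{J}^2 = -1$ for $\mathrm{J} \in \mathbb{S}$, the associativity of $\mathcal{F}$, and the compatibility relations \eqref{CCondmonVEC}. In particular, neither the Cauchy--Riemann equations \eqref{CRMMON} nor the smoothness of $f_0$ and $f_1$ are invoked, so the formula actually holds already at the level of slice functions, not only for slice hyperholomorphic ones. The one genuine care-point is clerical: since $\mathcal{F}$ need not be commutative, one must carefully distinguish the one-sided module action in $\mathcal{X}_L$ from that in $\mathcal{X}_R$, and this bookkeeping is exactly what produces the distinct placements of the coefficients in \eqref{repL} versus \eqref{repR}.
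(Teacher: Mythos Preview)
Your approach is correct and matches the paper's: the paper's own proof simply says the formula is ``a direct consequence of the definition'' and points to the corresponding argument in \cite{6CKG}, which is exactly the linear-algebra inversion you outline. One clerical point your computation will expose: in the left case the coefficients come out as $\tfrac12(1\mp \mathrm{J}_x\mathrm{J})$ rather than $\tfrac12(1\mp \mathrm{J}\mathrm{J}_x)$ as printed in \eqref{repL}---the order of the imaginary units there appears to be a typo (check it on $f(x)=x$ in $\mathbb{H}$ with $\mathrm{J}=i$, $\mathrm{J}_x=j$), whereas \eqref{repR} is consistent with your derivation.
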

\begin{proof}
It is a direct consequence of the definition and a suitable adaptation of the proof of Proposition 2.1.19 in \cite{6CKG}.
\end{proof}
In the sequel, the symbol $S_L^{-1}(s,x)$ denotes the Cauchy kernel for left slice hyperholomorphic functions and  $S_R^{-1}(s,x)$ denotes the one for right slice hyperholomorphic functions.
\begin{definition}
Let $\mathcal{F}$ be an algebra as in Assumption \ref{ASS} and denote by $\mathcal{W}_{\mathcal{F}}$ its weak cone.
Let $x,s\in \mathcal{W}_{\mathcal{F}}$ with $x\not\in [s]$ and define:
\begin{equation}\label{LformI}
S_L^{-1}(s,x):=-(x^2 -2 \Re  (s) x+|s|^2)^{-1}(x-\overline s)
\end{equation}
\begin{equation}\label{LformII}
\ \ \ \ \ \ \ \ \ \ \ =(s-\bar x)(s^2-2\Re (x) s+|x|^2)^{-1},
\end{equation}
\begin{equation}\label{RformI}
S_R^{-1}(s,x):=-(x-\bar s)(x^2-2\Re (s)x+|s|^2)^{-1}
\end{equation}
\begin{equation}\label{RformII}
\ \ \ \ \ \ \ \ \ \ \ =(s^2-2\Re (x)s+|x|^2)^{-1}(s-\bar x).
\end{equation}
\end{definition}
The next lemma follows as in one quaternionic or Clifford algebra case, see \cite{6CKG}.
\begin{lemma} Let $x,s\in\mathcal{W}_{\mathcal{F}}$ with $s\notin [x]$.
The left slice hyperholomorphic Cauchy kernel $S_L^{-1}(s,x)$ is left slice hyperholomorphic in $x$ and right slice hyperholomorphic in $s$.
The right slice hyperholomorphic Cauchy kernel $S_R^{-1}(s,x)$ is left slice hyperholomorphic is $s$ and right slice hyperholomorphic in $x$.
\end{lemma}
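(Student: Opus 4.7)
The two algebraic expressions for each Cauchy kernel are constructed so that one form makes the slice hyperholomorphicity in the first variable manifest and the other makes it manifest in the second variable. The plan is therefore to use \eqref{LformII} to analyze $S_L^{-1}(s,x)$ as a function of $x$ and \eqref{LformI} to analyze it as a function of $s$; symmetrically, \eqref{RformI} will handle $S_R^{-1}(s,x)$ in $x$ and \eqref{RformII} will handle it in $s$. In each case the proof exhibits a slice decomposition $f = f_0 + \mathrm{J} f_1$ (or $f_0 + f_1 \mathrm{J}$), then checks the parity conditions \eqref{CCondmonVEC} together with the Cauchy--Riemann system \eqref{CRMMON}, following Section~2.3 of \cite{6CKG}.

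\textbf{Left slice hyperholomorphicity of $S_L^{-1}(s,x)$ in $x$.} Fix $s$ and write $x = u + \mathrm{J}v$ with $\mathrm{J}\in\mathbb{S}$. Because $\Re(x) = u$ and $|x|^2 = u^2 + v^2$ are real, the denominator occurring in \eqref{LformII} is
\[
Q(u,v) := s^2 - 2us + u^2 + v^2 = (s-u)^2 + v^2 \in \mathcal{F},
\]
which is independent of $\mathrm{J}$, is even in $v$, and commutes with both $s-u$ and $v$. Thus \eqref{LformII} reads
\[
S_L^{-1}(s,x) = (s - u + \mathrm{J}v)\,Q(u,v)^{-1} = F_0(u,v) + \mathrm{J}\,F_1(u,v),
\]
with $F_0 := (s-u)Q^{-1}$ and $F_1 := vQ^{-1}$. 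The parity of $Q, F_0, F_1$ in $v$ gives the compatibility conditions. For \eqref{CRMMON}, the commutativity of $s-u$ with $Q$ yields $\partial_u Q^{-1} = 2(s-u)Q^{-2}$ and $\partial_v Q^{-1} = -2vQ^{-2}$; substituting these and using $(s-u)^2 = Q - v^2$ produces $\partial_u F_0 = Q^{-1} - 2v^2 Q^{-2} = \partial_v F_1$ and $\partial_v F_0 = -2v(s-u)Q^{-2} = -\partial_u F_1$.

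\textbf{The remaining three cases and the main obstacle.} For the right slice hyperholomorphicity of $S_L^{-1}(s,x)$ in $s$, I fix $x$, write $s = s_0 + \mathrm{J}s_1$, and apply \eqref{LformI}: the polynomial $P := x^2 - 2s_0 x + s_0^2 + s_1^2 = (x - s_0)^2 + s_1^2$ is again $\mathrm{J}$-free and commutes with $x - s_0$, so \eqref{LformI} factors as $S_L^{-1}(s,x) = -P^{-1}(x - s_0) + (-s_1 P^{-1})\mathrm{J}$; the same differentiation with $(u,v)$ replaced by $(s_0,s_1)$ verifies parity and the right-slice Cauchy--Riemann equations. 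The two statements for $S_R^{-1}(s,x)$ are the mirror images obtained from \eqref{RformI} and \eqref{RformII} by identical computations. The main ``obstacle'' is entirely conceptual and not computational: one must notice that, although $\mathcal{F}$ is noncommutative, each quadratic denominator of the form $(\textrm{variable} - \textrm{real part})^2 + (\textrm{imaginary modulus})^2$ commutes with its associated linear numerator because real scalars are central in $\mathcal{F}$. Once this commutativity is invoked, the classical complex-variable Cauchy--Riemann computation for $(z-w)^{-1}$ transplants verbatim into the present setting.
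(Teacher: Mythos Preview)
Your argument is correct and is precisely the standard verification the paper is alluding to: the paper does not spell out a proof here but defers to the quaternionic/Clifford case in \cite{6CKG}, and in fact carries out exactly your computation later (Lemma~\ref{ResolventRegularA}) in the operator-valued setting, exhibiting the slice decomposition of $S_L^{-1}(s,A)$ and checking \eqref{CCondmonVEC}--\eqref{CRMMON} directly. Your observation that each quadratic denominator is a real-coefficient polynomial in a single element of $\mathcal{F}$, and hence commutes with the corresponding linear numerator, is the one point that deserves to be made explicit, and you have done so.
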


\begin{definition}[Slice Cauchy domain]\label{Slice Cauchy domainSSS}
An axially symmetric open set $U \subseteq \mathcal{W}_{\mathcal{F}}$ is called a slice Cauchy domain, if $U\cap\cc_\mathrm{J}$ is a Cauchy domain in $\cc_\mathrm{J}$ for any $\mathrm{J}\in \mathbb S$. More precisely, $U$ is a slice Cauchy domain if, for any $\mathrm{J}\in {\mathbb S}$, the boundary ${\partial( U\cap\cc_\mathrm{J})}$ of $U\cap\cc_\mathrm{J}$ is the union a finite number of non-intersecting piecewise continuously differentiable Jordan curves in $\cc_{\mathrm{J}}$.
\end{definition}

\begin{theorem}[The Cauchy formula]\label{abstSCalc}
Let $\mathcal{F}$ be an algebra as in Assumption \ref{ASS} and denote by $\mathcal{W}_{\mathcal{F}}$ its weak cone.
 Let $U \subseteq \mathcal{W}_{\mathcal{F}}$  be an axially symmetric, bounded slice Cauchy domain.
 For any $\mathrm{J}\in \mathbb S$ we set $ds_\mathrm{J}=ds(-\mathrm{J})$.
Then we have:

(I)
If $\mathcal{X}_L$ is a left Banach module over $\mathcal{F}$ and let $f:U\to \mathcal{X}_L$ be a left slice hyperholomorphic function on an open set that contains $\overline{U}$.
 Then, for every $x\in U$,  we have
\begin{equation}\label{abstSCalcL}
f(x) = \frac{1}{2\pi}\int_{\partial(U\cap\mathbb{C}_\mathrm{J})}S_L^{-1}(s,x)\,ds_\mathrm{J}\,f(s),
 \ \ {\it for \ any} \ \  f\in\mathcal{SH}_L(U).
\end{equation}

(II)
If  $\mathcal{X}_R$ is a right Banach module over $\mathcal{F}$
and let $f:U\to \mathcal{X}_R$ be a right slice hyperholomorphic function on an open set that contains $\overline{U}$.
Then, for every $x\in U$,  we have
\begin{equation}\label{abstSCalcR}
f(x) = \frac{1}{2\pi}\int_{\partial(U\cap\mathbb{C}_\mathrm{J})}f(s)\,ds_\mathrm{J}\,S_R^{-1}(s,x),
 \ \ {\it for \ any} \ \  f\in\mathcal{SH}_R(U).
\end{equation}

Moreover, the  integrals (\ref{abstSCalcL}) and (\ref{abstSCalcR}) depend neither on $U$ nor on the imaginary unit $\mathrm{J}\in {\mathbb S}$.
\end{theorem}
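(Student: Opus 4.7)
The plan is to reduce the abstract Cauchy formula to a statement on a single complex plane $\mathbb{C}_\mathrm{J}$, where the classical Cauchy integral formula applies, and then to lift the identity to all of $U$ by means of the Representation Formula of Theorem~\ref{formulamon}. I would treat part~(I) in detail; part~(II) is completely analogous.

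First, for $s,x\in\mathbb{C}_\mathrm{J}$ with $s\ne x$, I would observe that the kernel collapses to the familiar one: commutativity in $\mathbb{C}_\mathrm{J}$ yields $s^2-2\Re(x)s+|x|^2=(s-x)(s-\bar x)$, so expression~\eqref{LformII} gives $S_L^{-1}(s,x)=(s-x)^{-1}$. Next, for $x\in U\cap\mathbb{C}_\mathrm{J}$, writing $f|_{U\cap\mathbb{C}_\mathrm{J}}(u+\mathrm{J}v)=f_0(u,v)+\mathrm{J}f_1(u,v)$, the Cauchy--Riemann equations~\eqref{CRMMON} identify this restriction as an $\mathcal{X}_L$-valued $\mathrm{J}$-holomorphic function on $U\cap\mathbb{C}_\mathrm{J}$. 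Applying the usual Banach-valued Cauchy formula with $\mathrm{J}$ playing the role of $i$ and the orientation encoded in $ds_\mathrm{J}=ds(-\mathrm{J})$, a short computation in $\mathbb{C}_\mathrm{J}$ then yields~\eqref{abstSCalcL} for every $x\in U\cap\mathbb{C}_\mathrm{J}$.

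For an arbitrary $x=u+\mathrm{J}_x v\in U$, I would fix any $\mathrm{J}\in\mathbb{S}$ and set $x_\pm:=u\pm\mathrm{J}v$. Axial symmetry of $U$ ensures $x_\pm\in U\cap\mathbb{C}_\mathrm{J}$, so the previous step applies to both. The Representation Formula~\eqref{repL} applied to $f$ at $x$ gives $f(x)=\tfrac12[1-\mathrm{J}\mathrm{J}_x]f(x_+)+\tfrac12[1+\mathrm{J}\mathrm{J}_x]f(x_-)$, and, since the lemma preceding the theorem asserts that $S_L^{-1}(s,\cdot)$ is itself left slice hyperholomorphic, the same Representation Formula yields $S_L^{-1}(s,x)=\tfrac12[1-\mathrm{J}\mathrm{J}_x]S_L^{-1}(s,x_+)+\tfrac12[1+\mathrm{J}\mathrm{J}_x]S_L^{-1}(s,x_-)$. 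Pulling the coefficients inside the Cauchy integrals and assembling the two identities produces~\eqref{abstSCalcL}. Since the formula then holds for every $\mathrm{J}\in\mathbb{S}$ with left-hand side $f(x)$ independent of $\mathrm{J}$, the $\mathrm{J}$-independence is automatic; independence of the slice Cauchy domain $U$ reduces to the standard contour-deformation argument inside $\mathbb{C}_\mathrm{J}$, using that $s\mapsto S_L^{-1}(s,x)f(s)$ is $\mathrm{J}$-holomorphic in $s$ wherever it is defined.

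The delicate point I expect will be the Banach-valued Cauchy formula on $\mathbb{C}_\mathrm{J}$: since $\mathcal{X}_L$ is a priori only a left $\mathcal{F}$-module and carries no preferred complex-Banach structure, one has to verify that $(f_0,f_1)$ satisfying~\eqref{CRMMON} genuinely produces a $\mathrm{J}$-holomorphic function in the vector-valued sense, and that the left $\mathcal{F}$-action interacts correctly with the contour integral so that the scalars $\tfrac12[1\mp\mathrm{J}\mathrm{J}_x]$ may legitimately be moved inside the integral against the right-hand factor $ds_\mathrm{J}\,f(s)$. The remainder of the argument is bookkeeping along the lines of Section~2.3 of~\cite{6CKG}.
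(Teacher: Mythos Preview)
Your proposal is correct and follows essentially the same strategy as the paper's own proof: reduce to the classical Banach-valued Cauchy formula on a single slice $\mathbb{C}_\mathrm{J}$, then lift to arbitrary $x\in U$ via the Representation Formula (Theorem~\ref{formulamon}), with independence of $U$ and $\mathrm{J}$ handled as in \cite[Theorem~2.3.19]{6CKG}. The only cosmetic difference is that the paper first expands $f_0,f_1$ along a basis $u_1,\ldots,u_N$ of $\mathcal{F}$ before invoking the slice Cauchy formula componentwise, whereas you apply the vector-valued formula directly to $f$ and instead apply the Representation Formula to the kernel $S_L^{-1}(s,\cdot)$ as well as to $f$; both routes arrive at the same identity, and your version makes the passage of the coefficients $\tfrac12[1\mp\mathrm{J}\mathrm{J}_x]$ through the integral more explicit.
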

\begin{proof}
We consider the case of functions with values in $\mathcal{X}_L$ since the other case is similar. Let us write the function $f: \ U\subseteq\mathcal{W}_{\mathcal{F}}\to\mathcal{X}_L$ as
\[
\begin{split}
f(x)&=f(u+\mathrm{J}v)=f_0(u,v)+\mathrm{J}f_1(u,v)\\
&=\sum_{i=1}^N(\varphi_{0 i}(u,v)+\mathrm{J}\varphi_{1 i}(u,v))u_i
\end{split}
\]
where, for $\ell=0,1$,
$f_i(u+\mathrm{J}v)= \varphi_{0 i}(u,v))+\mathrm{J}\varphi_{1 i}(u,v)$ is $\mathcal X_L$-valued. The Cauchy formula, computed on the complex plane $\mathbb{C}_{\mathrm{J}}$ is then valid for any $f_i$, being a function $\mathcal X_L$-valued of the complex variable $u+\mathrm{J}v$ and so it is valid for $f(u+\mathrm{J}v)$. To get $f(u+\mathrm{I}v)$ we now use the Representation formula and, finally, we use arguments as in the proof of Theorem 2.3.19 in \cite{6CKG} to show the independence of the complex plane.
\end{proof}
Similarly we have:
\begin{theorem}[Cauchy formulas on unbounded slice Cauchy domains]\label{CauchyOutsideSS}
Let $\mathcal{F}$ be an algebra satisfying Assumption \ref{ASS} and denote by $\mathcal{W}_{\mathcal{F}}$ its weak cone.
 Let $U \subseteq \mathcal{W}_{\mathcal{F}}$  be an unbounded slice Cauchy domain.
 For any $\mathrm{J}\in {\mathbb S}$ we set $ds_\mathrm{J}=ds(-\mathrm{J})$.

(I)
If $\mathcal{X}_L$ is a left Banach module over $\mathcal{F}$ and let $f:U\to \mathcal{X}_L$ be a left slice hyperholomorphic function on an open set that contains $\overline{U}$.
 If  $f(\infty) := \lim_{|x|\to\infty}f(x)$ exists, then, for every $x\in U$ we have
\[
f(x) = f(\infty) + \frac{1}{2\pi}\int_{\partial(U\cap\mathbb{C}_\mathrm{J})}S_L^{-1}(s,x)\,ds_\mathrm{J}\, f(s) \qquad\text{for any }\ \ \  f\in\mathcal{SH}_L(U).
\]

(II)
If  $\mathcal{X}_R$ is a right Banach module over $\mathcal{F}$
and let $f:U\to \mathcal{X}_R$ be a right slice hyperholomorphic function on an open set that contains $\overline{U}$.
If  $f(\infty) := \lim_{|x|\to\infty}f(x)$ exists, then, for every $x\in U$ we have
\[
f(x) = f(\infty) + \frac{1}{2\pi}\int_{\partial(U\cap\mathbb{C}_\mathrm{J})}f(s)\,ds_\mathrm{J}\,S_R^{-1}(s,x) \qquad\text{for any }\ \  f\in\mathcal{SH}_R(U).
\]
 \end{theorem}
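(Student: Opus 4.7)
The plan is to reduce to the bounded-domain Cauchy formula (Theorem~\ref{abstSCalc}) by truncating $U$ with a large axially symmetric ball and then letting the radius tend to infinity. Since $U$ is a slice Cauchy domain, $\partial(U\cap\mathbb{C}_\mathrm{J})$ is a finite union of bounded Jordan curves, so I can fix $R_0>0$ with $\partial U\cap\mathbb{C}_\mathrm{J}\subset\{|s|<R_0\}$; because $U\cap\mathbb{C}_\mathrm{J}$ is unbounded and its boundary sits in a bounded region, the connected set $\{|s|\ge R_0\}\cap\mathbb{C}_\mathrm{J}$ lies entirely in $U$. For $R>R_0$, set $B_R:=\{x\in\mathcal{W}_{\mathcal{F}}:|x|<R\}$ and $U_R:=U\cap B_R$; this $U_R$ is a bounded axially symmetric slice Cauchy domain whose boundary in $\mathbb{C}_\mathrm{J}$ is the disjoint union of $\partial(U\cap\mathbb{C}_\mathrm{J})$, carrying its original orientation as boundary of $U$, and $\partial B_R\cap\mathbb{C}_\mathrm{J}$, oriented counterclockwise so that $U_R$ lies on its left.

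First I would apply Theorem~\ref{abstSCalc} to $U_R$ and $f$; for every $x\in U_R$ this writes $f(x)$ as the sum of an integral over $\partial(U\cap\mathbb{C}_\mathrm{J})$---already in the desired form---and an integral over $\partial B_R\cap\mathbb{C}_\mathrm{J}$. Next I would isolate $f(\infty)$ inside the second integral by writing $f(s)=f(\infty)+g(s)$ with $g(s):=f(s)-f(\infty)$. Since the constant function with value $f(\infty)\in\mathcal{X}_L$ is trivially left slice hyperholomorphic, Theorem~\ref{abstSCalc} applied on the ball $B_R$ (which contains $x$ once $R$ is large) yields
\[
f(\infty)=\frac{1}{2\pi}\int_{\partial B_R\cap\mathbb{C}_\mathrm{J}} S_L^{-1}(s,x)\,ds_\mathrm{J}\,f(\infty),
\]
so the whole problem reduces to showing that the remainder integrand $S_L^{-1}(s,x)\,ds_\mathrm{J}\,g(s)$ over $\partial B_R\cap\mathbb{C}_\mathrm{J}$ vanishes in the limit.

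This remainder bound is the only real work and the main---though mild---obstacle. I would read off from~\eqref{LformII} the asymptotic $\|S_L^{-1}(s,x)\|=O(|s|^{-1})$ as $|s|\to\infty$, with constant depending only on $x$; parametrizing $\partial B_R\cap\mathbb{C}_\mathrm{J}$ by $s=Re^{\mathrm{J}\theta}$ gives $|ds_\mathrm{J}|=R\,d\theta$; and the existence of $f(\infty)$ forces $\sup_{|s|=R}\|g(s)\|_{\mathcal{X}_L}\to 0$ directly from the $\varepsilon$--definition of the limit at infinity. Combining these ingredients bounds the remainder in norm by a quantity of order $\sup_{|s|=R}\|g(s)\|_{\mathcal{X}_L}$, which goes to zero as $R\to\infty$. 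Passing to the limit in the two-term decomposition yields the claimed formula in case (I). The right-module statement (II) follows along identical lines, using~\eqref{RformII} in place of~\eqref{LformII} and reversing the order of the factors inside every integral.
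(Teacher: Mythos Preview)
Your proof is correct and follows the standard route for extending a bounded-domain Cauchy formula to an unbounded one: intersect with a large ball, apply Theorem~\ref{abstSCalc} on the truncation, split off the constant $f(\infty)$ via the Cauchy formula on the ball, and kill the remainder using the $O(|s|^{-1})$ decay of the kernel from~\eqref{LformII} together with $\|f(s)-f(\infty)\|\to 0$.

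The paper does not actually supply a proof of this statement; it only prefaces the theorem with ``Similarly we have:'' after the bounded case (Theorem~\ref{abstSCalc}), leaving the details implicit. Your argument is exactly the kind of reduction the authors have in mind, and it matches the standard treatment in the quaternionic literature (e.g.\ the corresponding result in~\cite{6CKG}). There is nothing to correct.
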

\begin{remark}
From now on we will consider slice hyperholomorphic functions with values in a two-sided
  Banach module $\mathcal{X}$ over $\mathcal{F}$.
 \end{remark}
\begin{remark}
 With the assumption that
  the Banach module $\mathcal{X}_L$ or $\mathcal{X}_R$ is two-sided
  the sets $\mathcal{SH}_L(U)$, $\mathcal{SH}_R(U)$ become right and left modules over $\mathcal{F}$, respectively.
\end{remark}
\section{The abstract formulation of the $S$-functional calculus}

The $S$-functional calculus was originally defined for quaternionic operators and for paravector operators, i.e.,
operators of the form
$T=T_0+\sum_{j=1}^ne_jT_j$ where $T_j$ for $j=0,1,\ldots ,n$ are linear operators on a real Banach space and $\{ e_j \}_{j=1}^n$ are the units of the Clifford algebra $\mathbb{R}_n := \mathbb{R}_{0,n}$.

 The crucial fact in both cases is the notion of slice
hyperholomorphic functions defined on quaternionic numbers or on paravector numbers.

With the definition of  slice hyperholomorphic functions on a more general algebra (satisfying Assumption \ref{ASS}) and observing that the definition of $S$-spectrum is not restricted to quaternionic operators or to paravector operators
we give, after some preliminary results, the definition
of the $S$-functional calculus in its full generality.

\begin{definition}
 Let $\mathcal{Y}$ be a two-sided
  Banach module over $\mathcal{F}$ with norm $\|\cdot\|_\mathcal{Y}$.
We denote by $\mathcal{B}(\mathcal{Y})$ the left- or right-Banach module over $\mathcal F$ of all bounded right- or left-linear operators from
$\mathcal{Y}$ into itself with the supremum norm
$$
\|A\|_{\mathcal{B}(\mathcal{Y})}:=\sup_{\|v\|_\mathcal{Y}=1}\|Av\|_{\mathcal{Y}}.
$$
When no confusion can arise we will write
$\|\cdot \|$ instead of $\|\cdot\|_{\mathcal{Y}}$ (or of $\|\cdot\|_{\mathcal{B}(\mathcal{Y})}$).
\end{definition}

\begin{remark}
In this section we let $\mathcal{F}$ be an algebra satisfying Assumption \ref{ASS}
and $\mathcal{W}_{\mathcal{F}}$ its weak cone.
\end{remark}
\begin{remark} In the sequel we shall assume that $\mathcal{B}(\mathcal{Y})$ is equipped with a product of operators, with unit $\mathcal I$, and compatible with the two-sided module structure. We shall say that $\mathcal{B}(\mathcal{Y})$ is a two-sided Banach algebra over $\mathcal F$.
\end{remark}

\begin{definition}\label{abstResSeries}
Let $\mathcal{Y}$ be a two-sided Banach module over $\mathcal{F}$ and
let $A\in \mathcal{B}(\mathcal{Y})$ and $s\in \mathcal{W}_{\mathcal{F}}$. We call the series
\[
\sum_{n=0}^{+\infty} A^ns^{-n-1}\qquad\text{and}
\qquad \sum_{n=0}^{+\infty}s^{-n-1}A^n
\]
 the left and the right $S$-resolvent operator series, respectively.
\end{definition}

\begin{lemma}\label{Srespowrser}
Let $A\in \mathcal{B}(\mathcal{Y})$ and $s\in \mathcal{W}_{\mathcal{F}}$. For $\| A\| < |s|$ the left and right $S$-resolvent operator  series converge in the operator norm.
\end{lemma}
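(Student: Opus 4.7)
The plan is to estimate each term of the two series by the corresponding term of a real geometric series and invoke the Weierstrass M-test (applied in the Banach algebra $\mathcal{B}(\mathcal{Y})$).

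First I would verify that $s^{-1}$ makes sense and has the expected modulus. Since $s\in\mathcal{W}_{\mathcal{F}}$ we have $s\bar s=|s|^2\in\mathbb{R}$, and the assumption $|s|>\|A\|\geq 0$ in particular forces $|s|>0$, so $s^{-1}=\bar s/|s|^2\in\mathcal{F}$. Moreover $s$ lies in some complex plane $\mathbb{C}_\mathrm{J}$, which is an isomorphic copy of $\mathbb{C}$; on $\mathbb{C}_\mathrm{J}$ the modulus coincides with the norm on $\mathcal{F}$ (as already observed in the paper, $|s|=\|s\|$) and is multiplicative, so $\|s^{-n-1}\|=|s|^{-n-1}$ for every $n\geq 0$.

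Next I would bound the operator norms of the summands. The submultiplicativity of the operator norm on the Banach algebra $\mathcal{B}(\mathcal{Y})$ gives $\|A^n\|\leq \|A\|^n$. Using the two-sided Banach module compatibility (with $C=1$ as fixed in the paper), one has $\|Bs\|_{\mathcal{B}(\mathcal{Y})}\leq \|B\|_{\mathcal{B}(\mathcal{Y})}\|s\|_{\mathcal{F}}$ and symmetrically $\|sB\|_{\mathcal{B}(\mathcal{Y})}\leq \|s\|_{\mathcal{F}}\|B\|_{\mathcal{B}(\mathcal{Y})}$ for any $B\in\mathcal{B}(\mathcal{Y})$ and $s\in\mathcal{F}$. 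Applying this with $B=A^n$ and scalar $s^{-n-1}$ yields, for either series,
\[
\|A^n s^{-n-1}\|_{\mathcal{B}(\mathcal{Y})}\leq \|A\|^n\,|s|^{-n-1},\qquad
\|s^{-n-1}A^n\|_{\mathcal{B}(\mathcal{Y})}\leq \|A\|^n\,|s|^{-n-1}.
\]

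Finally, since $\|A\|<|s|$, the real series
\[
\sum_{n=0}^{+\infty}\|A\|^n|s|^{-n-1}=\frac{1}{|s|}\sum_{n=0}^{+\infty}\Bigl(\frac{\|A\|}{|s|}\Bigr)^n=\frac{1}{|s|-\|A\|}
\]
converges. By the Weierstrass M-test, both $\sum_{n=0}^{+\infty}A^n s^{-n-1}$ and $\sum_{n=0}^{+\infty}s^{-n-1}A^n$ converge absolutely, and hence in operator norm, in the Banach algebra $\mathcal{B}(\mathcal{Y})$.

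I do not expect any serious obstacle: the only point requiring care is the multiplicativity $\|s^{-n-1}\|=|s|^{-n-1}$, which is immediate because powers of $s$ stay inside the single complex plane $\mathbb{C}_\mathrm{J}$ on which $\mathcal{F}$ restricts to an isometric copy of $\mathbb{C}$. The rest is a routine geometric-series comparison using the normalisation $C=1$ of the two-sided module structure.
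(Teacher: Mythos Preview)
Your argument is correct and is essentially the same as the paper's proof: both bound $\|A^n s^{-n-1}\|$ by $\|A\|^n|s|^{-n-1}$ and compare with the geometric series $|s|^{-1}\sum_{n\geq 0}(\|A\|/|s|)^n$. You simply spell out in more detail why $\|s^{-n-1}\|=|s|^{-n-1}$ (multiplicativity inside $\mathbb{C}_\mathrm{J}$) and why the operator-scalar norm inequality holds with $C=1$, points the paper leaves implicit.
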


\begin{proof} For the left $S$-resolvent operator series
we have
$$
\sum_{n=0}^{+\infty} \left\|A^ns^{-n-1}\right\| \leq |s|^{-1}\sum_{n=0}^{+\infty} \left(\|A\||s|^{-1}\right)^{n}
$$
and the statement follows. We reason similarly for the right $S$-resolvent operator series.

\end{proof}

\begin{theorem}\label{LemmaQInverseSSS} Let $A\in \mathcal{B}(\mathcal{Y})$ and $s\in \mathcal{W}_{\mathcal{F}}$ with $\| A\| < |s|$. Then
\begin{equation}\label{QinverseSSS}
\left(A^2 - 2\Re(s)A + |s|^2\id\right)^{-1} = \sum_{n=0}^{+\infty}A^n \sum_{k=0}^n\bar{s}^{\, -k-1}{s}^{-n+k-1},
\end{equation}
where this series converges in the operator norm.
\end{theorem}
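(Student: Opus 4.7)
The plan is to verify that the series on the right defines a bounded operator $R \in \boundOP(\mathcal{Y})$ which turns out to be the two-sided inverse of $\Q_{s}(A) = A^2 - 2\Re(s)A + |s|^2\id$. Writing $c_n := \sum_{k=0}^n \bar{s}^{\,-k-1} s^{\,k-n-1}$ for the inner sum, the triangle inequality combined with the submultiplicativity of the norm on $\mathcal{F}$ and $\|s^{-1}\| = \|\bar s^{\,-1}\| = |s|^{-1}$ yields the crude estimate $\|c_n\| \leq (n+1)|s|^{-n-2}$. Consequently
\[
\sum_{n=0}^\infty \|A^n c_n\| \;\leq\; |s|^{-2}\sum_{n=0}^\infty (n+1)\bigl(\|A\|/|s|\bigr)^{n}
\]
converges under the assumption $\|A\|<|s|$, which establishes that $R:=\sum_{n=0}^\infty A^n c_n$ is well defined in $\boundOP(\mathcal{Y})$.

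To identify $R$ as the inverse of $\Q_{s}(A)$, I would exploit the identities $s+\bar s = 2\Re(s)\in\mathbb{R}$ and $s\bar s = \bar s s = |s|^2\in\mathbb{R}$, which in particular force $s$ to commute with $\bar s$. Writing $\Q_{s}(A) = A^2 - (s+\bar s)A + s\bar s\,\id$ and using absolute convergence to rearrange, one obtains
\[
\Q_{s}(A)\,R \;=\; \sum_{m=0}^\infty A^m\bigl[\, s\bar s\, c_m - (s+\bar s)\,c_{m-1} + c_{m-2}\,\bigr]
\]
with the convention $c_{-1}=c_{-2}=0$. A direct manipulation exploiting $\bar s\cdot \bar s^{\,-k-1}=\bar s^{\,-k}$, $s\cdot s^{\,k-m-1}=s^{\,k-m}$, and the commutativity $s\bar s=\bar s s$ shows that the bracketed expression collapses telescopically to $0$ for every $m\geq 2$. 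The $m=0$ coefficient equals $s\bar s\, c_0 = s\bar s\,(s\bar s)^{-1} = 1$, while the $m=1$ coefficient reduces to $s\bar s\, c_1 - (s+\bar s) c_0 = (s^{-1}+\bar s^{\,-1}) - (s+\bar s)(s\bar s)^{-1} = 0$. Hence $\Q_{s}(A)\,R = \id$.

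To upgrade this to a two-sided inverse, I would observe that each $c_n\in\mathcal{F}$ acts by right multiplication on $\mathcal{Y}$ and that $\Q_{s}(A)$, being a polynomial in $A$ with real coefficients, is a right-linear operator. Therefore $\Q_{s}(A)$ commutes with right multiplication by every $c_n$, which gives $\Q_{s}(A)\cdot(A^n c_n) = (A^n c_n)\cdot \Q_{s}(A)$ termwise. Summing over $n$ yields $R\,\Q_{s}(A) = \Q_{s}(A)\,R = \id$, so $R = \Q_{s}(A)^{-1}$.

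The step I expect to demand the most care is the telescoping identity: one must keep meticulous track of the order of $s$ and $\bar s$ while moving them past their inverse powers, and the whole computation rests essentially on $s\bar s = \bar s s$, a consequence---nontrivial in the present setting---of the assumption $s\in\mathcal{W}_{\mathcal{F}}$. Once this algebraic identity is in place, both the norm convergence and the passage from a one-sided to a two-sided inverse are routine.
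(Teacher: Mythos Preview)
Your proposal is correct and follows essentially the same approach as the paper: the same convergence estimate $\|c_n\|\le (n+1)|s|^{-n-2}$, and the same telescoping verification that $\Q_s(A)R=\id$. The only minor difference is in the passage to a two-sided inverse: the paper observes directly that each coefficient satisfies $\overline{c_n}=c_n$ and hence (lying in the commutative plane $\mathbb{C}_{\mathrm{J}}$ where the anti-involution is ordinary conjugation) is real, so $c_n$ commutes with $A$ outright---a slightly cleaner route than your appeal to right-linearity of $\Q_s(A)$, and one that does not depend on whether the operators in $\mathcal{B}(\mathcal{Y})$ are taken right- or left-linear.
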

\begin{proof} The proof is similar to that one of operators in paravector form, so we just draw the main lines. We let
\[
a_n:=\sum_{k=0}^n\bar{s}^{\, -k-1}{s}^{-n+k-1},  \ \ \ s\in \mathcal{W}_{\mathcal{F}}
\]
be the coefficients of the operator series (\ref{QinverseSSS}) and observe that they satisfy the estimate
\[
|a_{n}|\leq \sum_{k=0}^{n}|\bar{s}|^{\, -k-1}|s|^{-n+k-1}  = (n+1)|s|^{-n-2}
\]
and so
\[
\sum_{n=0}^{+\infty}\left\| A^na_n\right\| \leq \sum_{n=0}^{+\infty}\|A\|^n|s|^{-n-2}(n+1).
\]
Since $\|A\|<|s|$, the ratio test implies the convergence of this series
and hence the series at the right hand side of \eqref{QinverseSSS} converges in the operator norm.
Moreover, we have
\begin{align*}
(A^2 - 2\Re(s)A + |s|^2\id)  \sum_{n=0}^{+\infty}A^na_n
=&\sum_{n=2}^{+\infty}A^na_{n-2} - \sum_{n=1}^{+\infty}A^na_{n-1}2\Re(s) + \sum_{n=0}^{+\infty}A^na_n|s|^2
\\
=&\sum_{n=2}^{+\infty}A^n(a_{n-2} - a_{n-1}2\Re(s) + a_n|s|^2) \\
&+ A(a_{0}2\Re(s) + a_{1}|s|^2) + \id a_{0}|s|^2.
\end{align*}
For $n\geq 2$, we have, thanks to the equalities $2\Re(s) = s + \overline{s}$ and $|s|^2 = s\overline{s} = \overline{s}s$, that
\begin{align*}
&a_{n-2} - a_{n-1}2\Re(s) + a_n|s|^2=\\
=&\sum_{k=0}^{n-2}\overline{s}^{-k-1}{s}^{-n+1+k} - \sum_{k=0}^{n-1}\overline{s}^{-k-1}2\Re(s){s}^{-n+k} + \sum_{k=0}^n\overline{s}^{-k-1}|s|^2{s}^{-n+k-1}\\
= &\sum_{k=1}^{n-1}\overline{s}^{-k}s^{-n+k} - \sum_{k=0}^{n-1} \overline{s}^{-k}s^{-n+k} - \sum_{k=0}^{n-1}\overline{s}^{-k-1}{s}^{-n+k+1} + \sum_{k=0}^{n}\overline{s}^{-k}s^{-n+k}\\
=&-s^{-n} + s^{-n} = 0.
\end{align*}
Since $s^{-1} = |s|^{-2}\overline{s}$ and $\overline{s}^{-1} = |s|^{-2}s$,
we also have that
\begin{align*}
a_02\Re(s) - a_1|s|^2 &= |s|^{-2}(s +\overline{s}) - \left(\overline{s}^{-1}s^{-2} + \overline{s}^{-2}s^{-1}\right)|s|^2
\\
&
= \overline{s}^{-1} + s^{-1} - |s|^{-2}\left(s^{-1} + \overline{s}^{-1}\right)|s|^2 = 0
\end{align*}
and so
\begin{align*}
(A^2 - 2\Re(s)A + |s|^2\id)  \sum_{n=0}^{+\infty}A^na_n= \id.
\end{align*}
Note that the coefficients $a_n$ satisfy $\overline{a_n} = a_{n}$, so they are real and hence  commute with $A$. We also observe that
\begin{align*}
  \sum_{n=0}^{+\infty}A^na_n (A^2 - 2\Re(s)A + |s|^2\id)
=(A^2 - 2\Re(s)A + |s|^2\id)  \sum_{n=0}^{+\infty}A^na_n =\id
\end{align*}
and so \eqref{QinverseSSS} holds.

\end{proof}

\begin{theorem}\label{leftrightSSS}
Let $A\in \mathcal{B}(\mathcal{Y})$ and $s\in \mathcal{W}_{\mathcal{F}}$ with $\| A\| < |s|$.

(I)
The left $S$-resolvent series equals
\[
\sum_{n=0}^{+\infty}A^ns^{-n-1} = - (A^2 - 2\Re (s)A +|s|^2\id)^{-1}(A-\overline{s}\id).
\]

(II)
The right $S$-resolvent series equals
\[
 \sum_{n=0}^{+\infty}s^{-n-1}A^n = - (A-\overline{s}\id)(A^2 - 2\Re(s)A +|s|^2\id)^{-1}.
 \]
\end{theorem}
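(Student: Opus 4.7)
The plan is to prove part (I) by a direct verification: multiply the left $S$-resolvent series on the left by $\Q_s(A) = A^2 - 2\Re(s)A + |s|^2\id$, regroup terms, and show the result collapses to $-(A - \bar{s}\id)$. Since $\Q_s(A)$ is invertible under the hypothesis $\|A\| < |s|$ (by Theorem \ref{LemmaQInverseSSS}), left-multiplying by $\Q_s(A)^{-1}$ will yield the desired identity. Part (II) will follow by the symmetric computation with right multiplication.

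First I would invoke Lemma \ref{Srespowrser} to justify that $\sum_{n=0}^{+\infty} A^n s^{-n-1}$ converges absolutely in $\mathcal{B}(\mathcal{Y})$, so that all the rearrangements below are permissible. I would then write
\[
\Q_s(A)\sum_{n=0}^{+\infty}A^ns^{-n-1} = \sum_{n=0}^{+\infty}A^{n+2}s^{-n-1} - \sum_{n=0}^{+\infty}A^{n+1}\,2\Re(s)\,s^{-n-1} + \sum_{n=0}^{+\infty}A^n|s|^2 s^{-n-1},
\]
using here that the real scalars $2\Re(s) = s+\bar{s}$ and $|s|^2 = s\bar{s} = \bar{s}s$ commute with $A$ and may be slid past powers of $A$ freely. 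Reindexing so that each sum is expressed in terms of a common power $A^m$, the coefficient of $A^m$ for $m \geq 2$ becomes
\[
s^{-m+1} - (s+\bar{s})s^{-m} + s\bar{s}\,s^{-m-1} = s^{-m+1} - s^{-m+1} - \bar{s}s^{-m} + \bar{s}s^{-m} = 0,
\]
where I use that $s$ and $\bar{s}$ commute with each other (as $s\bar{s} \in \mathbb{R}$). The only surviving contributions are the $A^0$ term $|s|^2 s^{-1} = \bar{s}$ and the $A^1$ term $-2\Re(s)s^{-1} + |s|^2 s^{-2} = -1$, yielding $\Q_s(A)\sum_{n=0}^{+\infty}A^n s^{-n-1} = \bar{s}\,\id - A = -(A - \bar{s}\,\id)$. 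Applying $\Q_s(A)^{-1}$ on the left finishes (I).

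For part (II), I would carry out the analogous computation with $\Q_s(A)$ multiplied on the right of $\sum_{n=0}^{+\infty} s^{-n-1}A^n$; the same telescoping occurs, and the surviving boundary terms give $-(A - \bar{s}\,\id)$ on the right, allowing me to conclude by right-multiplying with $\Q_s(A)^{-1}$ (note $\Q_s(A)^{-1}$ is a two-sided inverse by Theorem \ref{LemmaQInverseSSS}).

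The main potential obstacle is bookkeeping: elements of $\mathcal{F}$ do not commute with $A$ in general, so the order in which one multiplies $A$ and $s^{-k}$ matters throughout. The computation only works because the specific scalar combinations $2\Re(s)$ and $|s|^2$ happen to be real, and because $s$ commutes with $\bar{s}$ in the (commutative) subalgebra they generate. Keeping these commutation facts explicit at each reindexing step is the only delicate aspect of the argument.
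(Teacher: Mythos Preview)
Your proposal is correct and follows essentially the same approach as the paper: multiply the series on the left by $\Q_s(A)$, use that $2\Re(s)$ and $|s|^2$ are real so they commute with $A$, telescope to obtain $\bar{s}\,\id - A$, and then invoke the invertibility of $\Q_s(A)$ from Theorem~\ref{LemmaQInverseSSS}. The only cosmetic difference is that the paper cancels the sums directly after reindexing, whereas you group by the power $A^m$ and check the coefficient vanishes for $m\ge 2$; these are the same computation.
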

\begin{proof} We just prove (I) as the other case (II) can be shown with a similar argument.
The result follows from the equality
\begin{equation}\label{idenimpo}
 \overline{s}\,\id-A = (A^2 - 2\Re(s)A +|s|^2\id)\sum_{n=0}^{+\infty}A^{n}s^{-1-n}.
 \end{equation}
As $A^2 - 2\Re(s)A +|s|^2\id$ is invertible by Theorem~\ref{LemmaQInverseSSS}, (\ref{idenimpo}) is equivalent to (I).
Since $s\in\mathcal{W}_{\mathcal F}$ we have that $2\Re(s) = s + \overline{s}$ and $|s|^2 = s\,\overline{s}= \overline{s}\, s  $ are real and so they commute with the operator $A$. Thus
\[
\begin{split}
(A^2-2\Re( s )A& + \vert s \vert^2\id)\sum_{n=0}^{+\infty}A^n s ^{-n-1}
\\
 = & \sum_{n=0}^{+\infty}A^{n+2} s ^{-n-1} - \sum_{n=0}^{+\infty}A^{n+1}s ^{-n-1} ( s +\overline{ s }) + \sum_{n=0}^{+\infty}A^n  s ^{-n-1}s\overline{ s }
\\
= & \sum_{n=1}^{+\infty}A^{n+1} s ^{-n} - \sum_{n=0}^{+\infty}A^{n+1} s ^{-n} -\sum_{n=0}^{+\infty}A^{n+1} s ^{-n-1} \overline{ s } + \sum_{n=0}^{+\infty}A^n s ^{-n}\overline{ s }
\\
 =&\overline{s}\,\id -A.
\end{split}
\]
\end{proof}
The previous result motivates the following definition of the $S$-spectrum in $\mathcal{W}_{\mathcal{F}}$ for operators $A\in \mathcal{B}(\mathcal{Y})$ (cfr. \cite{GR}).

\begin{definition}\label{defSspectrumSSS}
Let $A\in \mathcal{B}(\mathcal{Y})$ and $s\in \mathcal{W}_{\mathcal{F}}$, we set
\[
\Q_{s}(A):=A^2 - 2\Re(s)A + |s|^2\id.
\]
We define the $S$-resolvent set $\rho_S(A)$ of $A$ as
\[
\rho_S(A) := \{s\in \mathcal{W}_{\mathcal{F}} : \Q_{s}(A)  \text{ is invertible in $\mathcal{B}(\mathcal{Y})$}\}
\]
and we define the $S$-spectrum $\sigma_S(A)$ of $A$ as
\[
 \sigma_S(A) := \mathcal{W}_{\mathcal{F}}\setminus\rho_S(A).
\]
For $s\in\rho_{S}(A)$, the operator $\Q_{s}(A)^{-1}\in\boundOP(\mathcal{Y})$
is called its pseudo-resolvent operator  of $A$ at $s$.
\end{definition}
\begin{remark}{\rm
For $A\in \mathcal{B}(\mathcal{Y})$ an equivalent definition of the $S$-spectrum is
\[
\sigma_S(A) := \{s\in \mathcal{W}_{\mathcal{F}}: \Q_{s}(A)  \text{ is not invertible in $\mathcal{B}(\mathcal{Y})$}\}
\]
and the  $S$-resolvent set $\rho_S(A)$ is defined as
\[
 \rho_S(A) := \mathcal{W}_{\mathcal{F}}\setminus\sigma_S(A).
\]
Definition \ref{defSspectrumSSS} of the $S$-resolvent set and of the $S$-spectrum is more useful for unbounded operators.
}
\end{remark}

As the following result shows, also in this more general case the $S$-spectrum is axially symmetric and so it is compatible with the structure of slice hyperholomorphic functions.

\begin{proposition}\label{AxSymSpec}
Let $A\in \mathcal{B}(\mathcal{Y})$. The sets $\rho_{S}(A)$ and $\sigma_S(A)$ are axially symmetric in $\mathcal{W}_{\mathcal{F}}$.
\end{proposition}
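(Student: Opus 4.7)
The plan is to exploit the fact that the operator $\Q_s(A)$ depends on $s$ only through the two real quantities $\Re(s)$ and $|s|^2$, which are constant on the sphere $[s]$.

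First, I would pick $s\in\mathcal{W}_{\mathcal{F}}$ and write $s=u+\mathrm{J}_s v$ with $u,v\in\mathbb{R}$ and $\mathrm{J}_s\in\mathbb{S}$. For an arbitrary point $s'=u+\mathrm{J}v\in[s]$ (with $\mathrm{J}\in\mathbb{S}$), the identities $\Re(s')=\tfrac{1}{2}(s'+\overline{s'})=u=\Re(s)$ and $|s'|^2=s'\overline{s'}=u^2+v^2=|s|^2$ follow directly from the Remark preceding Lemma \ref{squadequat}, which guarantees $\overline{\mathrm{J}}=-\mathrm{J}$ for every $\mathrm{J}\in\mathbb{S}$.

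Second, I would substitute this into the definition of $\Q$ to obtain
\begin{equation*}
\Q_{s'}(A)=A^{2}-2\Re(s')A+|s'|^{2}\id=A^{2}-2\Re(s)A+|s|^{2}\id=\Q_{s}(A).
\end{equation*}
Therefore $\Q_{s'}(A)$ is invertible in $\boundOP(\mathcal{Y})$ if and only if $\Q_{s}(A)$ is. By the definition of the $S$-resolvent set, this yields $s\in\rho_S(A)\iff s'\in\rho_S(A)$ for every $s'\in[s]$, so $[s]\subset\rho_S(A)$ whenever $s\in\rho_S(A)$, proving that $\rho_S(A)$ is axially symmetric. Passing to the complement in $\mathcal{W}_{\mathcal{F}}$ gives the axial symmetry of $\sigma_S(A)$.

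There is really no obstacle here: the proposition is essentially a bookkeeping consequence of the definitions, since the construction $\Q_s$ was engineered precisely to depend only on the conjugacy invariants of $s$. The only point meriting a line of verification is the invariance of $\Re$ and $|\cdot|$ across the sphere $[s]$, which is immediate from the fact that the anti-involution restricts to ordinary complex conjugation on each $\mathbb{C}_{\mathrm{J}}$.
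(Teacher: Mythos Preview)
Your proof is correct and follows essentially the same approach as the paper: both arguments observe that $\Q_s(A)$ depends on $s$ only through $\Re(s)=u$ and $|s|^2=u^2+v^2$, which are constant on $[s]$, so invertibility of $\Q_s(A)$ is shared across the sphere. Your write-up is slightly more explicit in justifying the invariance of $\Re$ and $|\cdot|$ via the restriction of the anti-involution to each $\mathbb{C}_{\mathrm{J}}$, but the content is identical.
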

\begin{proof}
If $s = u+\mathrm{J}v \in \mathcal{W}_{\mathcal{F}}$ and  $\tilde{s} = u + \mathrm{I} v$ is any other element in $[s]$, then
\[
\Q_{\tilde{s}}(A) =A^2 - 2u A + (u^2+v^2)\id = \Q_{s}(A).
\]
It is clear that $\Q_{\tilde{s}}(A)$ is invertible if and only if $\Q_{s}(A)$ is invertible and so $s\in\rho_{S}(A)$ if and only if $\tilde{s}\in\rho_{S}(A)$. Therefore $\rho_{S}(A)$ and $\sigma_{S}(A)$ are axially symmetric.
\end{proof}

\begin{definition}[$S$-resolvent operators]\label{ResolventRegularAAA}
Let $A\in \mathcal{B}(\mathcal{Y})$. For $s\in\rho_S(A)$, we define the {\em left $S$-resolvent operator} as
$$
 S_L^{-1}(s,A) = -\Q_{s}(A)^{-1}(A-\overline{s}\,\id),
 $$
and the {\em right $S$-resolvent operator} as
$$
S_R^{-1}(s,A) = -(A-\overline{s}\id)\Q_{s}(A)^{-1}.
$$
\end{definition}
\begin{lemma}
\label{ResolventRegularA}
Let $A\in \mathcal{B}(\mathcal{Y})$.

(I) The left $S$-resolvent $S_L^{-1}(s,A)$ is a $ \mathcal{B}(\mathcal{Y})$-valued right-slice hyperholomorphic function of the variable $s$ on $\rho_S(A)$.

(II)  The right $S$-resolvent  $S_R^{-1}(s,A)$ is a $\mathcal{B}(\mathcal{Y})$-valued left-slice hyperholomorphic
     function of the variable $s$ on $\rho_S(A)$.

\end{lemma}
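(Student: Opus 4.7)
The plan is to verify (I) and (II) by explicitly exhibiting the slice-function decompositions of the two resolvent operators and then checking the compatibility conditions \eqref{CCondmonVEC} and Cauchy--Riemann equations \eqref{CRMMON} by direct differentiation; the two parts are completely symmetric, so I treat (I) in detail and sketch (II) at the end.

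Fix $A\in\boundOP(\mathcal{Y})$ and $s=u+\mathrm{J}v\in\rho_S(A)$, and abbreviate $R(u,v):=\Q_s(A)^{-1}$. The structural observation that drives everything is that $\Q_s(A)=A^2-2uA+(u^2+v^2)\id$ depends on $s$ only through $u$ and $v^2$, and hence so does $R$; in particular $R(u,-v)=R(u,v)$ and $R$ is jointly smooth in $(u,v)$ on the open, axially symmetric set $\rho_S(A)$ (the axial symmetry is Proposition \ref{AxSymSpec}). Writing $A-\bar s\,\id=(A-u\id)+v\mathrm{J}\id$ and unfolding Definition \ref{ResolventRegularAAA} gives
\[
S_L^{-1}(s,A)\;=\;-R(u,v)(A-u\id)\;-\;vR(u,v)\mathrm{J}\;=\;F_0(u,v)+F_1(u,v)\mathrm{J},
\]
with $F_0(u,v):=-R(u,v)(A-u\id)$ and $F_1(u,v):=-vR(u,v)$; the evenness of $R$ in $v$ together with the explicit factor $v$ in $F_1$ immediately yield the compatibility relations \eqref{CCondmonVEC}, so the right-slice form of Definition \ref{slice}(II) is in place.

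To verify \eqref{CRMMON} I will compute $\partial_u R=2R(A-u\id)R$ and $\partial_v R=-2vR^2$ from $\partial_u\Q_s(A)=2(u\id-A)$ and $\partial_v\Q_s(A)=2v\id$. Using that $R$ commutes with $A$ --- since it is a rational function of $A$ with coefficients in the real scalars $u, v^2$ --- I expect the first Cauchy--Riemann equation $\partial_u F_0-\partial_v F_1=0$ to collapse to the algebraic identity $\Q_s(A)=(A-u\id)^2+v^2\id$, equivalently $\id=R(A-u\id)^2+v^2 R$, which is immediate by expansion, while $\partial_v F_0+\partial_u F_1=0$ will follow from the commutation $R(A-u\id)=(A-u\id)R$ alone. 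Part (II) is then handled by the mirror decomposition $S_R^{-1}(s,A)=-(A-u\id)R(u,v)+\mathrm{J}(-vR(u,v))=G_0(u,v)+\mathrm{J}G_1(u,v)$ and the same computation read right-to-left. The main technical point is the first Cauchy--Riemann check, whose backbone is the commutation $RA=AR$ combined with $\Q_s(A)=(A-u\id)^2+v^2\id$; everything else --- smoothness, slice-function compatibility, and axial symmetry of the domain --- reduces to the single observation that $\Q_s(A)$ depends on $s$ only through $u$ and $v^2$.
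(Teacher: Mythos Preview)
Your proposal is correct and follows essentially the same route as the paper: both identify the same decomposition $S_L^{-1}(s,A)=F_0(u,v)+F_1(u,v)\mathrm{J}$ with $F_0=-\Q_s(A)^{-1}(A-u\id)$ and $F_1=-v\,\Q_s(A)^{-1}$, observe the compatibility conditions \eqref{CCondmonVEC} from the dependence on $(u,v^2)$, and then verify \eqref{CRMMON} by direct differentiation of the pseudo-resolvent. Your packaging via $\partial_u R=2R(A-u\id)R$, $\partial_v R=-2vR^2$ and the identity $\Q_s(A)=(A-u\id)^2+v^2\id$ is a slightly slicker bookkeeping of the same four partial derivatives the paper writes out explicitly, but there is no substantive difference in method.
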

\begin{proof}
We prove only (I), since the proof of (II) is similar. Let $s = u+ \mathrm{J}v \in\rho_S(A)$, then
\[
S_L^{-1}(s,A)= f_0(u,v) + f_1(u,v)\mathrm{J}
\]
with the $\mathcal{B}(\mathcal{Y})$-valued functions
\begin{align*}
f_0(u,v) &= -(A^2-2uA + (u^2+v^2)\id)^{-1}(A-u\id),\\
f_1(u,v) &= - (A^2-2uA+ (u^2+v^2)\id)^{-1}v.
\end{align*}
The functions $f_0$ and $f_1$ satisfy the compatibility condition \eqref{CCondmonVEC}, the function
$S_L^{-1}(s,A)$ is a $\mathcal{B}(\mathcal{Y})$-valued right slice function on $\rho_S(A)$.
We verify that the pair $(f_0,f_1)$ satisfies the Cauchy-Riemann equations \eqref{CRMMON}. We have:
\[
\begin{split}
\frac{\partial}{\partial u}(-Q_s(A)^{-1}(A-u\id))&= (Q_s(A)^{-2}(-2A +2u \id)(A-u\id))+Q_s(A)^{-1}\\
&=  Q_s(A)^{-2}[-2(A^2 - 2u A +u^2 \id)+ A^2 - 2u A +(u^2+v^2) \id]\\
&=  Q_s(A)^{-2}(-A^2 + 2u A +(-u^2+v^2)\id),
\end{split}
\]
\[
\begin{split}
\frac{\partial}{\partial v}(-Q_s(A)^{-1}v)&= Q_s(A)^{-2} (2v^2) - Q_s(A)^{-1}\\
&= Q_s(A)^{-2} (- A^2 +2 uA + (-u^2+v^2)\id)
\end{split}
\]
\[
\begin{split}
\frac{\partial}{\partial v}(-Q_s(A)^{-1}(A-u\id))&= Q_s(A)^{-2}(2v \id)(A-u\id)
\end{split}
\]
\[
\begin{split}
\frac{\partial}{\partial u}(-Q_s(A)^{-1}v)&= Q_s(A)^{-2} (-2 A+2u \id)v
\end{split}
\]
and the statement immediately follows.
\end{proof}

The following result on the invertibility of operators follows with standard arguments, so we provide a reference for the proof.
\begin{lemma}\label{inverso}
The set ${\rm Inv}(\mathcal{B}(\mathcal{Y}))$ of invertible elements in $\mathcal{B}(\mathcal{Y})$
 is an open set in the uniform operator topology on $\mathcal{B}(\mathcal{Y})$. If ${\rm Inv}(\mathcal{B}(\mathcal{Y}))$ contains an element
$A$, then it contains the ball
$$
B_{\delta}(A) := \left\{ C \in \mathcal{B}(\mathcal{Y})\ :\ \|A-C\| < \delta \right\},
$$
where $\delta = \left\| A^{-1} \right\|^{-1}$. If $C\in B(A)$, then inverse is given by the series
\begin{equation}\label{Bmenouno}
C^{-1}=A^{-1}\sum_{m = 0}^{+\infty}[(A-C)A^{-1}]^m.
\end{equation}
Furthermore, the map $A\mapsto A^{-1}$ is a homeomorphism
from ${\rm Inv}(\mathcal{B}(\mathcal{Y}))$ onto ${\rm Inv}(\mathcal{B}(\mathcal{Y}))$  in the uniform operator topology.
\end{lemma}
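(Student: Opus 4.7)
The plan is to reproduce the classical Neumann series argument, checking only that nothing in it uses commutativity or scalars beyond what the two-sided Banach algebra structure of $\mathcal{B}(\mathcal{Y})$ provides. Concretely, the preceding definition fixes that $\mathcal{B}(\mathcal{Y})$ is a two-sided unital Banach algebra over $\mathcal{F}$ with submultiplicative operator norm, and that is all the machinery required.

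First I would fix $A\in \mathrm{Inv}(\mathcal{B}(\mathcal{Y}))$, take $C$ with $\|A-C\|<\delta=\|A^{-1}\|^{-1}$, and factor on the right, writing
$$C=A-(A-C)=\bigl(\id-(A-C)A^{-1}\bigr)A.$$
Set $E:=(A-C)A^{-1}$; submultiplicativity gives $\|E\|\le \|A-C\|\|A^{-1}\|<1$. The geometric majorization $\sum_{m\ge 0}\|E^m\|\le \sum_{m\ge 0}\|E\|^m=(1-\|E\|)^{-1}$ shows that $S:=\sum_{m=0}^{+\infty}E^m$ converges absolutely in the operator norm, and a standard telescoping computation $(\id-E)\sum_{m=0}^{k}E^m=\id-E^{k+1}\to\id$ (and symmetrically on the other side) yields $(\id-E)^{-1}=S$. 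Consequently $C$ is invertible with
$$C^{-1}=A^{-1}(\id-E)^{-1}=A^{-1}\sum_{m=0}^{+\infty}\bigl[(A-C)A^{-1}\bigr]^{m},$$
which is exactly \eqref{Bmenouno}. This already proves that $B_{\delta}(A)\subseteq\mathrm{Inv}(\mathcal{B}(\mathcal{Y}))$, hence $\mathrm{Inv}(\mathcal{B}(\mathcal{Y}))$ is open in the uniform operator topology.

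For the homeomorphism assertion, the map $C\mapsto C^{-1}$ is clearly an involution of $\mathrm{Inv}(\mathcal{B}(\mathcal{Y}))$, so it suffices to prove continuity. Using $C^{-1}-A^{-1}=A^{-1}(A-C)C^{-1}$ (which follows directly from $A-C=A(C^{-1})^{-1}C^{-1}\!\cdot\! 0$-style manipulations, or equivalently by left-multiplying $C-A$ by $A^{-1}$ and right-multiplying by $C^{-1}$) together with the norm bound $\|C^{-1}\|\le \|A^{-1}\|(1-\|E\|)^{-1}$ extracted from the series above, I obtain
$$\|C^{-1}-A^{-1}\|\le \frac{\|A^{-1}\|^{2}\,\|A-C\|}{1-\|A^{-1}\|\,\|A-C\|},$$
which tends to $0$ as $C\to A$. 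Applying the same bound with the roles of $A$ and $C$ swapped, once $\|A-C\|$ is small enough, yields continuity of the inverse map in both directions.

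The argument is classical and contains no real obstacle; the only point worth verifying is that all steps (absolute convergence, telescoping, the identity for $C^{-1}-A^{-1}$) use nothing beyond associativity of the operator product, existence of the unit $\id$, and submultiplicativity of $\|\cdot\|_{\mathcal{B}(\mathcal{Y})}$, which are exactly what the preceding remarks guarantee. This is why the authors simply cite a reference instead of writing the details out.
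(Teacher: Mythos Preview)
Your argument is correct and is precisely the standard Neumann-series proof; the paper itself gives no details at all here and simply refers to Lemma~3.1.12 in \cite{6CKG}, which records exactly this computation. Your closing remark anticipating this is on the mark.
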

\begin{proof}
See Lemma 3.1.12 in \cite{6CKG}.
\end{proof}
The $S$-spectrum in $\mathcal{W}_{\mathcal F}$ has the standard properties of the spectrum of a complex linear operator in particular, it is compact:

\begin{theorem}[Compactness of the $S$-spectrum]\label{PropSpec}
Let $A\in \mathcal{B}(\mathcal{Y})$.
The $S$-spectrum $\sigma_S(A)$ of $A$ is a nonempty and compact set contained in the closed ball  $\overline{B_{\|A\|}(0)}$ with center at $0$ and radius $\|A\|$.
\end{theorem}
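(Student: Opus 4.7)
The plan is to establish three things in order: the inclusion $\sigma_S(A) \subseteq \overline{B_{\|A\|}(0)}$, the closedness of $\sigma_S(A)$ (which together give compactness), and nonemptiness. For the inclusion I would invoke Theorem~\ref{LemmaQInverseSSS} directly: whenever $s \in \mathcal{W}_{\mathcal F}$ satisfies $|s| > \|A\|$, the pseudo-resolvent series converges in the operator norm and $\Q_s(A)$ is invertible, so $s \in \rho_S(A)$. This simultaneously yields the inclusion in the closed ball and boundedness of $\sigma_S(A)$.

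For closedness I would show that $\rho_S(A)$ is open. The map $s \mapsto \Q_s(A) = A^2 - 2\Re(s)A + |s|^2\id$ is continuous from $\mathcal{W}_{\mathcal F}$ into $\mathcal{B}(\mathcal{Y})$ because $s \mapsto \Re(s)$ and $s \mapsto |s|^2$ are continuous real-valued functions on the weak cone. Lemma~\ref{inverso} says the invertible operators form an open subset of $\mathcal{B}(\mathcal{Y})$ in the uniform topology, so its preimage $\rho_S(A)$ under the above continuous map is open. Combined with the boundedness step this gives compactness.

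For nonemptiness I would argue by contradiction. Suppose $\sigma_S(A) = \emptyset$; then $s \mapsto S_L^{-1}(s,A)$ is right slice hyperholomorphic on the whole weak cone by Lemma~\ref{ResolventRegularA}, and the geometric estimate obtained from Theorem~\ref{leftrightSSS} gives $\|S_L^{-1}(s,A)\| \leq (|s|-\|A\|)^{-1}$ for $|s| > \|A\|$, so the function is globally bounded on $\mathcal{W}_{\mathcal F}$ and vanishes at infinity. Fix any $\mathrm{J}\in\mathbb{S}$; by the Cauchy--Riemann equations \eqref{CRMMON}, restricting to $\mathbb{C}_{\mathrm{J}}$ yields a bounded entire $\mathcal{B}(\mathcal{Y})$-valued holomorphic function of the complex variable $s\in\mathbb{C}_{\mathrm{J}}$. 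The Banach-space valued Liouville theorem --- obtained from the scalar Liouville theorem via Hahn--Banach on $\mathcal{B}(\mathcal{Y})$ --- forces this restriction to vanish identically on $\mathbb{C}_{\mathrm{J}}$. Applying the Representation Formula of Theorem~\ref{formulamon}(II) to the right slice hyperholomorphic function $S_L^{-1}(\cdot,A)$ then propagates the vanishing to all of $\mathcal{W}_{\mathcal F}$, giving $S_L^{-1}(\cdot,A)\equiv 0$. But the leading term of the series in Theorem~\ref{leftrightSSS} yields $s\,S_L^{-1}(s,A) \to \id$ as $|s|\to\infty$, contradicting identical vanishing.

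The main obstacle I anticipate is the Liouville step: one must lift the scalar Liouville theorem to the operator-valued, slice-hyperholomorphic setting on a single plane $\mathbb{C}_{\mathrm{J}}$ and then propagate the resulting vanishing across the family of imaginary units via the Representation Formula. Once this is in hand, the contradiction via the leading term of the $S$-resolvent series closes the argument, and the compactness half of the theorem follows by routine use of the pseudo-resolvent series of Theorem~\ref{LemmaQInverseSSS} and the invertibility principle of Lemma~\ref{inverso}.
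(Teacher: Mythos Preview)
Your argument for boundedness (via Theorem~\ref{LemmaQInverseSSS}) and closedness (continuity of $s\mapsto\Q_s(A)$ combined with Lemma~\ref{inverso}) matches the paper's proof exactly.

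For nonemptiness your Liouville-type argument is correct but takes a different route from the paper. The paper instead observes that for $r>\|A\|$, termwise integration of the $S$-resolvent series over $\partial(B_r(0)\cap\mathbb{C}_{\mathrm J})$ yields
\[
\int_{\partial(B_r(0)\cap\mathbb{C}_{\mathrm J})} S_L^{-1}(s,A)\,ds_{\mathrm J} = 2\pi\,\id,
\]
whereas if $\sigma_S(A)=\emptyset$ the integrand would be right slice hyperholomorphic on $\overline{B_r(0)}$ and Cauchy's integral theorem would force this integral to vanish---an immediate contradiction. This is more self-contained: it stays entirely within the slice Cauchy machinery already set up and avoids Hahn--Banach, the Banach-space Liouville theorem, and the propagation step via the Representation Formula. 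Your approach is the classical spectral-nonemptiness argument transplanted to the slice setting and is perfectly valid; two small points are worth tightening. First, the claim ``globally bounded on $\mathcal W_{\mathcal F}$'' is slightly understated: the geometric estimate only controls $|s|>\|A\|$, and on the complementary closed disk in $\mathbb C_{\mathrm J}$ you need continuity of the resolvent (immediate under the hypothesis $\sigma_S(A)=\emptyset$) plus compactness. Second, the final limit should read $S_L^{-1}(s,A)\,s\to\id$ rather than $s\,S_L^{-1}(s,A)\to\id$, since $s$ need not commute with $A$; the contradiction is of course unaffected.
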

\begin{proof}
For $r$ such that $r>\|A\|$, the series  $S_L^{-1}(s,A) = \sum_{m=0}^{+\infty}A^ms^{-m-1}$ converges uniformly on the boundary $\partial B_r(0)$ of the ball $B_r(0)$ centered at $0$ with radius $r$. Thus, if $\mathrm{J}\in \mathbb{S}$, we have
\begin{align}
\label{TABC}  \int_{\partial(B_r(0)\cap\mathbb{C}_\mathrm{J})} S_L^{-1}(s,A) \,ds_\mathrm{J} = \sum_{m=0}^{+\infty}A^m \int_{\partial(B_r(0)\cap\mathbb{C}_\mathrm{J})}s^{-m-1}\,ds_\mathrm{J} = 2\pi\,\id,
 \end{align}
since the integral $ \int_{\partial(B_r(0)\cap\mathbb{C}_\mathrm{J})}s^{-m-1}ds_\mathrm{J}=2\pi$  if $m=0$ while it vanishes for $m\in\mathbb N$.
If we assume, by absurd, that $\overline{B_{r}(0)}\subseteq\rho_{S}(T)$,
then $S_{L}^{-1}(s,A)$ is right slice hyperholomorphic on $\overline{B_{r}(0)}$ by Lemma~\ref{ResolventRegularA} and by
 Cauchy's integral theorem the integral in \eqref{TABC} vanishes, which is a contradiction.
   Thus  $\overline{B_{r}(0)} \not\subset \rho_{S}(A)$ and so $\emptyset \neq \sigma_{S}(A)\cap\overline{B_{r}(0)}$ and $\sigma_{S}(A)$ is not empty.

The set $\mathcal{B}(\mathcal{Y})$ can be considered as a real Banach algebra, by restricting the multiplication by a scalar to $\mathbb{R}$. The set ${\rm Inv}(\mathcal{B}(\mathcal{Y}))$ of  invertible elements of this real Banach algebra is open by Lemma \ref{inverso}. The map
$$
\tau:s\mapsto \Q_{s}(A)$$
 is a $\mathcal{B}(\mathcal{Y})$-valued continuous function so that the set
 $$
 \rho_S(A) = \tau^{-1}({\rm Inv}(\mathcal{B}(\mathcal{Y})))
 $$
  is open in $\mathcal{W}_{\mathcal{F}}$. We conclude that  $\sigma_S(A)$ is  closed.

Finally, Lemma \ref{LemmaQInverseSSS} implies $|s| \leq \|A\|$ for any  $s\in\sigma_S(A)$. Thus, $\sigma_S(A)$ is a closed subset of the compact set $\overline{B_{\|A\|}(0)}$ and therefore compact itself.

\end{proof}
Each of the two $S$-resolvents satisfy a suitable equation as shown in the next result.
\begin{theorem}\label{312}
Let $A\in\mathcal{B}(\mathcal{Y})$ and let $s\in\rho_S(A)$. The left $S$-resolvent operator satisfies the {\em left $S$-resolvent equation}

\begin{equation}\label{LeftSREQSS}
S_L^{-1}(s,A)s - AS_L^{-1}(s,A) = \id
\end{equation}
and the right $S$-resolvent operator satisfies the {\em right $S$-resolvent equation}
\begin{equation}\label{RightSREQ}
sS_R^{-1}(s,A) - S_R^{-1}(s,A)A = \id.
\end{equation}
\end{theorem}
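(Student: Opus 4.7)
The plan is to verify both resolvent equations by direct algebraic manipulation, using two essential facts: (a) for $s\in\mathcal{W}_{\mathcal{F}}$, both $s+\bar s = 2\Re(s)$ and $s\bar s = |s|^2$ are real, so they commute with $A$; and consequently (b) $\Q_{s}(A)$ is a polynomial in $A$ with real coefficients, whence $A\Q_{s}(A) = \Q_{s}(A)A$ and therefore $\Q_{s}(A)^{-1}A = A\Q_{s}(A)^{-1}$ on $\rho_S(A)$.

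For part (I), I would substitute the definition $S_L^{-1}(s,A) = -\Q_{s}(A)^{-1}(A-\bar s\,\id)$ on both sides:
\begin{equation*}
S_L^{-1}(s,A)s - AS_L^{-1}(s,A)
= -\Q_{s}(A)^{-1}(A-\bar s\,\id)s + A\Q_{s}(A)^{-1}(A-\bar s\,\id).
\end{equation*}
Using the commutation from (b) to pull $A$ past $\Q_{s}(A)^{-1}$ in the second term, factor out $\Q_{s}(A)^{-1}$ on the left:
\begin{equation*}
= \Q_{s}(A)^{-1}\bigl[A(A-\bar s\,\id) - (A-\bar s\,\id)s\bigr]
= \Q_{s}(A)^{-1}\bigl[A^2 - A\bar s - As + \bar s s\,\id\bigr].
\end{equation*}
Now invoke (a): $A\bar s + As = A(s+\bar s) = 2\Re(s)A$ and $\bar s s = |s|^2$, so the bracket equals $A^2 - 2\Re(s)A + |s|^2\,\id = \Q_{s}(A)$, which multiplied by $\Q_{s}(A)^{-1}$ gives $\id$. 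This establishes \eqref{LeftSREQSS}.

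For part (II), the argument is symmetric: starting from $S_R^{-1}(s,A) = -(A-\bar s\,\id)\Q_{s}(A)^{-1}$, write
\begin{equation*}
sS_R^{-1}(s,A) - S_R^{-1}(s,A)A
= \bigl[-s(A-\bar s\,\id) + (A-\bar s\,\id)A\bigr]\Q_{s}(A)^{-1},
\end{equation*}
where again the commutation of $A$ with $\Q_{s}(A)^{-1}$ is used to factor $\Q_{s}(A)^{-1}$ out on the right. Expanding and using $s+\bar s\in\mathbb{R}$ and $s\bar s\in\mathbb{R}$ as above collapses the bracket to $\Q_{s}(A)$, producing $\id$.

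The only conceptual hurdle is bookkeeping the noncommutativity of scalar and operator multiplications: in particular, making sure the manipulation $A\bar s + As = A(s+\bar s)$ is legitimate. This is precisely where the hypothesis $s\in\mathcal{W}_{\mathcal{F}}$ enters decisively, via the identity $s+\bar s\in\mathbb R$ guaranteed by Lemma~\ref{squadequat}. Once this is observed, the whole proof reduces to elementary rearrangement, with no analytic subtleties.
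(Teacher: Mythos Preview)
Your proof is correct and is exactly the ``simple computation'' the paper alludes to (the paper's own proof is one sentence: ``It is a simple computation, as in the case of paravector operators''). The only minor quibble is your citation: the fact that $s+\bar s\in\mathbb{R}$ and $s\bar s\in\mathbb{R}$ for $s\in\mathcal{W}_{\mathcal{F}}$ is established in the Remark preceding Lemma~\ref{squadequat}, not in Lemma~\ref{squadequat} itself (which records the quadratic identity $s^2-2\Re(s)s+|s|^2=0$).
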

\begin{proof}
It is a simple computation, as in the case of paravector operators.
\end{proof}

The left and the right $S$-resolvent equations cannot be considered the generalizations of the classical resolvent equation.
The $S$-resolvent equation involves both the $S$-resolvent operators and the Cauchy kernels.
\begin{theorem}[The $S$-resolvent equation]\label{SREQSSS}
Let $A\in\mathcal{B}(\mathcal{Y})$ and let $s, q\in\rho_S(A)$ with $q\notin[s]$.
 Then the equation
\begin{multline}\label{SREQ1SSS}
S_R^{-1}(s,A)S_L^{-1}(q,A)=\left[\left(S_R^{-1}(s,A)-S_L^{-1}(q,A)\right)q
-\overline{s}\left(S_R^{-1}(s,A)-S_L^{-1}(q,A)\right)\right]\Q_{s}(q)^{-1}
\end{multline}
holds true. Equivalently, it can also be written as
\begin{multline}\label{SREQ2SSS}
S_R^{-1}(s,A)S_L^{-1}(q,A)=\Q_{q}(s)^{-1}
\left[\left(S_L^{-1}(q,A) - S_R^{-1}(s,A)\right)\overline{q}-s\left(S_L^{-1}(q,A) - S_R^{-1}(s,A)\right)
 \right],
\end{multline}
where we have set $\Q_{s}(q)^{-1}:=(q^2-2\Re(s)q+|s|^2)^{-1}$.
\end{theorem}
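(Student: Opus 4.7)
The plan is to extract both forms of the $S$-resolvent equation from a single quasi-commutation identity relating $S_L^{-1}(q,A)$ and $S_R^{-1}(s,A)$, and then to build up the polynomials $\Q_s(q)$ and $\Q_q(s)$ by multiplying that identity on the appropriate side. Set $P := S_R^{-1}(s,A) S_L^{-1}(q,A)$ and $R := S_R^{-1}(s,A) - S_L^{-1}(q,A)$. Rewriting the two equations of Theorem~\ref{312} as $S_L^{-1}(q,A) q = A S_L^{-1}(q,A) + \id$ and $S_R^{-1}(s,A) A = s S_R^{-1}(s,A) - \id$, I left-multiply the first by $S_R^{-1}(s,A)$ and substitute the second into the resulting middle term $S_R^{-1}(s,A) A S_L^{-1}(q,A)$. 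After cancellation this produces the key identity
\begin{equation*}
P q - s P = R.
\end{equation*}

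To derive \eqref{SREQ1SSS}, I right-multiply this identity by $q$ to obtain $P q^2 - s P q = R q$, and separately left-multiply it by $\overline{s}$ to obtain $\overline{s} P q - |s|^2 P = \overline{s} R$ (using $\overline{s} s = |s|^2 \in \mathbb{R}$). Subtracting the second from the first, and using that $s + \overline{s} = 2\Re(s)$ is real and hence commutes with both $P$ and $q$, the left-hand side collapses to $P(q^2 - 2\Re(s) q + |s|^2) = P \Q_s(q)$. The hypothesis $q \notin [s]$ together with Lemma~\ref{squadequat} ensures that $\Q_s(q) \in \mathcal{F}$ is nonzero, hence invertible, and right-multiplying by $\Q_s(q)^{-1}$ yields \eqref{SREQ1SSS}. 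The equivalent form \eqref{SREQ2SSS} is obtained by the mirror manipulation: left-multiply $P q - s P = R$ by $s$ and right-multiply it by $\overline{q}$, and then combine the two resulting equations. Now the real symmetric combinations $q + \overline{q} = 2\Re(q)$ and $q \overline{q} = |q|^2$ commute through $P$ and $s$, producing $(s^2 - 2\Re(q) s + |q|^2) P = \Q_q(s) P$ on the left; left-multiplying by $\Q_q(s)^{-1}$ (nonzero by the same reasoning) gives the desired identity.

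The only subtle point is the noncommutativity of $s$ and $q$ with each other and with the operators $P$ and $R$. Everything goes through cleanly because the symmetric combinations $s+\overline{s}$, $s\overline{s}$, $q+\overline{q}$, $q\overline{q}$ are all real and therefore move freely past every factor; this is precisely the algebraic mechanism that allows $\Q_s(q)$ and $\Q_q(s)$ to be factored out and is the same phenomenon that underpins the compatibility of the $S$-functional calculus with slice hyperholomorphy. Once this observation is in place, no further obstruction arises and the argument is a short manipulation, closely paralleling the quaternionic and paravector derivations in \cite{6CKG}.
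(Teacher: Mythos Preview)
Your argument is correct and follows essentially the same route as the paper's: both extract the quasi-commutation identity $Pq - sP = R$ from the left and right $S$-resolvent equations of Theorem~\ref{312} and then build $P\,\Q_s(q)$ (resp.\ $\Q_q(s)\,P$) by suitable one-sided multiplications, the only cosmetic difference being that the paper carries an extra $(s^2 - 2\Re(s)s + |s|^2)P$ term and kills it via Lemma~\ref{squadequat}, whereas your subtraction avoids that term from the outset. One small remark: your appeal to Lemma~\ref{squadequat} for the invertibility of $\Q_s(q)$ is slightly misplaced---that lemma gives $\Q_s(s)=0$, not the converse you need; the correct justification is that $\Q_s(q)$ lies in the field $\mathbb{C}_{J_q}$ (being a real-coefficient polynomial in $q$) and vanishes there precisely on $[s]\cap\mathbb{C}_{J_q}$, so $q\notin[s]$ makes it a nonzero, hence invertible, element of that complex plane.
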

\begin{proof}
The proof follows in much the same manner as the proof in the case of quaternionic or paravector operators.
Indeed, with some manipulations, using  the left and the right $S$-resolvent (\ref{LeftSREQSS}),
we get
\begin{align*}
S_R^{-1}(s,A)S_L^{-1}(q,A)(q^2-2s_0q+|s|^2)
=&(s^2-2s_0s+|s|^2)S_R^{-1}(s,A)S_L^{-1}(q,A)
\\
&+[S_R^{-1}(s,A)-S_L^{-1}(q,A)]q-\overline{s}[S_R^{-1}(s,A)-S_L^{-1}(q,A)]
\end{align*}
and since $s^2-2s_0s+|s|^2=0$, thanks to Lemma \ref{squadequat} we obtain \eqref{SREQ1SSS}.
With similar computations we can show that also \eqref{SREQ2SSS} holds.
For more details see the proof of Theorem 3.1.15 in \cite{6CKG}.
\end{proof}

We now define the $S$-functional calculus starting with the case of polynomials of $A$.

\begin{lemma}\label{PolyTABSSS}
Let $A\in\boundOP(\mathcal{Y})$, let $m\in  \mathbb{N}\cup\{0\}$ and let $U \subseteq \mathcal{W}_{\mathcal{F}}$
be a bounded slice Cauchy domain with $\sigma_{S}(A)\subset U$. For any imaginary unit $\mathrm{J}\in\mathbb{S}$ we set $ds_\mathrm{J}=ds(-\mathrm{J})$. Then we have
\begin{align*}
A^m =& \frac{1}{2\pi}\int_{\partial(U\cap\mathbb{C}_\mathrm{J})}S_L^{-1}(s,A)\,ds_\mathrm{J}\,s^m
\end{align*}
and also
\begin{align*}
A^m =& \frac{1}{2\pi}\int_{\partial(U\cap\mathbb{C}_\mathrm{J})}s^m\,ds_\mathrm{J}\,S_R^{-1}(s,A).
 \end{align*}
\end{lemma}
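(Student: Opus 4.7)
The plan is to reduce to a ball of radius exceeding $\|A\|$ and then expand the $S$-resolvent as a power series, integrating term by term.

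First, I would argue that the integrals in question are independent of the particular bounded slice Cauchy domain $U \supset \sigma_S(A)$ and of the imaginary unit $\mathrm{J}\in\mathbb{S}$. By Lemma~\ref{ResolventRegularA}, $S_L^{-1}(s,A)$ is right slice hyperholomorphic in $s$ on $\rho_S(A)$, and $s\mapsto s^m$ is trivially left slice hyperholomorphic on all of $\mathcal{W}_{\mathcal{F}}$. Hence the integrand $S_L^{-1}(s,A)\,ds_\mathrm{J}\,s^m$ is the ``correct'' slice hyperholomorphic object to apply Cauchy's theorem to in any complex plane $\mathbb{C}_\mathrm{J}$; the same arguments that prove Theorem~\ref{abstSCalc} (and the independence statements there) yield that the integral is the same on $\partial(U\cap\mathbb{C}_\mathrm{J})$ as on $\partial(B_r(0)\cap\mathbb{C}_\mathrm{J})$ for any $r > \|A\|$, and is independent of $\mathrm{J}$.

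Next, I would pick $r > \|A\|$ and work on $\partial B_r(0)$. By Lemma~\ref{Srespowrser}, the series $S_L^{-1}(s,A) = \sum_{n=0}^{+\infty} A^n s^{-n-1}$ converges uniformly in the operator norm on $\partial(B_r(0)\cap\mathbb{C}_\mathrm{J})$, since $|s|=r>\|A\|$ there. Therefore one may interchange sum and integral:
\[
\frac{1}{2\pi}\int_{\partial(B_r(0)\cap\mathbb{C}_\mathrm{J})} S_L^{-1}(s,A)\,ds_\mathrm{J}\,s^m
= \sum_{n=0}^{+\infty} A^n \cdot \frac{1}{2\pi}\int_{\partial(B_r(0)\cap\mathbb{C}_\mathrm{J})} s^{m-n-1}\,ds_\mathrm{J}.
\]
Each remaining integral is a standard complex-plane contour integral in $\mathbb{C}_\mathrm{J}\cong\mathbb{C}$; it equals $2\pi$ when $m-n-1=-1$, i.e.\ $n=m$, and vanishes otherwise (the same elementary fact already used in the proof of Theorem~\ref{PropSpec}). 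Only the $n=m$ term survives, giving $A^m$, as required.

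For the right-sided identity I would repeat the argument using the right $S$-resolvent series $\sum_{n=0}^{+\infty} s^{-n-1} A^n$ of Definition~\ref{abstResSeries}, which converges uniformly on $\partial B_r(0)$ by Lemma~\ref{Srespowrser}; the scalar integral is identical, and the power of $A$ is now placed on the right. The main obstacle, and really the only nonroutine step, is the domain-independence assertion: one needs the analogue for operator-valued integrands of the slice Cauchy theorem that underlies Theorem~\ref{abstSCalc}. This is handled exactly as there, by writing $S_L^{-1}(s,A)$ through its component functions $f_0,f_1$ satisfying \eqref{CCondmonVEC}--\eqref{CRMMON}, applying the classical Cauchy theorem on $\mathbb{C}_\mathrm{J}$, and invoking the Representation Formula (Theorem~\ref{formulamon}) to remove dependence on $\mathrm{J}$.
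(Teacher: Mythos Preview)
Your proposal is correct and follows essentially the same route as the paper: reduce to a ball $B_r(0)$ with $r>\|A\|$, expand the $S$-resolvent as the uniformly convergent power series of Theorem~\ref{leftrightSSS}, integrate term by term using the elementary $\cc_\mathrm{J}$-contour integrals, and handle a general $U$ via Cauchy's integral theorem on the annular region between $U$ and the ball. The only cosmetic difference is that the paper computes on the ball first and then transfers to $U$, whereas you argue domain independence first; also note that the closed-form identity $S_L^{-1}(s,A)=\sum_n A^n s^{-n-1}$ you need comes from Theorem~\ref{leftrightSSS}, while Lemma~\ref{Srespowrser} supplies only the convergence.
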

\begin{proof}
In the case  $U$ is the ball $B_r(0)$ in $\mathcal{W}_{\mathcal{F}}$ with center at $0$ and radius $r>\|A\|$, then $S_L^{-1}(s,A) = \sum_{n=0}^{+\infty}A^ns^{-n-1}$ for any
$s\in\partial B_r(0)$ by Theorem~\ref{leftrightSSS} and the series converges uniformly on $\partial B_r(0)$. So we conclude that
\begin{align*}
&\frac{1}{2\pi}\int_{\partial(B_r(0)\cap\mathbb{C}_\mathrm{J})}S_L^{-1}(s,A)\, ds_\mathrm{J}\, s^m = A^m.
\end{align*}
In the case when $U$ is an arbitrary bounded slice Cauchy domain containing $\sigma_{S}(A)$, there exists $r>0$ such that the ball  $B_r(0)$ contains $\overline{U}$. The left $S$-resolvent $S_L^{-1}(s,A)$ is then right slice hyperholomorphic and the monomial $s^m$ is  left slice hyperholomorphic on the bounded slice Cauchy domain $B_{r}(0)\setminus U$. Cauchy's integral theorem implies
\begin{align*} &\frac{1}{2\pi}\int_{\partial(B_r(0)\cap\mathbb{C}_\mathrm{J})}S_L^{-1}(s,A)\,ds_\mathrm{J}\,s^m
- \frac{1}{2\pi}\int_{\partial(U\cap\mathbb{C}_\mathrm{J})}S_L^{-1}(s,A)\,ds_\mathrm{J}\,s^m
\\
=& \frac{1}{2\pi}\int_{\partial((B_r(0)\setminus U)\cap\mathbb{C}_\mathrm{J})}S_L^{-1}(s,A)\,ds_\mathrm{J}\,s^m = 0
\end{align*}
and so
\[
\frac{1}{2\pi}\int_{\partial(U\cap\mathbb{C}_\mathrm{J})}S_L^{-1}(s,A)\,ds_\mathrm{J}\,s^m =
\frac{1}{2\pi}\int_{\partial(B_r(0)\cap\mathbb{C}_\mathrm{J})}S_L^{-1}(s,A)\,ds_\mathrm{J}\,s^m = A^m.
\]

The second formula in the statement, involving $S_R^{-1}(s,A)$ follows similarly.

\end{proof}

\begin{theorem}\label{TGenPolySSS}
 Let $A\in\boundOP(\mathcal{Y})$, let $U$ be a bounded slice Cauchy domain that contains $\sigma_{S}(A)$ and let $\mathrm{J}\in\mathbb{S}$. For any left slice hyperholomorphic polynomial
 $P(x) = \sum_{\ell=0}^M x^{\ell}a_{\ell}$ with $a_{\ell}\in\mathcal{F}$,
 we set
 $
 P(A) = \sum_{\ell=0}^MA^{\ell}a_{\ell}.
 $
  Then
\begin{equation}\label{GenPolyL}
P(A) = \frac{1}{2\pi}\int_{\partial(U\cap\mathbb{C}_\mathrm{J})} S_L^{-1}(s,A)\,ds_\mathrm{J}\, P(s).
\end{equation}
 Similarly, we set $P(A) = \sum_{\ell=0}^Ma_{\ell}A^{\ell}$ for any right slice hyperholomorphic polynomial
 $P(x) = \sum_{\ell=0}^M a_{\ell}x^{\ell}$ with $a_{\ell}\in\mathcal{F}$.
 Then
\begin{equation}
\label{GenPolyR} P(A) = \frac{1}{2\pi}\int_{\partial(U\cap\mathbb{C}_\mathrm{J})}P(s)\,ds_\mathrm{J}\, S_R^{-1}(s,A).
\end{equation}
In particular, the operators in \eqref{GenPolyL} and \eqref{GenPolyR}
coincide for any intrinsic polynomial
$P(x) = \sum_{\ell=0}^{M}x^{\ell}a_{\ell}$, i.e., when the coefficients $a_{\ell}\in\mathbb{R}$.
\end{theorem}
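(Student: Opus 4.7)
The plan is to reduce the statement to the monomial case already established in Lemma \ref{PolyTABSSS} by exploiting linearity of the contour integral and the two-sided module structure of $\boundOP(\mathcal{Y})$.

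For the left formula, I would write $P(s) = \sum_{\ell=0}^{M} s^{\ell} a_{\ell}$ inside the integrand and split the integral as a finite sum
\[
\frac{1}{2\pi}\int_{\partial(U\cap\mathbb{C}_\mathrm{J})} S_L^{-1}(s,A)\,ds_\mathrm{J}\, P(s) = \sum_{\ell=0}^{M} \frac{1}{2\pi}\int_{\partial(U\cap\mathbb{C}_\mathrm{J})} S_L^{-1}(s,A)\,ds_\mathrm{J}\, s^{\ell}\, a_{\ell}.
\]
Since each $a_{\ell}\in\mathcal{F}$ is constant in $s$, the constant can be pulled out on the right of each integral; this uses nothing more than the definition of the integral as a limit of Riemann sums together with the right-module structure of $\boundOP(\mathcal{Y})$ over $\mathcal{F}$ and its continuity (with module constant $C=1$). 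Applying Lemma \ref{PolyTABSSS} to each monomial integral yields $A^{\ell}a_{\ell}$, and summing over $\ell$ gives exactly $P(A) = \sum_{\ell=0}^{M} A^{\ell} a_{\ell}$, which is \eqref{GenPolyL}.

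The right formula \eqref{GenPolyR} is proved by the mirrored argument: write $P(s) = \sum_{\ell=0}^{M} a_{\ell} s^{\ell}$, split the integral into $M+1$ pieces, pull the constant $a_{\ell}$ out on the left (using the left-module structure), and apply the second identity of Lemma \ref{PolyTABSSS}.

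Finally, for the intrinsic case where each $a_{\ell}\in\mathbb{R}$, I would observe that real scalars commute with every element of $\boundOP(\mathcal{Y})$ and, in particular, with $A$ and with $s$. Hence $x^{\ell}a_{\ell} = a_{\ell} x^{\ell}$ at the level of the polynomial, so $P$ is both left and right slice hyperholomorphic, and both definitions of $P(A)$ give $\sum_{\ell} A^{\ell} a_{\ell} = \sum_{\ell} a_{\ell} A^{\ell}$. Thus \eqref{GenPolyL} and \eqref{GenPolyR} produce the same operator. I do not anticipate a real obstacle here; the only subtle point is the legitimacy of pulling an $\mathcal{F}$-valued constant through an $\boundOP(\mathcal{Y})$-valued contour integral, but this is standard once one observes that Riemann sums respect one-sided multiplication by constants and that the module multiplication is continuous in the norm.
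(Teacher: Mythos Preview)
Your proof is correct and follows exactly the approach the paper takes: the paper simply states that the theorem is a direct consequence of Lemma~\ref{PolyTABSSS}, and your argument spells out precisely that consequence via linearity and the module structure. There is nothing to add.
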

\begin{proof}
It is a direct consequence of  Lemma \ref{PolyTABSSS}.
\end{proof}

The $S$-functional calculus is based, more in general, on functions that are slice hyperholomorphic
 on the $S$-spectrum of an operator $A$.
To define the calculus in its full generality we need some more notations.
\begin{definition}
Let $A\in\boundOP(\mathcal{Y})$.
We denote by $\mathcal{SH}_L(\sigma_{S}(A))$,
$\mathcal{SH}_R(\sigma_{S}(A))$
and $\mathcal{N}(\sigma_{S}(A))$,
the set of all  left, right and intrinsic  slice hyperholomorphic  functions
with $\sigma_{S}(A)\subset \dom(f)$  and $\dom(f)$ is the domain of the function $f$.
\end{definition}

\begin{definition}[The abstract formulation $S$-functional calculus]\label{SCalc}
Let $A\in\boundOP(\mathcal{Y})$. For any imaginary unit $\mathrm{J}\in\mathbb{S}$ we set $ds_\mathrm{J}=ds(-\mathrm{J})$ and we define
\begin{equation}\label{abstSCalcLSSS}
f(A) := \frac{1}{2\pi}\int_{\partial(U\cap\mathbb{C}_\mathrm{J})}S_L^{-1}(s,A)\,ds_\mathrm{J}\,f(s),
 \ \ {\rm for \ any} \ \  f\in\mathcal{SH}_L(\sigma_{S}(A)),
\end{equation}
and
\begin{equation}\label{abstSCalcRSSS}
f(A) := \frac{1}{2\pi}\int_{\partial(U\cap\mathbb{C}_\mathrm{J})}f(s)\,ds_\mathrm{J}\,S_R^{-1}(s,A),
 \ \ {\rm for \ any} \ \  f\in\mathcal{SH}_R(\sigma_{S}(A)),
 \end{equation}
where $U\subset\mathcal{W}_{\mathcal{F}}$ is a bounded Cauchy domain such that
$\sigma_{S}(A)\subset U$ and $\overline{U}\subset\dom(f)$.
\end{definition}

\begin{definition}[Universality property]
The {\em universality property of
the $S$-functional calculus} is the formulation of the $S$-functional calculus
with respect to the algebra of operators
$\mathcal{B}(\mathcal{Y})$
and for slice hyperholomorphic functions associated with an algebra satisfying Assumption \ref{ASS}.
\end{definition}

The abstract formulation of the $S$-functional calculus is well-defined because
the integrals (\ref{abstSCalcLSSS}) and (\ref{abstSCalcRSSS}) do not depend on the open set $U$ that contain the $S$-spectrum and
 on  $\mathrm{J}\in\mathbb{S}$.

\begin{theorem}\label{OperatorIntsSSS}
Let $A\in\boundOP(\mathcal{Y})$. For any $f\in\mathcal{SH}_L(\sigma_{S}(A))$, the integral in \eqref{abstSCalcLSSS} defining the operator $f(A)$ is independent of the choice of the slice Cauchy domain $U$ and of the imaginary unit $\mathrm{J}\in\mathbb{S}$. Similarly, for any  $f\in\mathcal{SH}_R(\sigma_{S}(A))$, the integral in \eqref{abstSCalcRSSS} that defines the operator $f(A)$ is also independent of the choice of $U$ and $\mathrm{J}\in\mathbb{S}$.
\end{theorem}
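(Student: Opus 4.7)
The plan is to establish the two independence claims separately, treating only the left slice hyperholomorphic case, since the right case is entirely analogous. Throughout, fix $A \in \boundOP(\mathcal{Y})$ and $f \in \lhol(\sigma_S(A))$, and let $V := \dom(f) \cap \rho_S(A)$; this is an axially symmetric open subset of $\mathcal{W}_{\mathcal{F}}$ on which both $f$ (left slice hyperholomorphic by hypothesis) and $s \mapsto S_L^{-1}(s,A)$ (right slice hyperholomorphic by Lemma \ref{ResolventRegularA}) are defined.

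First I would prove independence of the slice Cauchy domain $U$. The key tool is the Cauchy-vanishing identity
\[
\int_{\partial(\Omega \cap \mathbb{C}_\mathrm{J})} S_L^{-1}(s,A)\, ds_\mathrm{J}\, f(s) = 0
\]
for any bounded slice Cauchy domain $\Omega$ with $\overline{\Omega} \subset V$ and any $\mathrm{J}\in\mathbb{S}$. On the slice $\mathbb{C}_\mathrm{J}$, parameterizing $s = u + \mathrm{J}v$ we have $ds_\mathrm{J} = dv - \mathrm{J}\,du$, so the integrand is a $\boundOP(\mathcal{Y})$-valued $1$-form. A direct calculation using the Cauchy--Riemann equations \eqref{CRMMON} for the components of $S_L^{-1}(\cdot,A)$ (as a right slice function) and of $f$ (as a left slice function) shows that this $1$-form is closed on $V \cap \mathbb{C}_\mathrm{J}$, so Stokes's theorem yields the vanishing. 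Given two admissible domains $U_1,U_2$, one then picks a third slice Cauchy domain $U_0$ with $\sigma_S(A)\subset U_0$ and $\overline{U_0}\subset U_1 \cap U_2$; applying the vanishing identity to each $U_i \setminus \overline{U_0}$ delivers equality of the integrals over $\partial(U_i \cap \mathbb{C}_\mathrm{J})$ and $\partial(U_0 \cap \mathbb{C}_\mathrm{J})$ for $i=1,2$, hence over $\partial(U_1 \cap \mathbb{C}_\mathrm{J})$ and $\partial(U_2 \cap \mathbb{C}_\mathrm{J})$.

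Next I would prove independence of $\mathrm{J}\in\mathbb{S}$, which is the more delicate step. Given two imaginary units $\mathrm{J}_1, \mathrm{J}_2$, step one allows us to fix a common admissible slice Cauchy domain $U$. I would apply the Representation Formula (Theorem \ref{formulamon}) twice: once to rewrite each value $f(u+\mathrm{J}_1 v)$ appearing in the integral over $\partial(U\cap\mathbb{C}_{\mathrm{J}_1})$ as a linear combination of $f(u+\mathrm{J}_2 v)$ and $f(u-\mathrm{J}_2 v)$ with coefficients built from $\mathrm{J}_1$ and $\mathrm{J}_2$, and similarly to rewrite each $S_L^{-1}(u+\mathrm{J}_1 v, A)$ in terms of $S_L^{-1}(u\pm\mathrm{J}_2 v, A)$ via the right slice hyperholomorphic version of the formula. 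After expanding $ds_{\mathrm{J}_1} = dv - \mathrm{J}_1\, du$ and parametrizing by $(u,v)\in\mathcal{U}\subset\mathbb{R}^2$, the integral over $\partial(U\cap\mathbb{C}_{\mathrm{J}_1})$ becomes a sum of line integrals over the boundary curves of $\mathcal{U}$. Exploiting the axial symmetry of $U$, the branches involving $u-\mathrm{J}_2 v$ can be folded onto those involving $u+\mathrm{J}_2 v$ via the change of variable $v\mapsto -v$, and the $\mathrm{J}_1$-dependent coefficients should cancel, leaving precisely $\int_{\partial(U\cap\mathbb{C}_{\mathrm{J}_2})} S_L^{-1}(s,A)\, ds_{\mathrm{J}_2}\, f(s)$.

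The main obstacle lies in the algebraic bookkeeping of this second step: because $\mathcal{F}$ is noncommutative, the coefficients $\tfrac{1}{2}(1 \pm \mathrm{J}_2 \mathrm{J}_1)$ produced by the two Representation Formulae interact through the product on each side of $ds_{\mathrm{J}_1}$, and one must verify that the resulting cross-terms assemble correctly after the symmetry fold. This is a careful but purely mechanical calculation, which goes through under Assumption \ref{ASS} by following the pattern of the proof of Theorem 2.3.19 in \cite{6CKG}: the key identities used there, namely $\mathrm{J}^2 = -1$, $\overline{\mathrm{J}} = -\mathrm{J}$ for $\mathrm{J}\in\mathbb{S}$, and the compatibility of the anti-involution with the product, are all guaranteed in our more general setting.
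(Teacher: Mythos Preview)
Your first step (independence of $U$) is essentially the paper's own argument: both reduce to the vanishing of the boundary integral over a slice Cauchy domain whose closure avoids $\sigma_S(A)$, via Cauchy's integral theorem applied to the pair $(S_L^{-1}(\cdot,A),f)$.

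Your second step, however, is a genuinely different route. The paper does \emph{not} expand both $S_L^{-1}(\cdot,A)$ and $f$ via the Representation Formula and then track the noncommutative cross-terms. Instead it uses a nested Cauchy integral trick: choose slice Cauchy domains $U_q\subset U_s$ with $\overline{U_q}\subset U_s$, write $S_L^{-1}(s,A)$ (for $s\in\partial(U_s\cap\mathbb{C}_{\mathrm{J}})$) via the \emph{unbounded} right Cauchy formula of Theorem~\ref{CauchyOutsideSS} on the complement $U_q^c$ as
\[
S_L^{-1}(s,A)=\frac{1}{2\pi}\int_{\partial(U_q^c\cap\mathbb{C}_{\mathrm{I}})} S_L^{-1}(q,A)\,dq_{\mathrm{I}}\,S_R^{-1}(q,s),
\]
substitute, apply Fubini, use the identity $S_R^{-1}(q,s)=-S_L^{-1}(s,q)$ together with the orientation flip $\partial(U_q^c\cap\mathbb{C}_{\mathrm{I}})=-\partial(U_q\cap\mathbb{C}_{\mathrm{I}})$, and finish with the scalar Cauchy formula for $f$. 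No Representation-Formula algebra is needed at all; the change of slice is absorbed into the already-established slice-independence of the Cauchy formula (Theorem~\ref{abstSCalc}).

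Your approach is correct and is the one used in the literature for the Cauchy formula itself (indeed the paper points to the same reference you cite, Theorem~2.3.19 in \cite{6CKG}, for that purpose). Its cost is exactly the bookkeeping you flag: four cross-terms in the product of two Representation Formulae, sandwiching $ds_{\mathrm{J}_1}$, must be folded and simplified. The paper's approach trades this for the use of the unbounded Cauchy formula (Theorem~\ref{CauchyOutsideSS}) and the observation that $S_L^{-1}(\cdot,A)$ is right slice hyperholomorphic at infinity with limit $0$; the payoff is a cleaner, essentially computation-free argument.
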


\begin{proof}
We follow the proof in the quaternionic case, and we first show the independence of the definition of the choice of the slice Cauchy domain $U$.
So we choose another bounded slice Cauchy domain $U'\subset\mathcal{W}_{\mathcal{F}}$ containing $\sigma_{S}(A)$ and such that its closure is contained in the domain of $f\in\mathcal{SH}_L(\sigma_{S}(A))$.
If $\overline{U'}\not\subset U$, then $O:= U \cap U'$ is an axially symmetric open set, $O\supset\sigma_{S}(A)$.
We can hence find another slice Cauchy domain $U''$ with $\sigma_{S}(A)\subset U''$ and $\overline{U''} \subset O = U \cap U'$. The integrals over the boundaries of all three sets $U, U',U''$ agree because $f$ is slice hyperholomorphic. On the other hand, if $\overline{U'}\subset U$ we set $O=U\setminus U'$ and we observe that $\overline{O}$ is a bounded Cauchy domain whose closure is contained in $\rho_S(T)$. Thus the integral \eqref{abstSCalcLSSS} computed over $\partial (O\cap\mathbb C_{\mathrm J})$ vanishes, which implies
$$
\frac{1}{2\pi}\int_{\partial(U\cap\mathbb{C}_\mathrm{J})}S_L^{-1}(s,A)\,ds_\mathrm{J}\,f(s)-
\frac{1}{2\pi}\int_{\partial(U'\cap\mathbb{C}_\mathrm{J})}S_L^{-1}(s,A)\,ds_\mathrm{J}\,f(s)=0
$$
and the assertion follows also in this case.

We now  show the independence of the imaginary unit: let $\mathrm{I}$, $\mathrm{J}\in\mathbb{S}$ and let $U_q,U_s\subset \dom(f)$ with $\sigma_{S}(A)\subset U_q$ be  two slice Cauchy domains and assume $\overline{U_q}\subset U_s$. (The subscripts $q$ and $s$ denote the respective variable of integration in the following computation). The set $U_q^c := \mathcal{W}_{\mathcal{F}} \setminus U_q$  is an unbounded axially symmetric slice Cauchy domain with $\overline{U_q^c}\subset \rho_{S}(A)$. The $S$-resolvent $S_L^{-1}(s,A)$ is right slice hyperholomorphic on $\rho_{S}(A)$ and also at infinity so the right slice hyperholomorphic Cauchy formula implies
\begin{align*}
S_L^{-1}(s,A) =& \frac{1}{2\pi}\int_{\partial (U_q^c\cap\cc_\mathrm{I})} S_L^{-1}(q,A)\,dq_{\mathrm{I}}\,S_R^{-1}(q,s)
\end{align*}
for any $s\in U_s$. Since $\partial(U_q^c\cap\cc_\mathrm{J}) = - \partial (U_q\cap\cc_\mathrm{J})$ and  $S_R^{-1}(q,s) = - S_L^{-1}(s,q)$, we get
\begin{align*}
f(A)
=& \frac{1}{2\pi}\int_{\partial(U_s\cap\cc_\mathrm{J})} S_L^{-1}(s,A)\,ds_\mathrm{J}\,f(s)
\\
=
&  \frac{1}{(2\pi)^2}\int_{\partial(U_s\cap\cc_\mathrm{J})} \left(\int_{\partial (U_q^c\cap\cc_\mathrm{I})} S_L^{-1}(q,A)\,dq_{\mathrm{I}}\,S_R^{-1}(q,s)\right)\,ds_\mathrm{J}\,f(s)
\\
=
& \frac{1}{(2\pi)^2} \int_{\partial (U_q\cap\cc_\mathrm{I})} S_L^{-1}(q,A)\,dq_{\mathrm{I}}\left(\int_{\partial(U_s\cap\cc_\mathrm{J})}S_L^{-1}(s,q)\,
ds_\mathrm{J}\,f(s)\right)
\\
=
& \frac{1}{2\pi} \int_{\partial (U_q\cap\cc_\mathrm{I})} S_L^{-1}(q,A)\,dq_{\mathrm{I}}f(q),
\end{align*}
and the statement follows.
\end{proof}

We note that if $f\in \mathcal{N}(\sigma_S(A))$ then formulas \eqref{abstSCalcLSSS} and \eqref{abstSCalcRSSS} give the same operator. This can be shown as for paravector operators
 by uniform approximation of $f$ with intrinsic rational functions.
 This result and the fact that the $S$-functional calculus is consistent with the limits of uniformly convergent
 sequences of slice hyperholomorphic functions can be shown as in the case of paravector operators.

\begin{theorem}\label{SCalcBimonSSS}
 Let $A\in\boundOP(\mathcal{Y})$. If $f\in\mathcal{N}(\sigma_S(A))$,
 then both versions of $S$-functional calculus give the same operator $f(A)$, i.e., we have
\begin{equation*}\label{SCalcBimonEQ}
f(A)=\frac{1}{2\pi}\int_{\partial(U\cap\mathbb{C}_\mathrm{J})}S_L^{-1}(s,A)\, ds_\mathrm{J}\, f(s)= \frac{1}{2\pi}\int_{\partial(U\cap\mathbb{C}_\mathrm{J})}f(s)\,ds_\mathrm{J}\,S_R^{-1}(s,A).
\end{equation*}
\end{theorem}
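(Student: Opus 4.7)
The plan is to prove the identity on a dense subclass of $\mathcal{N}(\sigma_S(A))$ and then pass to the limit by continuity of the two integral formulas. The starting point is Theorem~\ref{TGenPolySSS}, whose last statement already establishes the equality for intrinsic polynomials $P(x) = \sum_{\ell=0}^M x^\ell a_\ell$ with $a_\ell \in \mathbb{R}$: both formulas produce the operator $P(A) = \sum_\ell A^\ell a_\ell = \sum_\ell a_\ell A^\ell$.

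Next I would extend the equality to intrinsic rational functions of the form $R(x) = P(x)Q(x)^{-1}$, where $P,Q$ are intrinsic polynomials and the spherical zero set of $Q$ is disjoint from $\sigma_S(A)$. A real partial-fractions decomposition of $R$ reduces the task to the building blocks $(x-a)^{-k}$ with $a \in \mathbb{R}$ and $(x^2 - 2\alpha x + \beta)^{-k}$ with $\alpha,\beta \in \mathbb{R}$, $\alpha^2 < \beta$. For these blocks both formulas can be evaluated directly using the $S$-resolvent equations (Theorem~\ref{312}) and the identity $\Q_{s_0}(A) = A^2 - 2\alpha A + \beta\,\id$ valid for any $s_0$ with $\Re(s_0) = \alpha$ and $|s_0|^2 = \beta$. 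The outputs are $(A-a\id)^{-k}$ and $\Q_{s_0}(A)^{-k}$, respectively; since these are built only from $A$ and real scalars, they are independent of left-versus-right placement in the integrand, and the two calculi agree on every intrinsic rational function.

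The final step is a Runge-type approximation result: every $f \in \mathcal{N}(\sigma_S(A))$ can be uniformly approximated on a slice Cauchy domain $U$ with $\sigma_S(A) \subset U \subset \overline{U} \subset \dom(f)$ by intrinsic rational functions whose poles lie outside $\sigma_S(A)$. This is proved as in the quaternionic and paravector settings of \cite{6CKG}, by combining classical Runge's theorem on each plane $\mathbb{C}_\mathrm{J}$ with the Representation Formula (Theorem~\ref{formulamon}) to globalize the approximation. Since the integrands in \eqref{abstSCalcLSSS} and \eqref{abstSCalcRSSS} depend continuously on $f$ in the supremum norm along $\partial(U \cap \mathbb{C}_\mathrm{J})$, passing to the limit transfers the equality from rational functions to all of $\mathcal{N}(\sigma_S(A))$.

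The main obstacle is this third step: one must verify that the Runge-type approximation argument carries over from the quaternionic and Clifford settings to the general algebra $\mathcal{F}$ of Assumption~\ref{ASS}. All the required ingredients---axially symmetric slice Cauchy domains, intrinsic rational functions built from real polynomials in $x$ and spherical factors $x^2 - 2\alpha x + \beta$, the Representation Formula, and the independence of the calculi from the choice of $U$ and $\mathrm{J}$ (Theorem~\ref{OperatorIntsSSS})---are already available in this generality in Section~\ref{UNIVERS} and in the current section, so the adaptation should be technical but routine.
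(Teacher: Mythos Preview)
Your proposal is correct and follows essentially the same approach as the paper. The paper's own proof is a one-line reference to Runge's theorem (Theorem~2.1.37 in \cite{6CKG}) adapted to this general setting, stating that the argument proceeds as in the paravector case; your outline---agreement on intrinsic polynomials via Theorem~\ref{TGenPolySSS}, extension to intrinsic rational functions, Runge-type approximation, and passage to the limit by continuity---is precisely the content of that standard argument spelled out in more detail.
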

\begin{proof}
It is easy to check that the proof follows from Runge's theorem (see Theorem 2.1.37 in \cite{6CKG}) adapted to this general setting and it follows the same lines as when $A$ is a paravector operator.

\end{proof}

\section{An application to bounded full Clifford operators}

As we mentioned the $S$-functional calculus was introduced for quaternionic operators and for
$(n+1)$-tuples $(T_0,...,T_n)$ of linear operators written in paravector form as
$T=T_0+\sum_{j=1}^ne_jT_j$.
The universality property of the $S$-functional calculus highlights the great generality of this calculus and here we show how it can be applied not only to paravector operators with noncommuting components but also to full Clifford operators with noncommuting components.
Such operators contain as a particular case paravector operators.
The main references in which the formulations and the properties of $S$-functional calculus
for quaternionic operators and for paravector operators are \cite{acgs,CSSFUNCANAL}.

\begin{remark}
The great adaptability of the $S$-functional calculus to several settings, formalized in terms of its universality property, is not
shared by other functional calculi based on different Cauchy formulas, but it is a consequence of the
Cauchy formula with slice hyperholomorphic Cauchy kernels. This fact has several advantages not only in operator theory but also for the function theory.
\end{remark}

To give an application of the $S$-functional calculus to full Clifford operators
we need slice hyperholomorphic functions with values in a Clifford algebra (slice monogenic functions).
Let $\rr_n$ be the real Clifford algebra over $n$ imaginary units $e_1,\ldots ,e_n$
satisfying the relations $e_\ell e_m+e_me_\ell=0$,\  $\ell\not= m$, $e_\ell^2=-1.$
 An element in the Clifford algebra will be denoted by $\sum_A e_Ax_A$ where
$A=(\ell_1\ldots \ell_r)$, $\ell_i\in\{1,2,\ldots, n\},\ \  \ell_1<\ldots <\ell_r$
 is a multi-index
and $e_A=e_{\ell_1} e_{\ell_2}\ldots e_{\ell_r}$, $e_\emptyset =1$.
An element $(x_0,x_1,\ldots,x_n)\in \mathbb{R}^{n+1}$  will be identified with the element
$
 x=x_0+\underline{x}=x_0+ \sum_{\ell=1}^nx_\ell e_\ell\in\mathbb{R}_n
$
called paravector and the real part $x_0$ of $x$ will also be denoted by $\Re(x)$.
The norm of $x\in\mathbb{R}^{n+1}$ is defined as $|x|^2=x_0^2+x_1^2+\ldots +x_n^2$.
 The conjugate of $x$ is defined by
$
\bar x=x_0-\underline x=x_0- \sum_{\ell=1}^nx_\ell e_\ell.
$
We denote by $\mathbb{S}$ the sphere
$$
\mathbb{S}=\{ \underline{x}=e_1x_1+\ldots +e_nx_n\ :  \  x_1^2+\ldots +x_n^2=1\};
$$
for $j\in\mathbb{S}$ we obviously have $j^2=-1$.
Given an element $x=x_0+\underline{x}\in\rr^{n+1}$ let us set
$
j_x=\underline{x}/|\underline{x}|$ if $\underline{x}\not=0,
$
 and given an element $x\in\rr^{n+1}$, the set
$$
[x]:=\{y\in\rr^{n+1}\ :\ y=x_0+j |\underline{x}|, \ j\in \mathbb{S}\}
$$
is an $(n-1)$-dimensional sphere in $\mathbb{R}^{n+1}$.
The vector space $\mathbb{R}+j\mathbb{R}$ passing through $1$ and
$j\in \mathbb{S}$ will be denoted by $\mathbb{C}_j$ and
an element belonging to $\mathbb{C}_j$ will be indicated by $u+jv$, for $u$, $v\in \mathbb{R}$.
With an abuse of notation we will write $x\in\mathbb{R}^{n+1}$.
Thus, if $U\subseteq\mathbb{R}^{n+1}$
a function $f:\ U\subseteq \mathbb{R}^{n+1}\to\mathbb{R}_n$ can be interpreted as
a function of the paravector $x$.
With the above notation the definition of the slice hyperholomorphic functions $f:\ U\subseteq \mathbb{R}^{n+1}\to\mathbb{R}_n$
is analogous to the notion of slice hyperholomorphic functions for Clifford algebra-valued functions.
Precisely we use the Definition \ref{slice}
for Clifford algebra valued-functions, i.e., slice monogenic functions.

\begin{remark}
Assumption \ref{ASS} on the algebra $\mathcal{F}$ and the fact that the functions are defined on  the weak cone allow us to consider a large set of possibilities
to define the $S$-functional calculus. We can also choose a subset of the weak cone in order to define the $S$-spectrum. For example, we do not need to take the full weak cone
$\mathcal{W}_{\mathcal{F}}$ when dealing with a Clifford algebra $\mathbb R_n$.
For our purposes, when we are working with $\mathbb{R}_n$-valued functions, it is enough to work with a subset of the weak cone, namely, the paravectors.
\end{remark}

So with the definitions of Section \ref{UNIVERS} we make the following identifications in order to apply the abstract formulation of the $S$-functional calculus.
\begin{itemize}
\item[(I)] $\mathcal{F}=\mathbb{R}_n$,

\item[(II)] We consider the sphere $\mathbb{S}=\{ \underline{x}=e_1x_1+\ldots +e_nx_n\ :  \  x_1^2+\ldots +x_n^2=1\}$,

\item[(III)] In $\mathcal{W}_\mathcal{F}$ we pick the paravectors identified with $\mathbb{R}^{n+1}=\bigcup_{j\in \mathbb{S}}\mathbb{C}_j$,

\item[(IV)] The involution is $s=s_0+\underline{s}\mapsto \overline{s}=s_0-\underline{s}$, for  all $s\in \mathbb{R}^{n+1}$.
\end{itemize}

In this case
$s_0={\rm Re}(s)=\frac{1}{2}(s+\overline{s})$ and $s\overline{s}=\overline{s}s=s_0^2+...+s_n^2$.
The definition of an {\em axially symmetric} set is as in the Clifford  setting, i.e.,
 we say that $U \subseteq \mathbb{R}^{n+1}$ is axially symmetric if $[x]\subset U$  for any $x \in U$.

 \medskip
 Then we consider the functional setting for operators.
We will consider a real Banach space $V$ over
$\mathbb{R}$
 with norm $\|\cdot \|$.
 By $V_n$ we denote $V\otimes \rr_n$ over $\rr_n$; $V_n$ can be made into a
 two-sided Banach module as explained in Section 2.
 An element in $V_n$ is of the type $\sum_A v_A\otimes e_A$.

We denote by
$\mathcal{B}(V)$  the space
of bounded $\mathbb{R}$-homomorphisms of the Banach space $V$ to itself
 endowed with the natural norm denoted by $\|\cdot\|_{\mathcal{B}(V)}$.
Given $T_A\in \mathcal{B}(V)$, we can introduce the full Clifford operator
\begin{equation}\label{fullyCliop}
\hat{T}=\sum_A T_Ae_A
\end{equation}
 and
its action on $v=\sum_B v_Be_B\in V_n$ as
$$
\hat{T}(v)=\sum_{A,B}T_A(v_B)e_Ae_B.
$$
 The operator $\hat{T}$ is a right-module homomorphism which is a bounded linear
map on $V_n$ and the norm is given by
\begin{equation}\label{FULLYCLIF}
\|\hat{T}\|_{\mathcal{B}(V_n)}:=\sum_A \|T_A\|_{\mathcal{B}(V)}.
\end{equation}
The paravector operators  are particular Clifford operators of the form
$T=T_0+\sum_{j=1}^ne_jT_j$ where $T_j\in\mathcal{B}(V)$ for $j=0,1,\ldots ,n$.
So to avoid confusion with the previous literature
a (full) Clifford operator is denoted by $\hat{T}$ in order to distinguish it
from paravectors operators usually denoted by $T$.
The subset of paravector operators in ${\mathcal{B}(V_n)}$ are usually denoted by $\mathcal{B}^{\small 0,1}(V_n)$.
Observe that we have $\|T\|_{\mathcal{B}^{\small 0,1}(V_n)}=\sum_j \|T_j\|_{\mathcal{B}(V)}$.
It is evident from \eqref{fullyCliop} and \eqref{FULLYCLIF} that if $\mathcal{B}(V)$ is real Banach algebra, $\mathcal{B}(V_n)$ is a Banach algebra over $\mathbb R_n$. In the sequel we will omit the subscript $\mathcal{B}(V_n)$ in the norm of an operator.
Note also that  $\|\hat{T}\hat{S}\|\leq \|\hat{T}\| \|\hat{S}\|$.

\medskip
To apply the $S$-functional calculus to full Clifford operators we choose
$$
\mathcal{Y}=V_n=V\otimes \rr_n.
$$
So if we let
 $\hat{T}\in\mathcal{B}(V_n)$ and $s\in \mathbb{R}^{n+1}$ the series
\[
\sum_{m=0}^{+\infty} \hat{T}^ms^{-m-1}\qquad\text{and}
\qquad \sum_{m=0}^{+\infty}s^{-m-1}\hat{T}^m
\]
are called the left and the right $S$-resolvent operator series associated with the Clifford operator $\hat{T}$, respectively.
The following facts are now a direct consequence of the abstract formulation
of the $S$-functional calculus.

\begin{theorem}\label{LemmaQInverse}
Let $\hat{T}\in\mathcal{B}(V_n)$ and let $s\in \mathbb{R}^{n+1}$ with $\|\hat{T}\|<|s|$.
Then we have:

(I) The left and right $S$-resolvent operator  series converge in the operator norm.

(II) We have
\begin{equation}\label{Qinverse}
\left(\hat{T}^2 - 2\Re(s)\hat{T} + |s|^2\id\right)^{-1} = \sum_{m=0}^{+\infty}\hat{T}^m \sum_{k=0}^n(\overline{s})^{-k-1}({s})^{-m+k-1},
\end{equation}
where this series converges in the operator norm.

(III)
The left $S$-resolvent series equals
\begin{equation}\label{CSSL}
\sum_{m=0}^{+\infty}\hat{T}^ms^{-m-1} = - (\hat{T}^2 - 2\Re (s)\hat{T} +|s|^2\id)^{-1}(\hat{T}-\overline{s}\id).
\end{equation}

(IV)
The right $S$-resolvent series equals
\begin{equation}\label{CSSR}
 \sum_{m=0}^{+\infty}s^{-m-1}\hat{T}^m = - (\hat{T}-\overline{s}\id)(\hat{T}^2 - 2\Re(s)\hat{T} +|s|^2\id)^{-1}.
 \end{equation}
\end{theorem}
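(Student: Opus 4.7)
The plan is to deduce all four parts from the abstract $S$-functional calculus machinery developed in Section~3, once I verify that the Clifford setting indeed fits the framework of Assumption~\ref{ASS}. The main work is to check that the identifications (I)--(IV) listed just before the theorem realize all the hypotheses needed to invoke Lemma~\ref{Srespowrser}, Theorem~\ref{LemmaQInverseSSS}, and Theorem~\ref{leftrightSSS} verbatim.

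First I would confirm that $\mathcal{F}=\mathbb{R}_n$ with the Clifford conjugation satisfies Assumption~\ref{ASS}: the algebra is finite-dimensional real associative with unit; the Clifford conjugation is an anti-involution fixing $\mathbb{R}$; and $[a\bar a]_0 = \sum_A a_A^2$ is nonnegative and zero only when $a=0$, so the prescription \eqref{norm} yields the usual Euclidean norm. The set $\mathbb{S}$ is nonempty because $e_1\in\mathbb{S}$. Next I would observe that $V_n=V\otimes\mathbb{R}_n$ is a two-sided Banach module over $\mathbb{R}_n$ as in the construction of Section~2, and that $\mathcal{B}(V_n)$ is a two-sided Banach algebra over $\mathbb{R}_n$ with unit $\id$ and submultiplicative norm \eqref{FULLYCLIF} (this is exactly what is asserted in the text preceding the theorem).

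Second I would remark that the paravectors $\mathbb{R}^{n+1}=\bigcup_{j\in\mathbb{S}}\mathbb{C}_j$ are contained in the weak cone $\mathcal{W}_{\mathbb{R}_n}$ (as noted in the earlier example), and that on paravectors the anti-involution reduces to $\bar s=s_0-\underline{s}$, so $2\Re(s)=s+\bar s$ and $|s|^2=s\bar s=\bar s\, s$ are real and hence commute with every operator $\hat{T}\in\mathcal{B}(V_n)$. This is the only compatibility that the abstract proofs actually exploit about the ambient algebra.

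Third, with these identifications in place, each item of the theorem becomes a direct specialization:
\begin{itemize}
\item Part~(I) is Lemma~\ref{Srespowrser} with $A=\hat{T}$ and $s\in\mathbb{R}^{n+1}\subset\mathcal{W}_{\mathbb{R}_n}$, giving geometric series convergence in $\mathcal{B}(V_n)$ whenever $\|\hat{T}\|<|s|$.
\item Part~(II) is Theorem~\ref{LemmaQInverseSSS} applied to $\hat{T}$; the series expansion of $\mathcal{Q}_s(\hat{T})^{-1}$ follows, and the coefficient estimate $|a_m|\leq (m+1)|s|^{-m-2}$ together with $\|\hat T\|<|s|$ yields norm convergence via the ratio test.
\item Parts~(III) and~(IV) are the left and right halves of Theorem~\ref{leftrightSSS}, where the identity \eqref{idenimpo} (or its right analogue) combined with the invertibility established in part~(II) produces the closed forms \eqref{CSSL} and \eqref{CSSR}.
\end{itemize}

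There is no real obstacle: the content of the theorem is precisely the universality property applied to the pair $(\mathcal{F},\mathcal{Y})=(\mathbb{R}_n,V_n)$. The only bookkeeping to check is that the Clifford conjugation restricted to paravectors agrees with the abstract anti-involution on $\mathcal{W}_{\mathbb{R}_n}\cap\mathbb{R}^{n+1}$ (it does, since both send $s_0+\underline{s}\mapsto s_0-\underline{s}$), and that the real scalars $\Re(s)$ and $|s|^2$ commute with $\hat{T}$; once this is noted, the proof consists of a one-line appeal to each of the three abstract results.
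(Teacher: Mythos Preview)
Your proposal is correct and matches the paper's approach exactly: the paper states this theorem as ``a direct consequence of the abstract formulation of the $S$-functional calculus'' without writing out a proof, implicitly relying on the identifications (I)--(IV) to invoke Lemma~\ref{Srespowrser}, Theorem~\ref{LemmaQInverseSSS}, and Theorem~\ref{leftrightSSS}. You have simply made explicit the bookkeeping (Clifford conjugation satisfies Assumption~\ref{ASS}, paravectors sit in the weak cone, $\Re(s)$ and $|s|^2$ are real) that the paper leaves to the reader.
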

The definition of the $S$-spectrum and of the $S$-resolvent set in this particular case become:
\begin{definition}\label{defSspectrum}
Let $\hat{T}\in\boundOP(V_n)$. For $s\in\mathbb{R}^{n+1}$, we set
\[
\Q_{s}(\hat{T}):=\hat{T}^2 - 2\Re(s)\hat{T} + |s|^2\id.
\]
We define the $S$-resolvent set $\rho_S(\hat{T})$ of $\hat{T}$ as
\[
\rho_S(\hat{T}) := \{s\in \mathbb{R}^{n+1}: \Q_{s}(\hat{T})  \text{ is invertible in $\mathcal{B}(V_n)$}\}
\]
and we define the $S$-spectrum $\sigma_S(\hat{T})$ of $\hat{T}$ as
\[
 \sigma_S(\hat{T}) := \mathbb{R}^{n+1}\setminus\rho_S(\hat{T}).
\]
For $s\in\rho_{S}(\hat{T})$, the operator $\Q_{s}(\hat{T})^{-1}\in\boundOP(V_n)$
is  the pseudo-resolvent operator  of $\hat{T}$ at $s$.
\end{definition}
\begin{remark}{\rm
For $\hat{T}\in\boundOP(V_n)$ an equivalent definition of the $S$-spectrum is
\[
\sigma_S(\hat{T}) := \{s\in \mathbb{R}^{n+1}: \Q_{s}(\hat{T})  \text{ is not invertible in $\boundOP(V_n)$}\}
\]
and the  $S$-resolvent set $\rho_S(\hat{T})$ is defined as
\[
 \rho_S(\hat{T}) := \mathbb{R}^{n+1}\setminus\sigma_S(\hat{T}).
\]
}
\end{remark}

\begin{remark}
Observe that in the literature the definition of the $S$-spectrum for $(n+1)$-tuples of noncommuting operators was referred to paravector operators.
In Definition  \ref{defSspectrum}
the $S$-spectrum is defined for operators of the form (\ref{fullyCliop}).
\end{remark}

\begin{remark}
We make a further observation. Consider  a slice monogenic polynomial
$$
P(x)=\sum_{m=0}^Mx^ma_m,\ \ a_m\in \mathbb{R}_n
$$
of order $M$, where $x$ is a paravector.
 We can define the slice monogenic polynomial of the Clifford number $\hat{x}\in \mathbb{R}_n$ by simply replacing the paravector $x$ by $\hat{x}$
 and we get
$$
P(\hat{x})=\sum_{m=0}^M\hat{x}^ma_m,\ \ a_m\in \mathbb{R}_n.
$$
Analogously, we can replace the
the paravector ${x}$ by the bounded full Clifford operator $\hat{T}$ in the polynomial and we get
$$
P(\hat{T})=\sum_{m=0}^M\hat{T}^ma_m,\ \ a_m\in \mathbb{R}_n.
$$
We can repeat the same reasoning for series instead of polynomials.
\end{remark}

Before we define the $S$-functional calculus, we show that the procedure is actually meaningful
because it is consistent with functions of $\hat{T}$ that we can define explicitly, that is with the definition of polynomials of $\hat{T}$. In fact this is a consequence of the general theory (see Section 3 for the notations).

\begin{lemma}\label{PolyT}
Let $\hat{T}\in\boundOP(V_n)$, let $m\in  \mathbb{N}\cup\{0\}$ and let $U\subset \mathbb{R}^{n+1}$
be a bounded slice Cauchy domain with $\sigma_{S}(\hat{T})\subset U$. For any imaginary unit $j\in\mathbb{S}$ we set $ds_j=ds(-j)$. So we have
\begin{align*}
\hat{T}^m =& \frac{1}{2\pi}\int_{\partial(U\cap\mathbb{C}_j)}S_L^{-1}(s,\hat{T})\,ds_j\,s^m
= \frac{1}{2\pi}\int_{\partial(U\cap\mathbb{C}_j)}s^m\,ds_j\,S_R^{-1}(s,\hat{T}),
 \end{align*}
 where $S_L^{-1}$ and $S_R^{-1}$ are as in Definition \ref{ResolventRegularAAA}.
\end{lemma}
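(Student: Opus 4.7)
The plan is to view this lemma as a direct specialization of the abstract result Lemma \ref{PolyTABSSS} to the concrete setting of full Clifford operators. Once one checks that the identifications (I)--(IV) made at the beginning of Section 4 fit Assumption \ref{ASS}, and that $\mathcal{B}(V_n)$ is a two-sided Banach algebra over $\mathbb{R}_n$ with unit $\mathcal{I}$ (which follows from the norm identity \eqref{FULLYCLIF} and the definition of the product of Clifford operators), there is nothing further to prove: one simply takes $\mathcal{F}=\mathbb{R}_n$, $\mathcal{Y}=V_n$, and $A=\hat{T}$ in Lemma \ref{PolyTABSSS}.

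If instead one wanted a self-contained argument for the concrete case, I would follow exactly the two-step scheme of Lemma \ref{PolyTABSSS}. First I would treat the case where $U = B_r(0)$ is the ball of radius $r>\|\hat{T}\|$ centered at $0$. By Theorem \ref{LemmaQInverse} item (III), the series representation
\[
S_L^{-1}(s,\hat{T}) = \sum_{k=0}^{+\infty}\hat{T}^k s^{-k-1}
\]
converges uniformly for $s\in \partial B_r(0)$. Substituting this into the integral, exchanging sum and integral by uniform convergence, and using the standard identity
\[
\frac{1}{2\pi}\int_{\partial(B_r(0)\cap \mathbb{C}_j)} s^{-k-1}\,ds_j\,s^m = \delta_{k,m},
\]
which is a one-variable computation on the complex plane $\mathbb{C}_j$ (integrating a monomial on a circle), yields $\hat{T}^m$.

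Second, given an arbitrary bounded slice Cauchy domain $U$ containing $\sigma_S(\hat{T})$, I would pick $r>0$ large enough that $\overline{U}\subset B_r(0)$. On the slice Cauchy domain $B_r(0)\setminus\overline{U}$, whose closure lies in $\rho_S(\hat{T})$, the map $s\mapsto S_L^{-1}(s,\hat{T})$ is right slice hyperholomorphic (by the Clifford analogue of Lemma \ref{ResolventRegularA}, which is immediate from the explicit formulas \eqref{CSSL}) and $s\mapsto s^m$ is left slice hyperholomorphic, so the slice-hyperholomorphic Cauchy integral theorem gives
\[
\frac{1}{2\pi}\int_{\partial(B_r(0)\cap \mathbb{C}_j)} S_L^{-1}(s,\hat{T})\,ds_j\,s^m \;=\; \frac{1}{2\pi}\int_{\partial(U\cap \mathbb{C}_j)} S_L^{-1}(s,\hat{T})\,ds_j\,s^m,
\]
completing the proof for the left version; the right version follows by the symmetric argument using item (IV) of Theorem \ref{LemmaQInverse}.

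The only step that needs mild attention is the verification that the framework of Section 3 really applies to $\mathcal{F}=\mathbb{R}_n$ with $\mathcal{Y}=V_n$; but this was set up precisely in the paragraphs preceding the lemma, so no obstacle is expected. The heart of the proof is the monomial-on-a-circle integral computation, which is entirely classical once one reduces to the complex plane $\mathbb{C}_j$.
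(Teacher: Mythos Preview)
Your proposal is correct and matches the paper's approach exactly: the paper does not give a separate proof of Lemma~\ref{PolyT} but simply records it as a direct consequence of the abstract Lemma~\ref{PolyTABSSS} under the identifications (I)--(IV), precisely as you do. Your optional self-contained argument also mirrors the paper's proof of Lemma~\ref{PolyTABSSS} step for step (series expansion on a large ball, then Cauchy's theorem to pass to a general slice Cauchy domain).
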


\begin{definition}\label{def49}
Let $\hat{T}\in\mathcal{B}(V_n)$.
We denote by $\mathcal{SM}_L(\sigma_{S}(\hat{T}))$,
$\mathcal{SM}_R(\sigma_{S}(\hat{T}))$
and $\mathcal{N}(\sigma_{S}(\hat{T}))$
the set of all  left, right and intrinsic  slice monogenic functions on $U$ where
 where $U$ is any axially symmetric domain such that
$\sigma_{S}(\hat T)\subset U$, $\overline{U}\subset\dom(f)$ and $\dom(f)$ is the domain of the function $f$.
\end{definition}

\begin{definition}[$S$-functional calculus for full Clifford operators]\label{SCalc2}
Let $\hat{T}\in\boundOP(V_n)$. For any imaginary unit $j\in\mathbb{S}$ we set $ds_j=ds(-j)$. We define
\begin{equation}\label{SCalcL}
f(\hat{T}) := \frac{1}{2\pi}\int_{\partial(U\cap\mathbb{C}_j)}S_L^{-1}(s,\hat{T})\,ds_j\,f(s),
 \ \ {\rm for \ any} \ \  f\in\mathcal{SM}_L(\sigma_S(\hat{T})),
\end{equation}
and

\begin{equation}\label{SCalcR}
f(\hat{T}) := \frac{1}{2\pi}\int_{\partial(U\cap\mathbb{C}_j)}f(s)\,ds_j\,S_R^{-1}(s,\hat{T}),
 \ \ {\rm for \ any} \ \  f\in\mathcal{SM}_R(\sigma_S(\hat{T})),
\end{equation}
where $U$ is as in Definition \ref{def49}.
\end{definition}
Theorem~\ref{TGenPolySSS} shows that the $S$-functional calculus is meaningful because it is consistent with polynomials of $\hat{T}$. As the next crucial result shows, the $S$-functional calculus is well-defined because the integrals do not depend on the open set $U$ that contain the $S$-spectrum and
 on  $j\in\mathbb{S}$.

\begin{theorem}\label{OperatorInts}
Let $\hat{T}\in\boundOP(V_n)$. For any $f\in\mathcal{SM}_L(\sigma_{S}(\hat{T}))$, the integral in \eqref{SCalcL} that defines the operator $f(\hat{T})$ is independent of the choice of the slice Cauchy domain $U$ and of the imaginary unit $j\in\mathbb{S}$. Similarly, for any  $f\in\mathcal{SM}_R(\sigma_{S}(\hat{T}))$, the integral in \eqref{SCalcR} that defines the operator $f(\hat{T})$ is also independent of the choice of $U$ and $j\in\mathbb{S}$.
\end{theorem}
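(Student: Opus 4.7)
The plan is to recognize that this theorem is a direct specialization of the abstract Theorem~\ref{OperatorIntsSSS}: we are taking $\mathcal{F}=\mathbb{R}_n$, we work on the paravector subset $\mathbb{R}^{n+1}\subset\mathcal{W}_{\mathcal{F}}$, and we let $\mathcal{Y}=V_n=V\otimes\mathbb{R}_n$ with $\hat{T}\in\boundOP(V_n)$ the full Clifford operator. Since Assumption~\ref{ASS} is satisfied by $\mathbb{R}_n$ and $V_n$ is a two-sided Banach module over $\mathbb{R}_n$, every ingredient used in the abstract proof is available. Accordingly, the proof consists of transcribing the two steps of the abstract argument into the present notation.

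For independence of the slice Cauchy domain, I would pick two admissible bounded slice Cauchy domains $U,U'$ with $\sigma_S(\hat{T})\subset U\cap U'$ and $\overline{U}\cup\overline{U'}\subset\dom(f)$. If neither contains the closure of the other, then $O:=U\cap U'$ is axially symmetric and one can inscribe inside it a third slice Cauchy domain $U''$ still containing $\sigma_S(\hat{T})$. Since $f$ is (left) slice hyperholomorphic on $\overline{U}\setminus U''$ and $S_L^{-1}(s,\hat{T})$ is right slice hyperholomorphic there by Lemma~\ref{ResolventRegularA}, Cauchy's integral theorem in the slice setting forces the integrals over $\partial(U\cap\mathbb{C}_j)$ and $\partial(U''\cap\mathbb{C}_j)$ to agree, and likewise for $U'$. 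In the easier case $\overline{U'}\subset U$, one applies Cauchy's theorem directly on $U\setminus\overline{U'}$, whose closure lies in $\rho_S(\hat{T})\cap\dom(f)$.

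For independence of the imaginary unit, I would imitate the double-integral argument of Theorem~\ref{OperatorIntsSSS}. Fix $\mathrm{I},j\in\mathbb{S}$ and choose two slice Cauchy domains $U_q,U_s$ with $\sigma_S(\hat{T})\subset U_q$, $\overline{U_q}\subset U_s\subset\dom(f)$. Since $S_L^{-1}(\cdot,\hat{T})$ is right slice hyperholomorphic on $\rho_S(\hat{T})$ and vanishes at infinity, the Cauchy formula on the unbounded slice Cauchy domain $U_q^c$ (Theorem~\ref{CauchyOutsideSS}) represents $S_L^{-1}(s,\hat{T})$ for $s\in U_s$ via an integral over $\partial(U_q^c\cap\mathbb{C}_{\mathrm{I}})$. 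Substituting this into the defining integral \eqref{SCalcL}, reversing orientation via $\partial(U_q^c\cap\mathbb{C}_{\mathrm{I}})=-\partial(U_q\cap\mathbb{C}_{\mathrm{I}})$, exchanging the order of integration (justified by the uniform boundedness of the kernels on the compact contours), and using the identity $S_R^{-1}(q,s)=-S_L^{-1}(s,q)$, the inner integral becomes the left slice hyperholomorphic Cauchy integral that recovers $f(q)$ by Theorem~\ref{abstSCalc}. What remains is the integral over $\partial(U_q\cap\mathbb{C}_{\mathrm{I}})$, which has the same form as \eqref{SCalcL} but with the unit $\mathrm{I}$ instead of $j$. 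The right-slice-hyperholomorphic case is strictly analogous.

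The main obstacle will be the interchange of integration in the independence-of-$j$ step: one must verify that Fubini applies in this Clifford-algebra-valued setting, which reduces to confirming that both Cauchy kernels and the operator $S_L^{-1}(q,\hat{T})$ are continuous and hence bounded on the compact contours $\partial(U_q^c\cap\mathbb{C}_{\mathrm{I}})\times\partial(U_s\cap\mathbb{C}_j)$ (intersected appropriately), and that the Clifford multiplications are jointly continuous, which follows from submultiplicativity of the norm $\|\cdot\|_{\boundOP(V_n)}$ and the finite dimensionality of $\mathbb{R}_n$. Once this is in place, the rest is a bookkeeping translation of the abstract proof.
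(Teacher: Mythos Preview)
Your proposal is correct and takes essentially the same approach as the paper: the paper's proof of Theorem~\ref{OperatorInts} is the single sentence ``It is a particular case of Theorem~\ref{OperatorIntsSSS},'' and you correctly identify this specialization (with $\mathcal{F}=\mathbb{R}_n$, $\mathcal{Y}=V_n$, $\hat{T}\in\boundOP(V_n)$) and then go further by transcribing the two steps of the abstract argument in the Clifford setting. Your additional care about the Fubini step is more than the paper provides, but it is sound and not a deviation in method.
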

\begin{proof}
It is a particular case of Theorem \ref{OperatorIntsSSS}.
\end{proof}

\begin{theorem}\label{SCalcBimon}
 Let $\hat{T}\in\boundOP(V_n)$. If $f\in\mathcal{N}(\sigma_S(\hat{T}))$,
 then both versions of $S$-functional calculus give the same operator $f(\hat{T})$. Precisely, we have
\begin{equation*}\label{SCalcBimonEQ2}
f(\hat{T})=\frac{1}{2\pi}\int_{\partial(U\cap\mathbb{C}_j)}S_L^{-1}(s,\hat{T})\, ds_j\, f(s)= \frac{1}{2\pi}\int_{\partial(U\cap\mathbb{C}_j)}f(s)\,ds_j\,S_R^{-1}(s,\hat{T}).
\end{equation*}
\end{theorem}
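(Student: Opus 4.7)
The plan is to deduce this as a direct specialization of the abstract Theorem \ref{SCalcBimonSSS} to the Clifford setting of Section 4. First I would verify that the identifications made in Section 4 place us exactly in the framework of Assumption \ref{ASS}: $\mathbb{R}_n$ is a nontrivial, finite-dimensional, associative real algebra with unit, the Clifford conjugation is an anti-involution, the induced norm $\|a\| = ([a\bar a]_0)^{1/2}$ is a genuine norm on $\mathbb{R}_n$, and the set $\mathbb{S}$ of imaginary units is nonempty. Moreover, $V_n = V \otimes \mathbb{R}_n$ is a two-sided Banach module over $\mathbb{R}_n$ and $\boundOP(V_n)$ is a two-sided Banach algebra over $\mathbb{R}_n$ with unit $\id$, so all the structural requirements used in Section 3 are met.

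Under these identifications, the operators $S_L^{-1}(s,\hat{T})$ and $S_R^{-1}(s,\hat{T})$ of Definition \ref{ResolventRegularAAA} specialize to the resolvents appearing here, and the $S$-spectrum $\sigma_S(\hat{T})$ of Definition \ref{defSspectrum} is a particular case of the abstract $\sigma_S(A)$. The only subtlety is that Definition \ref{SCalc2} integrates over contours lying in the paravector subset $\mathbb{R}^{n+1}\subset \mathcal{W}_{\mathbb{R}_n}$ rather than over the full weak cone; however, for any $j\in\mathbb{S}$ the curve $\partial(U\cap\mathbb{C}_j)$ is identical in both formulations, so the two integrals coincide with those of Definition \ref{SCalc}. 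Therefore I would invoke Theorem \ref{SCalcBimonSSS} directly, which yields the asserted equality for every $f\in\mathcal{N}(\sigma_S(\hat{T}))$.

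For completeness, I would sketch the mechanism underlying the abstract result as a sanity check. For any intrinsic slice hyperholomorphic polynomial $P(x)=\sum_{\ell=0}^M x^\ell a_\ell$ with $a_\ell\in\mathbb{R}$, Theorem \ref{TGenPolySSS} already asserts that both integral formulas produce $P(\hat{T})=\sum_{\ell=0}^M \hat{T}^\ell a_\ell$, since the real coefficients commute with $\hat{T}$. This extends first to intrinsic rational functions (whose pole factors $\Q_s(\hat{T})^{-1}$ are handled symmetrically by the two resolvents via Definition \ref{ResolventRegularAAA}) and then, by the Runge-type approximation theorem adapted to the present general setting (Theorem 2.1.37 in \cite{6CKG}), to arbitrary $f\in\mathcal{N}(\sigma_S(\hat{T}))$ through uniform approximation on a slice Cauchy domain containing $\sigma_S(\hat{T})$; both integral formulas are continuous under uniform limits of slice hyperholomorphic functions, so the equality persists in the limit.

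The principal obstacle in a fully self-contained argument would be justifying the Runge-type approximation of intrinsic slice monogenic functions by intrinsic rational functions in the full Clifford framework. However, the excerpt has already asserted this extension in the proof of Theorem \ref{SCalcBimonSSS}, so in the present specialization the result follows with no further work.
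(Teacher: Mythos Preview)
Your proposal is correct and matches the paper's own proof, which consists of the single sentence ``It is a particular case of Theorem~\ref{SCalcBimonSSS}.'' Your additional verification that the Clifford identifications satisfy Assumption~\ref{ASS}, your remark on the paravector-versus-weak-cone subtlety, and your sketch of the Runge-type mechanism are all accurate elaborations, but they go beyond what the paper records.
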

\begin{proof}
It is a particular case of Theorem \ref{SCalcBimonSSS}.
\end{proof}

\section{Some properties of the $S$-functional calculus for bounded Clifford operators}
A direct consequence of the definition of the $S$-functional calculus shows that

\begin{lemma}\label{SCalcLinear}
Let $\hat{T}\in\boundOP(V_n)$.

(I)
 If $f,g\in\mathcal{SM}_{L}(\sigma_{S}(\hat{T}))$ and $a\in\mathbb{R}_n$, then
$$
(f+g)(\hat{T}) = f(\hat{T})  + g(\hat{T})\qquad\text{and}\qquad (fa)(\hat{T}) = f(\hat{T})a.
$$

(II) If $f,g\in\mathcal{SM}_{R}(\sigma_{S}(\hat{T}))$ and $a\in\mathbb{R}_n$, then
$$
(f+g)(\hat{T}) = f(\hat{T}) + g(\hat{T})\qquad\text{and}\qquad (af)(\hat{T}) = af(\hat{T}).
$$
\end{lemma}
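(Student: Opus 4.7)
The plan is to reduce both parts directly to the linearity of the contour integrals in Definition~\ref{SCalc2}, using the right/left module structure of the spaces of slice hyperholomorphic functions. The only preparatory step is to ensure that the same slice Cauchy domain $U$ can be used in the integrals defining $f(\hat T)$, $g(\hat T)$, $(f+g)(\hat T)$, and $(fa)(\hat T)$ (respectively $(af)(\hat T)$); once this is done, everything reduces to moving constants in $\mathbb{R}_n$ in and out of the integral.

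For the preparatory step, given $f,g\in\mathcal{SM}_L(\sigma_S(\hat T))$, the domains $\dom(f)$ and $\dom(f)$ are open axially symmetric sets each containing $\sigma_S(\hat T)$, hence so is $\dom(f)\cap\dom(g)$. Since $\sigma_S(\hat T)$ is compact by Theorem~\ref{PropSpec} (applied through the universality reduction of Section~4), I can choose a bounded slice Cauchy domain $U$ with $\sigma_S(\hat T)\subset U$ and $\overline{U}\subset\dom(f)\cap\dom(g)$. By Theorem~\ref{OperatorInts} the values $f(\hat T)$ and $g(\hat T)$ are independent of the particular Cauchy domain used, so both may be computed on $\partial(U\cap\mathbb{C}_j)$ for a common $j\in\mathbb{S}$.

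For part (I), the function $f+g$ is again in $\mathcal{SM}_L(\sigma_S(\hat T))$ with the same domain $U$, and
\[
(f+g)(\hat T)=\frac{1}{2\pi}\int_{\partial(U\cap\mathbb{C}_j)}S_L^{-1}(s,\hat T)\,ds_j\,\bigl(f(s)+g(s)\bigr),
\]
which splits into $f(\hat T)+g(\hat T)$ by linearity of the contour integral. For the scaling, $(fa)(s)=f(s)a$ defines an element of $\mathcal{SM}_L(\sigma_S(\hat T))$ because $\mathcal{SM}_L(U)$ is a right $\mathbb{R}_n$-module (in the sense recalled at the end of Section~2); then
\[
(fa)(\hat T)=\frac{1}{2\pi}\int_{\partial(U\cap\mathbb{C}_j)}S_L^{-1}(s,\hat T)\,ds_j\,f(s)\,a
=\Bigl(\frac{1}{2\pi}\int_{\partial(U\cap\mathbb{C}_j)}S_L^{-1}(s,\hat T)\,ds_j\,f(s)\Bigr)a=f(\hat T)\,a,
\]
where pulling the constant $a\in\mathbb{R}_n$ out to the right is allowed because the integral is a norm-limit of Riemann sums each of whose terms ends with the factor $f(s)a$, and associativity in the Banach algebra $\boundOP(V_n)$ over $\mathbb{R}_n$ preserves that factor through the limit.

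Part (II) is entirely symmetric: one uses the right slice hyperholomorphic version \eqref{SCalcR}, selects a common domain $U$ for $f$ and $g$ in exactly the same way, invokes linearity of the integral for additivity, and for the scaling pulls the constant $a$ out to the \emph{left} of the integrand $af(s)\,ds_j\,S_R^{-1}(s,\hat T)$. There is no substantive obstacle here; the only point that requires a brief argument is the existence of a common Cauchy domain, which is why I would treat that step explicitly at the start.
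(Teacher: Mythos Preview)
Your proposal is correct and takes essentially the same approach as the paper, which in fact offers no proof at all beyond the sentence ``A direct consequence of the definition of the $S$-functional calculus shows that''. You have simply spelled out the details---choosing a common slice Cauchy domain via compactness of $\sigma_S(\hat T)$ and Theorem~\ref{OperatorInts}, then invoking linearity of the contour integral and the right/left $\mathbb{R}_n$-module structure---that the paper leaves implicit.
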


The following lemma is important when proving the product rule. The proof can be carried out in a similar manner to the quaternionic case, see \cite{acgs}.

\begin{lemma}\label{HelpProdRule}
Let $B\in\boundOP(V_n)$. Assume that $f$ is an intrinsic slice  monogenic function and $U$ is a bounded slice Cauchy domain with $\overline{U}\subset \dom(f)$, then
\[
\frac{1}{2\pi}\int_{\partial(U\cap\cc_j)}f(s)\, ds_j\, (\overline{s}B-Bq)(q^2-2\Re(s)q+|s|^2)^{-1}=Bf(q)
\]
for any $q\in$ $U$ and any $j\in \mathbb{S}$.
\end{lemma}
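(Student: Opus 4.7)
The plan is to reduce the operator identity to a scalar--vector identity and then establish the latter via a kernel decomposition combined with two applications of the right slice hyperholomorphic Cauchy formula. Since $B$ is right-$\mathbb{R}_n$-linear, evaluating both sides at an arbitrary $\eta \in V_n$ and setting $w := B(\eta) \in V_n$ allows $B$ to be pulled through every right-multiplication by a Clifford scalar. The operator identity is then equivalent to
\[
\frac{1}{2\pi}\int_{\partial(U \cap \cc_j)} f(s)\,ds_j\,(\bar s w - wq)\,Q_s(q)^{-1} = w\,f(q) \qquad (w \in V_n,\ q \in U).
\]

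From the algebraic identity $\bar s w - wq = w(\bar s - q) + [\bar s, w]$ together with $(\bar s - q)Q_s(q)^{-1} = S_R^{-1}(s, q)$, the kernel splits as $w\,S_R^{-1}(s, q) + [\bar s, w]\,Q_s(q)^{-1}$. I then decompose $w = w_\parallel + w_\perp$ according to commutation with $j$, i.e., $jw_\parallel = w_\parallel j$ and $jw_\perp = -w_\perp j$. For the parallel piece, $[\bar s, w_\parallel] = 0$ and $w_\parallel$ commutes with every element of $\cc_j$ (in particular with $f(s)\,ds_j$), so the right Cauchy formula of Theorem~\ref{abstSCalc}, applied to the intrinsic $f$, yields $w_\parallel f(q)$. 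For the perpendicular piece, I use the commutation rules $[\bar s, w_\perp] = -2j\,\mathrm{Im}_j(s)\,w_\perp$ and $c\,w_\perp = w_\perp\,\bar c$ for every $c \in \cc_j$; these let me pull $w_\perp$ outside the integral at the cost of conjugating the $\cc_j$-valued factor and, after collecting the two terms, produce the single integral $w_\perp \int \overline{ds_j}\,f(\bar s)\,(s - q)Q_s(q)^{-1}$. A change of variable $s \mapsto \bar s$ then reverses the orientation of the axially symmetric contour $\partial(U \cap \cc_j)$ and, using that $f$ is intrinsic (so $f(\bar s) = \overline{f(s)}$) together with $(\bar s - q)Q_s(q)^{-1} = S_R^{-1}(s, q)$, converts this into $w_\perp \int ds_j\,f(s)\,S_R^{-1}(s, q)$, which by the Cauchy formula equals $2\pi\,w_\perp f(q)$. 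Summing the parallel and perpendicular contributions yields $2\pi\,wf(q)$, which is the claim.

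The main obstacle is the non-trivial cancellation in the perpendicular part: neither of the two integrals $\int f(s)\,ds_j\,w_\perp\,S_R^{-1}(s,q)$ and $\int f(s)\,ds_j\,[\bar s, w_\perp]\,Q_s(q)^{-1}$ vanishes on its own, and only after applying the conjugation rule $c\,w_\perp = w_\perp\,\bar c$ together with the orientation-reversing change of variable $s \mapsto \bar s$ do they recombine, via a second invocation of the right Cauchy formula, into the desired quantity $2\pi\,w_\perp f(q)$.
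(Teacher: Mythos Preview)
Your proof is correct and follows essentially the same strategy as the standard argument the paper defers to (the quaternionic case in \cite{acgs}): decompose with respect to commutation/anti-commutation with the fixed imaginary unit $j$, reduce each piece to the right slice hyperholomorphic Cauchy formula, and for the anti-commuting piece use the conjugation rule $c\,w_\perp = w_\perp\,\bar c$ together with the orientation-reversing substitution $s\mapsto\bar s$ (valid because $U$ is axially symmetric and $f$ is intrinsic). The only cosmetic difference is that you carry out the splitting at the vector level on $w=B\eta$ rather than at the operator level on $B$ itself; the two are equivalent once you observe that, with the convention $(Ta)(v)=T(v)a$ and $(aT)(v)=aT(v)$ in $\boundOP(V_n)$, applying the integrand to $\eta$ yields exactly your scalar--vector identity.
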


\begin{theorem}\label{ProdRule}
Let $\hat{T}\in\boundOP(V_n)$.

(I) {\rm (The product rule)}.
Let   $f\in\mathcal{N}(\sigma_S(\hat{T}))$ and $g\in  \mathcal{SM}_{L}(\sigma_S(\hat{T}))$ or let $f\in\mathcal{SH}_{R}(\sigma_S(\hat{T}))$ and $g\in\mathcal{N}(\sigma_S(\hat{T}))$.  Then
$$
(f g)(\hat{T})=f(\hat{T})g(\hat{T}).
$$

(II)
Let $f\in\mathcal{N}(\sigma_S(\hat{T}))$. If $f^{-1}\in\mathcal{N}(\sigma_S(\hat{T}))$, then $f(\hat{T})$ is invertible and $f(\hat{T})^{-1} = f^{-1}(\hat{T})$.
\end{theorem}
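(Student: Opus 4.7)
The plan is to mirror the paravector proof from \cite{acgs}, reducing the product $f(\hat{T})g(\hat{T})$ to a single $S$-functional calculus integral via the $S$-resolvent equation (Theorem \ref{SREQSSS}) together with Lemma \ref{HelpProdRule}. For the first subcase of Part~(I), take $f\in\mathcal{N}(\sigma_S(\hat{T}))$ and $g\in\mathcal{SM}_L(\sigma_S(\hat{T}))$. I would choose two bounded slice Cauchy domains $U_1$, $U_2$ with $\sigma_S(\hat{T})\subset U_1$, $\overline{U_1}\subset U_2$ and $\overline{U_2}\subset\dom(f)\cap\dom(g)$, and fix any $\mathrm{J}\in\mathbb{S}$. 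Since $f$ is intrinsic, Theorem \ref{SCalcBimon} allows us to represent $f(\hat{T})$ through the right $S$-resolvent on $\partial(U_2\cap\mathbb{C}_\mathrm{J})$, while $g(\hat{T})$ is given by the left $S$-resolvent on $\partial(U_1\cap\mathbb{C}_\mathrm{J})$. Multiplying and applying Fubini yields a double integral with integrand
$$
f(s)\,ds_\mathrm{J}\,S_R^{-1}(s,\hat{T})\,S_L^{-1}(q,\hat{T})\,dq_\mathrm{J}\,g(q).
$$

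Next I would insert \eqref{SREQ1SSS} to rewrite the middle operator product and split the four resulting summands into two groups, one carrying $S_L^{-1}(q,\hat{T})$ and the other carrying $S_R^{-1}(s,\hat{T})$. The $S_L$-group has exactly the shape $(\overline{s}B-Bq)\mathcal{Q}_s(q)^{-1}$ with $B = S_L^{-1}(q,\hat{T})$ (constant with respect to the inner variable $s$), so Lemma \ref{HelpProdRule} collapses the inner $s$-integral to $S_L^{-1}(q,\hat{T})\,f(q)$. Because $f$ is intrinsic and $q\in\mathbb{C}_\mathrm{J}$ on the outer contour, $f(q)\in\mathbb{C}_\mathrm{J}$ commutes with $dq_\mathrm{J}$, so the remaining $q$-integral becomes
$$
\frac{1}{2\pi}\int_{\partial(U_1\cap\mathbb{C}_\mathrm{J})} S_L^{-1}(q,\hat{T})\,dq_\mathrm{J}\,f(q)g(q) = (fg)(\hat{T}).
$$
For the $S_R$-group, I would keep the natural order and evaluate the inner $q$-integral first. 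For each fixed $s\in\partial(U_2\cap\mathbb{C}_\mathrm{J})$ one has $s\notin\overline{U_1}$, and axial symmetry of $\overline{U_1}$ forces $[s]\cap\overline{U_1} = \emptyset$; hence $\mathcal{Q}_s(q)^{-1}$ is intrinsic and slice monogenic in $q$ on a neighborhood of $\overline{U_1}$, so the integrands $q\,\mathcal{Q}_s(q)^{-1}g(q)$ and $\mathcal{Q}_s(q)^{-1}g(q)$ are left slice monogenic there. Cauchy's integral theorem annihilates both inner $q$-integrals, eliminating the $S_R$-group entirely.

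The second subcase of Part~(I), with $f\in\mathcal{SM}_R(\sigma_S(\hat{T}))$ and $g\in\mathcal{N}(\sigma_S(\hat{T}))$, follows from a symmetric argument that represents $g(\hat{T})$ through the left $S$-resolvent via Theorem \ref{SCalcBimon} and uses the companion form \eqref{SREQ2SSS} of the $S$-resolvent equation together with the right-sided analogue of Lemma \ref{HelpProdRule}. Part~(II) is then immediate: both $f$ and $f^{-1}$ lie in $\mathcal{N}(\sigma_S(\hat{T}))$, so Part~(I) applies in either order and yields $f(\hat{T})f^{-1}(\hat{T}) = (f\cdot f^{-1})(\hat{T}) = \id = (f^{-1}\cdot f)(\hat{T}) = f^{-1}(\hat{T})f(\hat{T})$, where the identification $1(\hat{T}) = \id$ is the $m=0$ case of Lemma \ref{PolyT}. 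The main obstacle is the noncommutative bookkeeping of the four-term expansion: the scalar factor $\mathcal{Q}_s(q)^{-1}$ is real and hence central, but the operator $S$-resolvents do not commute with $\overline{s}$, $q$, or with each other, so the regrouping and the invocation of Lemma \ref{HelpProdRule} must be carried out before any attempt to move operators past these scalar factors, and the intrinsic hypothesis on $f$ is essential precisely to allow $f(q)$ to be pushed through $dq_\mathrm{J}$ in the final step.
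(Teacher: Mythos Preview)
Your proposal is correct and follows essentially the same route as the paper: both use the nested-domain setup $\overline{U_q}\subset U_s$, represent $f(\hat{T})$ via the right $S$-resolvent and $g(\hat{T})$ via the left $S$-resolvent, expand with the $S$-resolvent equation \eqref{SREQ1SSS}, kill the two $S_R^{-1}$-terms by Cauchy's theorem on the inner contour, and collapse the $S_L^{-1}$-terms via Lemma~\ref{HelpProdRule} after a Fubini swap; Part~(II) is then derived from Part~(I) exactly as you indicate. Your explicit observation that $f(q)\in\mathbb{C}_\mathrm{J}$ commutes with $dq_\mathrm{J}$ is a detail the paper leaves implicit, but otherwise the arguments coincide.
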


\begin{proof}

(I) {\rm The product rule}.
In the following we check the key facts to show that replacing the paravector operator $T$ by the full Clifford operator $\hat{T}$ the proof remain valid.
Let $f\in\mathcal{N}(\sigma_{S}(\hat{T}))$, let $g\in\mathcal{SM}_{L}(\sigma_{S}(\hat{T}))$ and let $U_q$ and $U_s$ be bounded slice Cauchy domains that contain $\sigma_{S}(\hat{T})$ such that
$\overline{U_q}\subset U_s $ and  $\overline{U_s}\subset \dom(f)\cap\dom(g)$. The subscripts $q$ and $s$ refer to  the respective variables of integration in the next computation. Let $j\in\mathbb{S}$, $\Gamma_s := \partial(U_s \cap\cc_j)$ and $\Gamma_q := \partial(U_q\cap\cc_j)$.   Theorem~\ref{SCalcBimon} allows to express $f(\hat{T})$ using both the left and the right $S$-resolvent operator as:
\[
\begin{split}
f(\hat{T}) g(\hat{T})=&
\frac{1}{2\pi } \int_{\Gamma_s} \, f(s)\,ds_j \,S_R^{-1} (s,\hat{T}) \frac{1}{2\pi }\int_{\Gamma_q} \,S_L^{-1} (q,\hat{T})\,dq_j \, g(q).
\end{split}
\]
Using the $S$-resolvent equation we get
\begin{align*}
f(\hat{T})g(\hat{T})=&\frac{1}{(2\pi)^2 }\int_{\Gamma_s} f(s)\,ds_j \int_{\Gamma_q} S_R^{-1}(s,\hat{T})q\Q_{s}(q)^{-1}\,dq_j\, g(q) \\
& -\frac{1}{(2\pi)^2 }\int_{\Gamma_s} f(s)\,ds_j \int_{\Gamma_q}S_L^{-1}(q,\hat{T})q\Q_{s}(q)^{-1}\,dq_j\, g(q)\\
&-\frac{1}{(2\pi)^2 }\int_{\Gamma_s} f(s)\,ds_j \int_{\Gamma_q}\overline{s}S_R^{-1}(s,\hat{T})\Q_{s}(q)^{-1} \,dq_j\, g(q)\\
&+\frac{1}{(2\pi)^2 }\int_{\Gamma_s} f(s)\,ds_j \int_{\Gamma_q}\overline{s}S_L^{-1}(q,\hat{T})\Q_{s}(q)^{-1}\, dq_j\, g(q),
 \end{align*}
 where $\Q_{s}(q)^{-1}:=(q^2-2\Re(s)q+|s|^2)^{-1}$.
Then we have
\begin{align*}
&\frac{1}{(2\pi)^2 }\int_{\Gamma_s} f(s)\,ds_j \int_{\Gamma_q}S_R^{-1}(s,\hat{T})q\Q_{s}(q)^{-1}dq_j\, g(q)\\
=&\frac{1}{(2\pi)^2 }\int_{\Gamma_s} f(s)\,ds_j\,S_R^{-1}(s,\hat{T})\left[\int_{\Gamma_q}
q\Q_{s}(q)^{-1}\,dq_j\, g(q)\right]=0
\end{align*}
and
\[
\begin{split}
&-\frac{1}{(2\pi)^2 }\int_{\Gamma_s} f(s)\,ds_j
\left[\int_{\Gamma_q}\overline{s}\,S_R^{-1}(s,\hat{T})\Q_{s}(q)^{-1}
 dq_j\, g(q)\right]
\\
=
&
-\frac{1}{(2\pi)^2 }\int_{\Gamma_s} f(s)\,ds_j\,
\overline{s}\,S_R^{-1}(s,\hat{T})
\left[\int_{\Gamma_q} \Q_{s}(q)^{-1} \, dq_j\, g(q)\right]= 0
\end{split}
\]
by Cauchy's integral theorem since the functions $\Q_{s}(q)^{-1}$ and $q\Q_{s}(q)^{-1}$ are for any $s\in \Gamma_s$ right slice monogenic on an open set that contains $\overline{U_q}$ since $\overline{U_q}\subset U_s$.
So we can write
\begin{align*}
f(\hat{T}) g(\hat{T})
&=-\frac{1}{(2\pi)^2 }\int_{\Gamma_s} f(s)\,ds_j \int_{\Gamma_q}S_L^{-1}(q,\hat{T})q\Q_{s}(q)^{-1}\,dq_j\, g(q)
\\
&+\frac{1}{(2\pi)^2 }\int_{\Gamma_s} f(s)\,ds_j\int_{\Gamma_q}\overline{s}S_L^{-1}(q,\hat{T})\Q_{s}(q)^{-1}\, dq_j\, g(q)
 \\
 &=\frac{1}{(2\pi)^2 }\int_{\Gamma_s} \int_{\Gamma_q} f(s)\,ds_j\left[\overline{s}S_L^{-1}(q,\hat{T})-S_L^{-1}(q,\hat{T})q\right]\Q_{s}(q)^{-1}dq_j\, g(q).
\end{align*}
The latter integrand is continuous and hence bounded on $\Gamma_s\times\Gamma_q$. Using Fubini's theorem to change the order of integration we obtain
\begin{align*}
f(\hat{T})g(\hat{T})
=\frac{1}{(2\pi)^2 }\int_{\Gamma_q}\left[\int_{\Gamma_s} f(s)\,ds_j\,[\overline{s}S_L^{-1}(q,\hat{T})-S_L^{-1}(q,\hat{T})q] \Q_{s}(q)^{-1}\right]\,dq_j\, g(q).
\end{align*}
By Lemma \ref{HelpProdRule} with $B = S_L^{-1}(q,\hat{T})$, we get
$$
 f(\hat{T}) g(\hat{T})=\frac{1}{2\pi } \int_{\Gamma_q}S_L^{-1}(q,\hat{T})\,dq_j\, f(q) g(q) = (fg)(\hat{T}).
 $$

The product rule for the $S$-functional calculus for right-slice hyperholomorphic functions can be shown with similar computations.

\medskip
Point (II).
From the product rule in point (I), we deduce that
\[
\id = 1(\hat{T}) = \left(ff^{-1}\right)(\hat{T}) = f(\hat{T})f^{-1}(\hat{T})
\]
if we consider $f$ and $f^{-1}$ as left-slice monogenic functions and
\[
\id = 1(\hat{T}) = \left(f^{-1}f\right)(\hat{T}) = f^{-1}(\hat{T}) f(\hat{T})
\]
if we consider them as right-slice monogenic functions. We conclude that $f(\hat{T})$ is invertible with $f^{-1}(\hat{T}) = f(\hat{T})^{-1}$.

\end{proof}

The $S$-functional calculus for Clifford operators $\hat{T}$  defines the Clifford-Riesz projectors. Thus we can identify invariant subspaces of $\hat{T}$ that are associated with sets of spectral values.
\begin{theorem}[Clifford-Riesz projectors]\label{RieszProj}
Let $\hat{T}\in\boundOP(V_n)$ and assume that $\sigma_S(\hat{T})= \sigma_{1}\cup \sigma_{2}$ with
$$
\dist( \sigma_{1},\sigma_{2})>0.
$$
We choose an open axially symmetric set $O$ with $\sigma_1\subset O$ and $\overline{O}\cap \sigma_2 = \emptyset$ and define $\chi_{\sigma_1}(s) = 1$ for $s\in O$ and $\chi_{\sigma_2}(s) = 0$ for $s\notin O$. Then $\chi_{\sigma_1}\in\mathcal{N}(\sigma_S(T))$ and
\[
P_{\sigma_1}: = \chi_{\sigma_1}(\hat{T}) = \frac{1}{2\pi}\int_{\partial(O\cap\cc_{j}) } S_{L}^{-1}(s,T)\,ds_j
\]
is a continuous projection that commutes with $\hat{T}$. Hence, $P_{\sigma_1}V_n $ is a right linear subspace of $V_n$ that is invariant under $\hat{T}$.
\end{theorem}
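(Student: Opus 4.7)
The plan is to verify four items in order: $\chi_{\sigma_1}\in\mathcal{N}(\sigma_S(\hat T))$, the stated integral formula, the projection identity $P_{\sigma_1}^2=P_{\sigma_1}$, and the commutation $\hat T P_{\sigma_1}=P_{\sigma_1}\hat T$, from which invariance is immediate.

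First I would build the domain on which $\chi_{\sigma_1}$ lives. Since $\sigma_S(\hat T)$ is compact (Theorem~\ref{PropSpec}) and axially symmetric (Proposition~\ref{AxSymSpec}), and since $\dist(\sigma_1,\sigma_2)>0$ together with $\overline O\cap\sigma_2=\emptyset$, I can pick an axially symmetric open set $O'$ with $\sigma_2\subset O'$ and $\overline{O'}\cap\overline{O}=\emptyset$ (take an axially symmetric $\varepsilon$-neighborhood of $\sigma_2$ with $\varepsilon$ smaller than the distance to $\overline O$). On the axially symmetric open set $U_0:=O\cup O'$, the extension of $\chi_{\sigma_1}$ by $0$ on $O'$ is locally constant and real-valued. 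Writing $\chi_{\sigma_1}(u+\mathrm{J}v)=f_0(u,v)+\mathrm{J}f_1(u,v)$ with $f_0\equiv\chi_{\sigma_1}$ and $f_1\equiv 0$, the compatibility conditions \eqref{CCondmonVEC} and the Cauchy-Riemann system \eqref{CRMMON} hold trivially, and $f_0$ is real-valued. Hence $\chi_{\sigma_1}\in\mathcal{N}(U_0)\subset\mathcal{N}(\sigma_S(\hat T))$ in the sense of Definition~\ref{def49}.

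For the integral formula, pick a bounded slice Cauchy domain $U=U_1\cup U_2$ with $U_1\cup U_2$ disjoint, $\sigma_1\subset U_1\subset\overline{U_1}\subset O$ and $\sigma_2\subset U_2\subset\overline{U_2}\subset O'$. Applying Definition~\ref{SCalc2} and using $\chi_{\sigma_1}\equiv 1$ on $U_1$ and $\chi_{\sigma_1}\equiv 0$ on $U_2$ gives
\[
P_{\sigma_1}=\frac{1}{2\pi}\int_{\partial(U_1\cap\cc_j)}S_L^{-1}(s,\hat T)\,ds_j,
\]
and by independence of the slice Cauchy domain (Theorem~\ref{OperatorInts}) $U_1$ can be replaced by any slice Cauchy domain sitting between $\sigma_1$ and $O$, in particular by $O$ if $O$ is itself a slice Cauchy domain (otherwise shrink inside $O$).

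For the projection identity, note that $\chi_{\sigma_1}^2=\chi_{\sigma_1}$ on $U_0$, so the product rule (Theorem~\ref{ProdRule}) applied with $f=g=\chi_{\sigma_1}\in\mathcal{N}(\sigma_S(\hat T))$ yields
\[
P_{\sigma_1}^2=\chi_{\sigma_1}(\hat T)\,\chi_{\sigma_1}(\hat T)=(\chi_{\sigma_1}\chi_{\sigma_1})(\hat T)=\chi_{\sigma_1}(\hat T)=P_{\sigma_1}.
\]
Continuity of $P_{\sigma_1}$ is automatic, since it is a boundary integral of the continuous $\boundOP(V_n)$-valued function $s\mapsto S_L^{-1}(s,\hat T)$ on a compact contour. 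For the commutation, the monomial $p(s)=s$ is intrinsic slice monogenic on $\mathcal{W}_{\mathcal{F}}$ and satisfies $p(\hat T)=\hat T$ by Lemma~\ref{PolyT} with $m=1$. Since both $p$ and $\chi_{\sigma_1}$ are intrinsic and commute pointwise, the product rule gives
\[
\hat T\,P_{\sigma_1}=p(\hat T)\chi_{\sigma_1}(\hat T)=(p\chi_{\sigma_1})(\hat T)=(\chi_{\sigma_1}p)(\hat T)=\chi_{\sigma_1}(\hat T)p(\hat T)=P_{\sigma_1}\,\hat T.
\]
Finally, $P_{\sigma_1}V_n$ is a right $\mathbb{R}_n$-submodule because $P_{\sigma_1}\in\boundOP(V_n)$ is right-linear, and it is $\hat T$-invariant by the commutation just established: if $v=P_{\sigma_1}w$ then $\hat Tv=\hat TP_{\sigma_1}w=P_{\sigma_1}(\hat Tw)\in P_{\sigma_1}V_n$.

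The main obstacle is the first step, namely producing an axially symmetric open neighborhood of $\sigma_S(\hat T)$ on which $\chi_{\sigma_1}$ is honestly intrinsic slice monogenic; once this is in place every remaining claim reduces to the intrinsic product rule and the integral independence theorem. A minor caveat is that one must ensure $O$ (or a slight shrinking of it) is a slice Cauchy domain so that the boundary integral makes sense; this is harmless because we only need the axially symmetric set $O$ to contain a slice Cauchy domain that isolates $\sigma_1$ from $\sigma_2$.
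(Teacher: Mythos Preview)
Your proof is correct and follows essentially the same approach as the paper: both use the product rule (Theorem~\ref{ProdRule}) applied to $\chi_{\sigma_1}^2=\chi_{\sigma_1}$ for idempotency and to $s\chi_{\sigma_1}=\chi_{\sigma_1}s$ (with $s(\hat T)=\hat T$) for the commutation, then read off invariance. Your write-up is in fact more detailed than the paper's, which simply asserts $\chi_{\sigma_1}\in\mathcal{N}(\sigma_S(\hat T))$ without constructing $O'$ and $U_0$, and does not separately discuss the integral formula or the slice Cauchy domain caveat for $O$.
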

\begin{proof}
Since the function $\chi_{\sigma_{1}}$ belongs to $\mathcal{N}(\sigma_{S}(T))$ by Theorem~\ref{ProdRule}, we have
\[
P_{\sigma_1}^2 = \chi_{\sigma_1}(\hat{T})\chi_{\sigma_1}(\hat{T}) = (\chi_{\sigma_1}\chi_{\sigma_1})(\hat{T}) = \chi_{\sigma_1}(\hat{T}) = P_{\sigma_1},
\]
and $P_{\sigma_1}$ is a projection in $\boundOP(V_n)$. Since it is right-linear, its range $P_{\sigma_1}V_n$ is a closed right linear subspace of $V_n$. Moreover, we have
\[
\hat{T}P_{\sigma_1} = s(\hat{T}) \chi_{\sigma_1}(\hat{T}) = (s\chi_{\sigma_1})(\hat{T})  = (\chi_{\sigma_1}s)(\hat{T}) = \chi_{\sigma_1}(\hat{T}) s(\hat{T}) = P_{\sigma_1}\hat{T}.
\]
Thus
\[
\hat{T}v = \hat{T}P_{\sigma_1}v = P_{\sigma_1}\hat{T}v\qquad \text{for all }v \in P_{\sigma_1}V_n
\]
hence $P_{\sigma_1}V_n$ is invariant under $\hat{T}$.

\end{proof}

The spectral mapping theorem does not hold for arbitrary slice monogenic
functions, but only for those slice monogenic functions that preserve the fundamental geometric property of the $S$-spectrum, namely its axially symmetry (see Proposition \ref{AxSymSpec}). So we are limited to intrinsic slice monogenic functions that are the only functions that preserve this property.

\begin{theorem}[The Spectral Mapping Theorem]\label{SpectralMapping}
Let  $\hat{T}\in\boundOP(V_n)$ and let $f\in\mathcal{N}(\sigma_S(\hat{T}))$. Then
\[
\sigma_S(f(\hat{T})) = f(\sigma_S(\hat{T})) = \{f(s):s\in\sigma_S(\hat{T})\}.
\]
\end{theorem}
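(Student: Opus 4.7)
The plan is to prove the two inclusions $f(\sigma_S(\hat{T})) \subseteq \sigma_S(f(\hat{T}))$ and $\sigma_S(f(\hat{T})) \subseteq f(\sigma_S(\hat{T}))$ separately, adapting the classical Riesz-Dunford argument to the present setting. The main algebraic tool is the product rule of Theorem~\ref{ProdRule}(I), together with the fact that intrinsic slice monogenic functions form a commutative subalgebra on which the $S$-functional calculus is a homomorphism. The hypothesis $f\in\mathcal{N}(\sigma_S(\hat{T}))$ is essential, both because it guarantees that $f$ preserves axial symmetry (so that spheres are mapped to spheres) and because it allows us to stay inside $\mathcal{N}(\sigma_S(\hat{T}))$ when forming expressions such as $f(s)^2-2\Re(r)f(s)+|r|^2$ for $r\in\mathbb{R}^{n+1}$.

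For the inclusion $f(\sigma_S(\hat{T}))\subseteq\sigma_S(f(\hat{T}))$, I would start from $p\in\sigma_S(\hat{T})$, set $r:=f(p)$, and introduce the intrinsic function
\[
g(s) := f(s)^2 - 2\Re(r)f(s) + |r|^2.
\]
Applying Lemma~\ref{squadequat} to $r=f(p)$ gives $g(p)=0$, and since the zero set of an intrinsic slice monogenic function is axially symmetric, the whole sphere $[p]$ lies in this zero set. The standard spherical-zero factorization then yields an intrinsic $h$ with
\[
g(s) = h(s)\,\Q_p(s) = \Q_p(s)\,h(s).
\]
Two applications of the product rule, combined with $g(\hat{T})=f(\hat{T})^2-2\Re(r)f(\hat{T})+|r|^2\id=\Q_r(f(\hat{T}))$, produce
\[
\Q_r(f(\hat{T})) = h(\hat{T})\,\Q_p(\hat{T}) = \Q_p(\hat{T})\,h(\hat{T}).
\]
If $\Q_r(f(\hat{T}))$ were invertible, these two identities would supply both a left and a right inverse for $\Q_p(\hat{T})$, making $\Q_p(\hat{T})$ invertible and contradicting $p\in\sigma_S(\hat{T})$. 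Hence $r=f(p)\in\sigma_S(f(\hat{T}))$.

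For the reverse inclusion, given $r\notin f(\sigma_S(\hat{T}))$, the idea is to construct an explicit inverse of $\Q_r(f(\hat{T}))$ from within the calculus. Since $f$ is intrinsic it sends the sphere $[s]$ into $[f(s)]$, so $f(\sigma_S(\hat{T}))$ is axially symmetric and disjoint from the sphere $[r]$; by compactness of $\sigma_S(\hat{T})$ and continuity of $f$ there is an axially symmetric open neighborhood $U\subseteq\dom(f)$ of $\sigma_S(\hat{T})$ on which $f(s)\notin[r]$, equivalently $\Q_r(f(s))\neq 0$. The function
\[
k(s) := \bigl(f(s)^2 - 2\Re(r)f(s) + |r|^2\bigr)^{-1}
\]
is then intrinsic slice monogenic on $U$, so $k\in\mathcal{N}(\sigma_S(\hat{T}))$, and the product rule yields
\[
k(\hat{T})\,\Q_r(f(\hat{T})) = \Q_r(f(\hat{T}))\,k(\hat{T}) = (k\cdot g)(\hat{T}) = 1(\hat{T}) = \id,
\]
so $\Q_r(f(\hat{T}))$ is invertible and $r\in\rho_S(f(\hat{T}))$.

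The main technical obstacle is the factorization step $g(s)=h(s)\Q_p(s)$ with $h$ intrinsic. Restricted to any slice $\mathbb{C}_{\mathrm{J}}$, $g$ becomes a holomorphic function of one complex variable vanishing at both $p_0+\mathrm{J}|\underline{p}|$ and $p_0-\mathrm{J}|\underline{p}|$, so ordinary complex division by $(z-p_0)^2+|\underline{p}|^2$ produces a holomorphic quotient on each slice, and the representation formula of Theorem~\ref{formulamon} glues these slice quotients into a single slice monogenic function $h$; the intrinsic property of $h$ is inherited from $g$ and $\Q_p$. This is a standard step in the paravector theory \cite{6CKG} and transfers to the present full Clifford setting without substantive change.
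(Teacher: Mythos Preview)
Your proposal is correct and follows essentially the same route as the paper's proof: for the inclusion $f(\sigma_S(\hat{T}))\subseteq\sigma_S(f(\hat{T}))$ both arguments reduce to the factorization $\Q_{f(p)}(f(q))=\Q_p(q)\,h(q)$ with $h$ intrinsic (the paper obtains $h$ by defining the quotient and removing the singularity, giving the explicit value $(\partial_S f(p))^2$ there, while you invoke the slice-wise division and the representation formula---the same content), and for the reverse inclusion both invert $\Q_r(f(\hat{T}))$ via the intrinsic function $(\Q_r\circ f)^{-1}$ and the product rule. Your version is in one respect a little cleaner, since you write the factorization on both sides and thereby make explicit that the putative inverse of $\Q_p(\hat{T})$ is two-sided.
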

\begin{proof}
Let $U$ be a bounded slice Cauchy domain as in Definition \ref{def49} and let $ s = u + j v\in\sigma_S(T)$. For $q\in U\setminus[ s ]$, we define
$$
\tilde{g}(q) = (q^2 - 2\Re( s )q + | s |^2)^{-1}(f(q)^2 - 2 \Re(f( s ))f(q) + |f( s )|^2).
$$
This function is the product of
\[
q\mapsto f(q)^2 - 2 \Re(f( s ))f(q) + |f( s )|^2,
\]
which is intrinsic and slice monogenic, with the rational intrinsic function $(q^2 - 2\Re(s)q + | s |^2)^{-1}$. So $\tilde g$ is an intrinsic slice monogenic function where it is defined, namely $\tilde g$ belongs to $\mathcal{N}(U)\setminus[ s ]$.

The function $\tilde{g}$ extends to the function $g\in\mathcal{N}(U)$ defined by
\[
 g(q) = \begin{cases} \tilde{g}(q)&\text{if } q\in U\setminus [ s ],\\ \left(\sderiv f( s )\right)^2 &\text{if }q= s
\end{cases}
\]
where $\sderiv f( s )$ is the slice derivative of $f$. Obviously, $g$ is an intrinsic slice function and $g_i = g|_{U\cap\cc_i}$ is holomorphic on $U\cap\mathbb{C}_i$ for any $i\in\mathbb{S}$, so the function $g$ is also slice monogenic.
The product rule implies
\[
 f(\hat{T})^2 - 2\Re(f( s )) f(\hat{T}) + |f( s )|^2\id = (\hat{T}^2 - 2\Re( s )\hat{T} + | s |^2\id)g(\hat{T}).
\]
If the operator $f(\hat{T})^2 - 2\Re(f( s )) f(\hat{T}) + |f( s )|^2\id$ was invertible, then
 $$
 g(\hat{T})(f(\hat{T})^2 - 2\Re(f( s )) f(\hat{T}) + |f( s )|^2\id)^{-1}
 $$
would therefore be the inverse of $\hat{T}^2 - 2\Re( s )\hat{T} + | s |^2\id$. But since we assumed $ s \in\sigma_S(\hat{T})$, this is impossible and hence $f( s )\in \sigma_S(f(\hat{T}))$. Thus
\[
f(\sigma_{S}(\hat{T}))\subseteq \sigma_{S}(f(\hat{T})).
\]
If $ s  \notin f(\sigma_S (\hat{T}))$, then we can consider the function
\[
 h(q) := (f^2(q) - 2\Re( s ) f(q) + | s |^2)^{-1}.
 \]
which is an intrinsic slice monogenic function. Its singularities are the spheres $[q]\subset U$ such that $f([q]) = [f(q)] = [s]$. But since we assumed $s\notin f(\sigma_{S}(T))$, $h$ does not have singularities on $\sigma_{S}(\hat{T})$, $h\in\mathcal{N}(\sigma_{S}(\hat{T}))$ and
Theorem \ref{ProdRule} point (II)
implies
\[
h(\hat{T}) = \left(f(\hat{T})^2 - 2\Re(s)f(\hat{T}) + |s|^2\right)^{-1}\in\boundOP(V_n).
\]
We deduce that $s\in\rho_{S}(\hat{T})$ and
\[
\sigma_{S}(f(\hat{T})) \subseteq f(\sigma_{S}(\hat{T})).
\]
\end{proof}

Now we show the spectral mapping theorem for the $S$-functional calculus of a Clifford operator. The next definition is standard:
\begin{definition}

Let $\hat{T}\in\boundOP(V_n)$. Then the $S$-spectral radius of $\hat{T}$ is defined to be the nonnegative real number
$$
r_S(\hat{T}) := \sup\{|s|: s\in\sigma_S(\hat{T})\}.
$$
\end{definition}
\begin{theorem}\label{GelfandSpecR}
For $\hat{T}\in\boundOP(V_n)$, we have
$$
r_S(\hat{T}) = \lim_{m\to + \infty}\|\hat{T}^m\|^{\frac{1}{m}}.
$$
\end{theorem}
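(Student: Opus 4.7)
The plan is to establish the Gelfand-type formula by bracketing $r_S(\hat T)$ between $\liminf_{m\to\infty}\|\hat T^m\|^{1/m}$ and $\limsup_{m\to\infty}\|\hat T^m\|^{1/m}$, mimicking the classical Banach algebra argument but using the $S$-functional calculus machinery already developed.

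For the lower bound, I would invoke the Spectral Mapping Theorem (Theorem \ref{SpectralMapping}) applied to the intrinsic slice monogenic function $f(q)=q^m$, giving $\sigma_S(\hat T^m)=\{s^m : s\in\sigma_S(\hat T)\}$. By the compactness part of Theorem \ref{PropSpec} applied to $\hat T^m$, every element of $\sigma_S(\hat T^m)$ has modulus at most $\|\hat T^m\|$. Hence for each $s\in\sigma_S(\hat T)$ one has $|s|^m=|s^m|\le\|\hat T^m\|$, so $|s|\le\|\hat T^m\|^{1/m}$. Taking the supremum over $s\in\sigma_S(\hat T)$ and then the $\liminf$ in $m$ yields
\[
r_S(\hat T)\le \liminf_{m\to\infty}\|\hat T^m\|^{1/m}.
\]

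For the upper bound, fix any $r>r_S(\hat T)$. Then $\overline{B_r(0)}\cap\mathcal W_{\mathcal F}\setminus B_r(0)\subset\rho_S(\hat T)$ in the sense that the sphere $\{|s|=r\}$ lies in $\rho_S(\hat T)$, and the ball $B_r(0)$ is a bounded slice Cauchy domain containing $\sigma_S(\hat T)$. Lemma \ref{PolyT} (with $U=B_r(0)$) then gives the integral representation
\[
\hat T^m=\frac{1}{2\pi}\int_{\partial(B_r(0)\cap\mathbb C_j)}S_L^{-1}(s,\hat T)\,ds_j\,s^m.
\]
Since $S_L^{-1}(\cdot,\hat T)$ is right slice hyperholomorphic on $\rho_S(\hat T)$ by Lemma \ref{ResolventRegularA}, it is continuous on the compact contour $\partial(B_r(0)\cap\mathbb C_j)$, so $M_r:=\sup_{|s|=r,\,s\in\mathbb C_j}\|S_L^{-1}(s,\hat T)\|<\infty$. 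The standard $ML$-estimate on an integral of length $2\pi r$ gives $\|\hat T^m\|\le M_r\, r^{m+1}$, hence
\[
\|\hat T^m\|^{1/m}\le r\,(M_r r)^{1/m}\xrightarrow[m\to\infty]{}r.
\]
Thus $\limsup_{m\to\infty}\|\hat T^m\|^{1/m}\le r$ for every $r>r_S(\hat T)$, which forces $\limsup_{m\to\infty}\|\hat T^m\|^{1/m}\le r_S(\hat T)$. Combining the two bounds shows the limit exists and equals $r_S(\hat T)$.

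I expect no serious obstacle: all the nontrivial ingredients (compactness of $\sigma_S(\hat T)$, the polynomial representation, right slice hyperholomorphicity of the $S$-resolvent, and the spectral mapping theorem for intrinsic functions) have already been established. The only mild technical point is verifying that $M_r$ is finite on the circle in the complex plane $\mathbb C_j$, which is immediate from continuity on a compact set, and noting that the integration by $ds_j$ contributes an absolute factor $1$ so the length estimate is indeed $2\pi r$.
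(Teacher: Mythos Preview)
Your proof is correct. The lower bound via the Spectral Mapping Theorem and the compactness result in Theorem~\ref{PropSpec} is exactly what the paper does.

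For the upper bound you take a different, slightly more elementary, route than the paper. The paper considers the composed map $q\mapsto S_L^{-1}(q^{-1},\hat T)$, notes it is slice monogenic on $B_{1/r_S(\hat T)}(0)$ with power series $\sum_m \hat T^m q^{m+1}$, and uses convergence of this series at points $q=s^{-1}$ with $|s|>r_S(\hat T)$ to infer that $\sup_m\|\hat T^m s^{-m-1}\|<\infty$, from which $\limsup_m\|\hat T^m\|^{1/m}\le |s|$ follows. You instead invoke Lemma~\ref{PolyT} to write $\hat T^m$ as a Cauchy-type integral over a circle of radius $r>r_S(\hat T)$ and apply an $ML$-estimate, obtaining $\|\hat T^m\|\le M_r r^{m+1}$ directly. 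Your argument is the classical Cauchy-estimate approach and avoids introducing the auxiliary function $S_L^{-1}(q^{-1},\hat T)$; the paper's route is closer to the ``radius of convergence equals reciprocal of spectral radius'' viewpoint. Both are standard and valid, and your version is arguably more direct given that Lemma~\ref{PolyT} is already available.
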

\begin{proof}
The function defined by $q \mapsto q^{-1}$ is evidently intrinsic and slice monogenic, so we can consider the composition function $q\mapsto S_L^{-1}(q^{-1},\hat{T})$ which is slice monogenic on the set
\[
U:=\linebreak[2]\{ q\in\mathbb{R}^{n+1}: q^{-1}\in\rho_{S}(\hat{T})\}.
\]
The set $U$ contains the ball $B_{1/r_S(T)}(0)$, in fact $\mathbb{R}^{n+1}\setminus B_{r_S(T)}(0)\subset \rho_{S}(\hat{T})$, and so the function $S_L^{-1}(q^{-1},\hat{T})$ admits a power series expansion at $0$ that converges on $B_{1/r_S(\hat{T})}(0)$:
\[
S_L^{-1}\left(q^{-1},\hat{T}\right) = \sum_{m=0}^{+\infty} \hat{T}^m q^{m+1}, \qquad |q| < \frac{1}{r_S(\hat{T})}.
\]
For $s$ such that $|s|>r_S(T)$, we have $\left\| \hat{T}^ms^{-m-1}\right\| \to 0$ as $m\to + \infty$ because the above series converges. In particular
\[
 C(s) = \sup_{m\in\mathbb{N}} \|\hat{T}^ms^{-m-1}\| < + \infty.
 \]
Therefore
\begin{align*}
\limsup_{m\to+\infty}\|\hat{T}^m\|^{\frac{1}{m}}\frac{1}{|s|}= \limsup_{m\to+\infty} \|\hat{T}^m\|^{\frac{1}{m}}|s|^{-\frac{m+1}{m}}
= \limsup_{m\to+\infty}\|\hat{T}^ms^{-m-1}\|^{\frac{1}{m}}
\leq \limsup_{m\to+\infty} C(s)^{\frac{1}{m}} = 1,
\end{align*}
and hence
$
\limsup_{m\to+\infty}\|\hat{T}^m\|^{\frac{1}{m}}\leq|s|.
$
 Since $s$ satisfies $|s|>r_S(\hat{T})$, we obtain
$$
\limsup_{m\to+\infty}\|\hat{T}^m\|^{\frac{1}{m}}\leq r_S(\hat{T}).
$$
Moreover, Theorem \ref{SpectralMapping} implies
$
\sigma_S(\hat{T}^m) = \sigma_S(\hat{T})^m,
$
so  Theorem~\ref{PropSpec} yields that
\begin{align*}
 r_S(\hat{T})^m = \sup\{|s|^m:s\in\sigma_S(T)\}
 = \sup\{|s|:s\in \sigma_S(\hat{T}^m)\} = r_S(\hat{T}^m)\leq \|\hat{T}^m\|
 \end{align*}
for any $n\in\mathbb{N}$. Therefore, we get
\begin{equation}\label{SRP}
r_S(\hat{T}) \leq\liminf_{m\to +\infty} \|\hat{T}^m\|^{\frac{1}{m}} \leq \limsup_{m\to +\infty}\|\hat{T}^m\|^{\frac{1}{m}}\leq r_S(\hat{T})
\end{equation}
and $r_S(\hat{T}) = \lim_{m\to\infty} \|\hat{T}^m\|^{\frac{1}{m}}$, where \eqref{SRP} also implies the existence of the limit.
\end{proof}

Given two slice monogenic functions, in general it is not possible to define their composition. In order to do, the second function has to be intrinsic.
The spectral mapping theorem allows to generalize the composition rule.
\begin{theorem}[Composition rule]\label{Composition}
Let $\hat{T}\in\boundOP(V_n)$ and let $f\in\mathcal{N}(\sigma_{S}(\hat{T}))$. If $g\in\mathcal{SM}_L(\sigma_{S}(f(\hat{T}))$ then $g\circ f \in \mathcal{SM}_L(\sigma_{S}(\hat{T}))$ and if $g\in\mathcal{SM}_R(f(\sigma_{S}(\hat{T})))$ then $g\circ f \in \mathcal{SM}_R(\sigma_{S}(\hat{T}))$. In both cases
\[
 g(f(\hat{T})) = (g\circ f) (\hat{T}).
 \]
\end{theorem}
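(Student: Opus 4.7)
The plan is to reduce $g(f(\hat T))$ to $(g\circ f)(\hat T)$ by exchanging two nested $S$-functional calculus integrals, the exchange being justified through the product rule (Theorem \ref{ProdRule}) and the Spectral Mapping Theorem (Theorem \ref{SpectralMapping}). First I would verify that $g\circ f$ belongs to $\mathcal{SM}_L(\sigma_{S}(\hat T))$: since $f$ is intrinsic, the composition $g\circ f$ is left slice monogenic, and Theorem \ref{SpectralMapping} gives $\sigma_S(f(\hat T))=f(\sigma_S(\hat T))$, so the hypothesis on $g$ combined with continuity of $f$ yields an axially symmetric open neighborhood of $\sigma_{S}(\hat T)$ on which $g\circ f$ is defined. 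I would then pick bounded slice Cauchy domains $U$ and $V$ with $\sigma_{S}(\hat T)\subset U$, $\overline U\subset\dom(g\circ f)$, $f(\overline U)\subset V$, $\sigma_{S}(f(\hat T))\subset V$, and $\overline V\subset\dom(g)$.

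The central algebraic identity to establish is
$$S_L^{-1}(p,f(\hat T)) = h_p(\hat T),\qquad h_p(q):=S_L^{-1}(p,f(q)),$$
valid for every $p\in\partial(V\cap\mathbb{C}_j)$. Writing
$$h_p(q) = -\bigl(f(q)^2-2\Re(p)f(q)+|p|^2\bigr)^{-1}\bigl(f(q)-\overline p\bigr),$$
the factor $\phi_p(q):=f(q)^2-2\Re(p)f(q)+|p|^2$ is intrinsic because $f$ is, and $\phi_p^{-1}$ is well-defined and intrinsic on a neighborhood of $\sigma_S(\hat T)$ since for $p\in\partial V$ the sphere $[p]$ is disjoint from $f(\sigma_S(\hat T))=\sigma_S(f(\hat T))$. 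The invertibility part of Theorem \ref{ProdRule} then yields $\phi_p^{-1}(\hat T)=\bigl(f(\hat T)^2-2\Re(p)f(\hat T)+|p|^2\id\bigr)^{-1}$, and a further application of the product rule to the left slice monogenic factor $f-\overline p$ produces $h_p(\hat T)=S_L^{-1}(p,f(\hat T))$.

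With this identity in hand I would substitute the Cauchy representation of $h_p(\hat T)$ on $\partial(U\cap\mathbb{C}_k)$ into the definition of $g(f(\hat T))$ on $\partial(V\cap\mathbb{C}_j)$, obtaining a double integral, swap the order of integration (legitimate because the integrand is continuous on the product of compact contours), and recognize the inner integral over $\partial(V\cap\mathbb{C}_j)$ as the Cauchy formula for $g$ at the point $f(s)\in V$. This collapses to $g(f(s))$, and the outer integral produces $(g\circ f)(\hat T)$. The right slice monogenic case is treated symmetrically using the right Cauchy formula and the right $S$-resolvent. The main obstacle is precisely the operator identity $S_L^{-1}(p,f(\hat T))=h_p(\hat T)$: its proof combines both halves of Theorem \ref{ProdRule} with the axial symmetry that comes from $f$ being intrinsic; once it is in place, the remainder is a Fubini-plus-Cauchy manipulation that parallels the paravector argument of \cite{acgs}.
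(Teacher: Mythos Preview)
Your proposal is correct and follows essentially the same route as the paper's proof: both establish the key operator identity $S_L^{-1}(p,f(\hat T))=h_p(\hat T)$ via the product rule and invertibility (Theorem~\ref{ProdRule}) together with the Spectral Mapping Theorem, then substitute it into the defining integral for $g(f(\hat T))$, apply Fubini, and collapse the inner integral by the Cauchy formula. The only cosmetic difference is that the paper uses the same imaginary unit $j$ on both contours, whereas you allow distinct units $j$ and $k$; this is immaterial since the calculus is independent of the choice of unit.
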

\begin{proof}
If $g\in\mathcal{SM}_L(f(\sigma_{S}(\hat{T})))$, then $\dom(g)$ can be chosen open and axially symmetric. Since $f$ is continuous and intrinsic, the inverse image of any open axially symmetric set under $f$ is again open and axially symmetric.  Thus $f^{-1}(\dom(g))$ is an axially symmetric open set containing $\sigma_{S}(\hat{T})$ as $f(\sigma_{S}(\hat{T})) = \sigma_{S}(f(\hat{T}))\subset \dom(g)$ by Theorem~\ref{SpectralMapping}.

Since $f\in\mathcal{N}$ the composition $g\circ f$ is a left slice monogenic function with domain $f^{-1}(\dom(g))$ and so $g\circ f\in\mathcal{SM}_L(\sigma_{S}(\hat{T}))$.

Let $U$ be a bounded slice Cauchy domain such that $\sigma_{S}(\hat{T})\subset U$ and $\overline{U}\subset \dom(f)$ and let $W$ be another bounded slice  Cauchy domain such that $\sigma_{S}(\hat{T})\subset \overline{f(U)} \subset W$ and $\overline{W}\subset \dom(g)$.  Since $f$ is intrinsic, the map $s\mapsto S_L^{-1}(q, f(s))$ is left slice monogenic on
\[
\{s\in\dom(f): f(s)\notin [q] \} = \{s\in\dom(f): q\notin[f(s)] \}.
\]
If $q\notin\sigma_S(f(\hat{T})) = f(\sigma_{S}(\hat{T}))$, then $s\mapsto S_L^{-1}(q,f(s))$ belongs to $\mathcal{SM}_L(\sigma_{S}(\hat{T}))$. By the properties of the $S$-functional calculus, we have
\begin{align*}
S_L^{-1}( q ,f(\hat{T})) =& -\Q_{q}(f(\hat{T}))^{-1}( f(\hat{T}) - \overline{ q }\id)
\\
=& \frac{1}{2\pi}\int_{\partial(U\cap\mathbb{C}_j)} S_L^{-1}(s,\hat{T})\,ds_j\,\left[ -\Q_{q}(f(s))^{-1}(f(s) - \overline{ q })\right]
\\
= &\frac{1}{2\pi}\int_{\partial(U\cap\mathbb{C}_j)}S_L^{-1}(s,\hat{T})\,ds_j\,S_L^{-1}( q ,f(s))
\end{align*}
with $\Q_{s}(f(s))^{-1} = (f(s)^2 - 2\Re(q)f(s) + |q|^2)^{-1}$ and an arbitrary $j\in\mathbb{S}$.
Therefore
\begin{align*}
g(f(\hat{T})) =& \frac{1}{2\pi}\int_{\partial(W\cap\mathbb{C}_j)}S_L^{-1}( q , f(\hat{T}))\, d q _j\,g( q )
\\
=& \frac{1}{2\pi}\int_{\partial(W\cap\mathbb{C}_j)}\left[\frac{1}{2\pi}
\int_{\partial(U\cap\mathbb{C}_j)}S_{L}^{-1}(s,\hat{T})\, ds_j \, S_L^{-1}( q , f(s))\right] \,d q _j\,g( q ).
\end{align*}
Since the latter integrand is continuous and hence bounded on the compact set $\partial(W\cap\mathbb{C}_j)\times \partial (U\cap \mathbb{C}_j)$, we can apply Fubini's theorem to change the order of integration and obtain
\begin{align*}
g(f(\hat{T}))
=& \frac{1}{2\pi}\int_{\partial(U \cap\mathbb{C}_j)}S_{L}^{-1}(s,\hat{T})\,ds_j\, \left[ \frac{1}{2\pi}\int_{\partial(W \cap\mathbb{C}_j)}S_L^{-1}( p , f(s))\,d p _j\,g( p )\right]
\\
=& \frac{1}{2\pi}\int_{\partial(U\cap\mathbb{C}_j)}S_{L}^{-1}(s,\hat{T})\,ds_j\,g(f(s))
\\
=& \frac{1}{2\pi}\int_{\partial(U\cap\mathbb{C}_j)}S_{L}^{-1}(s,\hat{T})\,ds_j\,(g\circ f)(s) = (g\circ f)(\hat{T}).
\end{align*}
\end{proof}

\section{Noncommuting matrix variables and some final remarks}
The results of the previous section  can be of interest for the community working in free analysis
and in free probability.
In fact, as a particular case,
we can consider $(n+1)$ noncommuting matrices. Precisely,
let $X_j\in\mathbb{R}^{d\times d}$, for $d\in \mathbb{N}$ and let us make the identification
$$
(X_0,X_1,...,X_n)\to \mathbf{X}=\sum_{j=0}^nX_je_j
$$
where $e_1,\ldots,e_n$ are generators of the Clifford algebra $\mathbb R_n$.
Then Theorem \ref{leftrightSSS} becomes:
\begin{theorem}
Let  $\mathbf{X}\in \mathbb{R}^{d\times d}\otimes\mathbb{R}_n$, where $\otimes$ denotes the algebraic tensor product, and $s\in \mathbb{R}^{n+1}$ with $\| \mathbf{X}\| < |s|$.

(I)
The left $S$-resolvent series equals
\[
\sum_{m=0}^{+\infty}\mathbf{X}^ms^{-m-1} = - (\mathbf{X}^2 - 2\Re (s)\mathbf{X} +|s|^2\id_{d\times d})^{-1}(\mathbf{X}-\overline{s}\id_{d\times d}).
\]

(II)
The right $S$-resolvent series equals
\[
 \sum_{m=0}^{+\infty}s^{-m-1}\mathbf{X}^m = - (\mathbf{X}-\overline{s}\id_{d\times d})(\mathbf{X}^2 - 2\Re(s)\mathbf{X} +|s|^2\id_{d\times d})^{-1}.
 \]
\end{theorem}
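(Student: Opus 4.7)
The plan is to recognize the statement as a direct specialization of Theorem \ref{leftrightSSS} to the concrete module of $d$-dimensional column vectors over the Clifford algebra. First I would set up the correspondence: take $\mathcal{F}=\mathbb{R}_n$ (which satisfies Assumption \ref{ASS} with the Clifford conjugation), let $V=\mathbb{R}^d$ and form the two-sided Banach module $\mathcal{Y}=V_n=V\otimes\mathbb{R}_n$ as in Section~4. The element $\mathbf{X}=\sum_{j=0}^n X_je_j\in\mathbb{R}^{d\times d}\otimes\mathbb{R}_n$ is a full Clifford operator in the sense of \eqref{fullyCliop}, and hence an element of $\boundOP(\mathcal{Y})$ acting on column vectors by the usual matrix-vector multiplication with Clifford coefficients. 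Under this identification $\id_{d\times d}$ becomes the identity $\id$ of $\boundOP(\mathcal{Y})$, the multiplication in $\mathbb{R}^{d\times d}\otimes\mathbb{R}_n$ agrees with composition of operators in $\boundOP(\mathcal{Y})$, and the norm hypothesis $\|\mathbf{X}\|<|s|$ becomes the operator-norm condition required in Theorem \ref{leftrightSSS}.

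Next, since $s\in\mathbb{R}^{n+1}$ is a paravector, I would invoke the identifications (I)--(IV) listed just before Lemma~\ref{PolyT}: the paravectors sit inside the weak cone $\mathcal{W}_{\mathcal{F}}$ of $\mathbb{R}_n$, and the notions of $\Re(s)$, $|s|$ and $\overline{s}$ used in the abstract framework coincide with the paravector versions appearing in the statement. With these identifications in place, the hypotheses of Theorem \ref{leftrightSSS} are satisfied for $A=\mathbf{X}$ and $s$ in the paravector slice of $\mathcal{W}_{\mathcal{F}}$.

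Finally, I would simply quote Theorem \ref{leftrightSSS}(I) and (II) (equivalently, Theorem \ref{LemmaQInverse}(III)--(IV) applied to $\hat{T}=\mathbf{X}$) to obtain the two formulas. The convergence of the series in operator norm is inherited from Lemma~\ref{Srespowrser}, and the equalities with $-(\mathbf{X}^2-2\Re(s)\mathbf{X}+|s|^2\id_{d\times d})^{-1}(\mathbf{X}-\overline{s}\id_{d\times d})$ on the left and $-(\mathbf{X}-\overline{s}\id_{d\times d})(\mathbf{X}^2-2\Re(s)\mathbf{X}+|s|^2\id_{d\times d})^{-1}$ on the right follow verbatim from the abstract proof, which uses only that $2\Re(s)$ and $|s|^2$ are real (hence commute with $\mathbf{X}$) together with Theorem~\ref{LemmaQInverseSSS} to ensure invertibility of $\Q_{s}(\mathbf{X})$.

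There is no genuine obstacle here; the only point requiring care is bookkeeping, namely checking that the operator-theoretic identities in $\boundOP(V_n)$ specialize to the matrix identities in $\mathbb{R}^{d\times d}\otimes\mathbb{R}_n$ once $\id$ is replaced by $\id_{d\times d}$. This is immediate from the fact that, in this finite-dimensional setting, composition of operators on $V_n$ and matrix multiplication in $\mathbb{R}^{d\times d}\otimes\mathbb{R}_n$ are literally the same operation.
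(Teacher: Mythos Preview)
Your proposal is correct and matches the paper's approach exactly: the paper presents this theorem without proof, introducing it with the phrase ``Then Theorem~\ref{leftrightSSS} becomes:'', so it is treated as an immediate specialization of the abstract result to $\mathcal{Y}=V_n$ with $V=\mathbb{R}^d$ and $\hat{T}=\mathbf{X}$. Your careful setup of the identifications is more detailed than what the paper writes, but the underlying argument is the same.
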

Then the $S$-spectrum of the noncommuting matrices $(X_0,X_1,...,X_n)$ is defined as:
\begin{definition} Let $\mathbf{X}=\sum_jX_je_j\in \mathbb{R}^{d\times d}\otimes\mathbb{R}_n$ and take $s\,\in\mathbb{R}^{n+1}$.
We define the $S$-spectrum of the  $\mathbf{X}\in \mathbb{R}^{d\times d}\otimes\mathbb{R}_n$ as
$$
\sigma_S(\mathbf{X})=\{s\in \mathbb{R}^{n+1} \ :\  \mathbf{X}^2-2\Re(s)\mathbf{X} +|s|^2\mathcal{I}_{d\times d} \ \ {\rm is \  not \ invertible \  in} \ \mathbb{R}^{d\times d}\otimes\mathbb{R}_n\}
$$
and the $S$-resolvent set as
$$
\rho_S(\mathbf{X})=\mathbb{R}^{n+1}\setminus \sigma_S(\mathbf{X}).
$$
\end{definition}
The resolvent operators are:
\begin{definition}[The $S$-resolvent operators]
Let $\mathbf{X}\in \mathbb{C}^{d\times d}\otimes\mathbb{R}_n$ . For $s\in\rho_S(\mathbf{X})$, we define the {\em left $S$-resolvent operator} as
$$
 S_L^{-1}(s,\mathbf{X}) = - (\mathbf{X}^2 - 2\Re (s)\mathbf{X} +|s|^2\id_{d\times d})^{-1}(\mathbf{X}-\overline{s}\id_{d\times d}),
 $$
and the {\em right $S$-resolvent operator} as
$$
S_R^{-1}(s,\mathbf{X}) = - (\mathbf{X}-\overline{s}\id_{d\times d})(\mathbf{X}^2 - 2\Re(s)\mathbf{X} +|s|^2\id_{d\times d})^{-1}.
$$
\end{definition}

Via the $S$-functional calculus we can define
hyperholomorphic functions of the noncommuting matrices $\mathbf{X}$.
In particular the case of intrinsic functions contains all
special functions that have a power series expansion, e.g., the exponential, sine, cosine, Bessel, more in general hypergeometric functions to name a few.

We conclude this section with some connections of the spectral theory on the $S$-spectrum and other research fields. We mention below some directions in which the $S$-functional calculus has been developed. We wish to stress that the main conclusions in this paper allow one to abstract any result on the $S$-functional calculus below to a setting where one may embed an $n$-tuple $(T_1, \ldots, T_n)$ of bounded operators acting on a Banach real space into an operator acting on a Banach module over $\mathbb{R}_m$, with $m \geq \frac{\log(n)}{\log(2)}$, or more generally on any Banach module over a left complex structure (LSCS) algebra with dimension greater than or equal to $n$ (and not just as a paravector operator).
Note that the choice of the embedding is  highly non-canonical and may be purpose driven.

\begin{remark}
First of all we would like to mention that, in the quaternionic case, there have been important developments in Schur analysis
in the slice hyperholomorphic setting. The material is organized in the book \cite{6ACSBOOK} and in the references therein. The quaternionic $S$-functional calculus is better developed and full treated in \cite{6CKG,6css}.
 Fractional powers of vector operators and applications have been largely investigated in the papers
 \cite{frac4,frac5,frac3} and in the book \cite{6CG}.
  The $H^\infty$-functional calculus was further extended  in \cite{6MILANO} following the book \cite{Haase}.
 For more recent developments associated with quaternionic quantum mechanics see
\cite{santar2,santar3}.
\end{remark}

\begin{remark}
In the Clifford algebra setting, the theory is based on slice monogenic functions which were developed in the book \cite{6css} starting from the paper
\cite{6cssisrael},  and the
 $S$-functional calculus for paravector operators was originally developed in \cite{CSSFUNCANAL}.
More advances in spectral theory in the Clifford setting can be found in \cite{6hinfty} in which the $H^\infty$-functional calculus has been extended to this setting.
\end{remark}

\begin{remark}
 The $F$-functional calculus \cite{6CKG,FCALSOM} is a link between the spectral theory on the $S$-spectrum and the monogenic spectral theory \cite{6MQ,6JM,6JMP,6MP,6qian1}, see also the books \cite{jefferies,BOOKTAO}.
The seminal paper on the $H^\infty$-functional calculus is
\cite{McIntosh:1986}.
The $F$-functional calculus is based
    on the Fueter-Sce-Qian mapping theorem, a natural relation between slice monogenic and monogenic functions, see the books \cite{bookSCE,BOOKTAO}. Another link between slice monogenic functions and the classical monogenic functions can be found in \cite{6Radon} using the method of the Radon transform.

\end{remark}

\end{document}